\newcommand{\mg}{\mathfrak g }
\newcommand{\mm}{\mathfrak m }
\newcommand{\mmu}{\mathfrak u }
\newcommand{\mz}{\mathfrak z }
\newcommand{\mk}{\mathfrak k }
\newcommand{\mv}{\mathfrak v }
\newcommand{\mh}{\mathfrak h }
\newcommand{\tmg}{\tilde{\mathfrak g}}
\newcommand{\so}{\mathfrak{so} }
\renewcommand{\sl}{\mathfrak{sl} }
\newcommand{\gl}{\mathfrak{gl} }
\newcommand{\lela}{ g(}
\newcommand{\rira}{)}
\newcommand{\bs}{\backslash}
\newcommand{\GL}{\operatorname{GL}}
\newcommand{\SL}{\operatorname{SL}}
\newcommand{\R}{\mathbb R}
\newcommand{\Q}{\mathbb Q}
\newcommand{\N}{\mathbb N}
\newcommand{\Z}{\mathbb Z}
\newcommand{\C}{\mathbb C}
\newcommand{\spa}{\mathrm{span}}
\DeclareMathOperator{\Aut}{Aut}
\DeclareMathOperator{\ad}{ad}
\DeclareMathOperator{\tr}{tr}
\numberwithin{equation}{section}
 \newtheorem{teo}{Theorem}[section]
 \newtheorem{pro}[teo]{Proposition}
 \newtheorem{cor}[teo]{Corollary}
 \newtheorem{lm}[teo]{Lemma}
 \newtheorem{defi}[teo]{Definition}
 \theoremstyle{definition}
 \newtheorem{ex}[teo]{Example}
 \newtheorem{remark}[teo]{Remark}
\newcommand{\nc}{\newcommand}
\nc{\Iso}{\operatorname{Iso}}
\nc{\Id}{\operatorname{Id}}
 \nc{\iso}{\mathfrak{iso}}
 \nc{\sso}{\mathfrak{so}}
\nc{\Ad}{\operatorname{Ad}} 
\nc{\Sym}{\mathrm{Sym}}
 \nc{\pr}{\operatorname{pr}} 
 \nc{\Dera}{\operatorname{Dera}} 
 \nc{\Auto}{\operatorname{Auto}}
 \nc{\noi}{\noindent}
\title{Locally conformally product structures on solvmanifolds}
\author{Adrián Andrada}
\address{A. Andrada: FAMAF, Universidad Nacional de C\'ordoba and CIEM-CONICET, Av. Medina Allende s/n, Ciudad Universitaria, X5000HUA C\'ordoba, Argentina. }
\email{adrian.andrada@unc.edu.ar}
\author{Viviana del Barco}
\address{V.~del Barco: Instituto de Matemática, Estatística e Computação Científica, Universidade Estadual de Campinas,  Rua Sergio Buarque de Holanda, 651, Cidade Universitaria Zeferino Vaz, 13083-859, Campinas, São Paulo, Brazil.}
\email{delbarc@ime.unicamp.br}
\author{Andrei Moroianu}
\address{A.~Moroianu: Université Paris-Saclay, CNRS,  Laboratoire de mathématiques d'Orsay, 91405, Orsay, France, and Institute of Mathematics “Simion Stoilow” of the Romanian Academy, 21
Calea Grivitei, 010702 Bucharest, Romania}
\email{andrei.moroianu@math.cnrs.fr}
\subjclass[2020]{22E25, 53C18, 22E40, 53C30, 53C29} 
\keywords{Conformal geometry, Locally conformally product structures, Solvable Lie groups, Lattices, Weyl connections} 
\begin{document}

\begin{abstract} We study left invariant locally conformally product structures on simply connected Lie groups and give their complete description in the solvable unimodular case. Based on previous classification results, we then obtain the complete list of solvable unimodular Lie algebras up to dimension 5 which carry LCP structures, and study the existence of lattices in the corresponding simply connected Lie groups.
\end{abstract}

\maketitle

\section{Introduction}

A locally conformally product (LCP) structure on a compact connected manifold $M$ is a non-flat Riemannian metric $h$ with reducible holonomy on the universal cover $\tilde M$ such that $\pi_1(M)$ acts by homotheties with respect to $h$, not of all which are isometries. In particular $(\tilde M,h)$ has to be incomplete, so the fundamental group of $M$ is infinite.

It is worth noting that there is a strong similarity between LCP structures and locally conformally Kähler (LCK) structures, which is reflected in the terminology used.  Indeed, an LCK structure on a compact connected manifold $M$ can be defined as a Kähler metric $h$ on the universal cover $\tilde M$ with respect to which $\pi_1(M)$ acts by homotheties, not of all which are isometries. 

Just like in the LCK setting, an LCP manifold is not canonically endowed with a Riemannian metric. Instead, it carries a canonical conformal structure $c$ (induced by $h$) and a closed, non-exact Weyl structure $D$ with reducible holonomy, which is the projection of the Levi-Civita connection of $h$.

Conversely, every conformal manifold $(M,c)$ with a closed non-exact Weyl structure $D$ with reducible but non-flat holonomy is LCP, the metric $h$ on the universal cover being defined as the unique (up to a scalar factor) Riemannian metric on $\tilde M$ whose Levi-Civita connection is the lift of $D$. 

Note that LCP structures are not easy to construct. In fact it was conjectured in \cite{BM2016} that they simply do not exist, based on some evidence related to a result of Gallot on the irreducibility or flatness of cone metrics over compact manifolds \cite{Gallot}, and the fact that every closed non-exact {\em tame} Weyl connection is irreducible or flat. The tameness condition here is related to the life-time of incomplete geodesics of the connection (see \cite[Definition 3.2]{BM2016} for details).

However, shortly after, an example of LCP structure was constructed by Matveev and Nikolayevsky on a 3-dimensional solvmanifold \cite{MN2015}. The same authors then proved that in the analytic setting, the universal cover $(\tilde M,h)$ of any LCP manifold is globally isometric   
to a Riemannian product $\R^q\times (N,g_N)$ where $\R^q$ is a flat Euclidean space of dimension $q\ge 1$ and $(N,g_N)$ is a (necessarily incomplete) Riemannian manifold \cite{MN2017}. The analiticity assumption was later removed by Kourganoff \cite{Kourganoff}, who also showed that the metric of the non-flat factor $N$ is irreducible.

Note that although the universal cover $(\tilde M,h)$ of any LCP manifold is a simply connected Riemannian manifold with reducible holonomy, the global de Rham decomposition theorem does not hold in the incomplete setting, so it is already remarkable that $(\tilde M,h)$ has a global product decomposition. Even more striking, Kourganoff's result says that one of the factors of this product is flat and complete, and the other factor is irreducible.

It turns out that examples of LCP manifolds existed in the literature since 2005, but were not recognized as such until recently, since they were defined within the realm of complex geometry. These examples are the so-called OT manifolds, introduced by Oeljeklaus and Toma \cite{OT2005} by means of number fields with $s$ real embeddings and $2t$ complex embeddings. For $t=1$ the corresponding OT manifolds carry LCK structures, whose corresponding Kähler metrics are reducible (with a flat factor of dimension 2). In fact, building on previous works by Oeljeklaus and Toma \cite{OT2005} and Dubickas \cite{Du14}, Deaconu and Vuletescu \cite{DV22} have recently shown that an OT manifold carries LCK metrics if and only if $t=1$.

 LCP structures have been thoroughly investigated by Flamencourt, who showed that every OT manifold carries LCP structures \cite[Corollary 4.6]{Fl22} and constructed several new families of examples using number fields theory, generalising the OT examples. He also proved a number of interesting features of LCP structures, like the fact that all homothety factors of the action of the fundamental group on the universal cover are algebraic numbers \cite[Proposition 3.11]{Fl22}. This is a remarkable difference with the case of LCK manifolds, where no restriction exists on the homothety factors. 

It was noticed by Kasuya \cite[Section 6]{Ka13} that every OT manifold is isomorphic, as a complex manifold, to a solvmanifold -- i.e. to the quotient of a simply connected  solvable Lie group by a lattice -- with left invariant complex structure. It is thus natural to study the more general problem of LCP structures on solvmanifolds. This is the main topic of the present paper. Here is an outline of our results.

In Section 2 we recall the basics of Riemannian Lie groups, and some extensions to the conformal settings. In Section 3 we define LCP structures on Lie algebras and show that they correspond to left invariant LCP structures on the compact quotients of the corresponding simply connected Lie groups, whenever such compact quotients exist. 

A general construction method for unimodular LCP Lie algebras is given in Section 4, starting from a non-unimodular metric Lie algebra $\mh$ and an orthogonal representation of $\mh/[\mh,\mh]$. We also define amalgamated products of adapted LCP Lie algebras and show that they are again LCP.

In Section 5 we focus on the unimodular solvable case and show that every solvable LCP algebra is obtained by the previous construction. In particular, we obtain the classification of almost abelian unimodular LCP Lie algebras. 

In Section 6 we describe LCP algebras with flat space of codimension at most 3, and in Section 7 we obtain the full list of unimodular solvable Lie algebras of dimension up to 5 which carry LCP structures whose flat space is non-trivial and of positive codimension.

Finally, in Section 8 we study the problem of existence of lattices in the simply connected Lie groups corresponding to the LCP Lie algebras constructed above.

\smallskip 

{\bf Acknowledgements: } A.~M. and V.~dB. thank the French-Brazilian network in Mathematics and  the CAPES-COFECUB project 895/18 for financial support, and are grateful to the Mathematical Department of the Universitá degli Studi di Perugia, where this work was initiated. A.~A. would like to thank the Laboratoire de Math\'ematiques d'Orsay for hospitality. A.~A. is partially supported by CONICET and SECYT-UNC (Argentina).   V.~dB. is partially supported by FAPESP grant 2021/09197-8. The authors are supported by MATHAMSUD Regional Program 21-MATH-06.

\medskip 

\section{LCP structures on Riemannian Lie groups}

\subsection{LCP structures on compact Riemannian manifolds}

As explained in the Introduction, an LCP structure on a compact manifold $M$ is a conformal structure $c$ together with a closed non-exact Weyl structure $D$ with reducible holonomy. We will assume throughout the paper that the dimension of $M$ is at least 3.

To make things more precise, let us fix any Riemannian metric $g$ in the conformal class $c$. Then the Weyl structure $D$ can be written as $D=\nabla^\theta:=\nabla^g+\Theta$, where $\Theta$ is a $(2,1)$ tensor determined by some $1$-form $\theta$, called the Lee form of $D$ with respect to $g$, by the formula
\begin{equation}\label{eq:na}
    \Theta_XY:=\theta(X)Y+\theta(Y)X-g(X,Y)\theta^{\sharp_g},
\end{equation}
where $\theta^{\sharp_g}$ denotes the vector field which is the metric dual of $\theta$.
The Weyl structure is called closed or exact if $\theta$ is closed or exact respectively. Note that this does not depend on the choice of $g$ in the conformal class, since by the formulas of conformal change of the Levi-Civita connection (\cite[Theorem 1.159]{Besse2008}), the Lee form of $D$ with respect to any other metric $e^{2f}g\in c$ is $\theta-df$.
We can thus make the following alternative:

\begin{defi}\label{defi:lcp}
   An LCP structure on a compact manifold $M$ is a pair $(g,\theta)$ consisting in a Riemannian metric $g$ and a closed non-exact $1$-form $\theta$, called the Lee form,  such that the holonomy group of the connection $\nabla^\theta:=\nabla^g+\Theta$ -- where $\Theta$ is given by \eqref{eq:na} -- is reducible.
\end{defi}

 The advantage of this definition is that it can be stated only in terms of familiar objects (a Riemannian metric and a closed $1$-form). However, one should notice that the map $(g,\theta)\mapsto (c,D)$ given by $c:=[g]$ and $D:=\nabla^\theta$ is not one-to-one, since the conformal structure and Weyl connection defined by a pair $(g,\theta)$ are the same as those defined by $(e^{2f}g,\theta-df)$, for every smooth function $f$. We will however adopt this point of view here. 

Let $(g,\theta)$ be an LCP structure on $M$, and let $\pi:\tilde M\to M$ denote the universal cover. We consider the pull-back $(\tilde g:=\pi^*g,\,\tilde\theta:=\pi^*\theta)$  of the LCP structure to $\tilde M$. Since $\tilde M$ is simply connected, there exists a function $\varphi$ such that $\tilde\theta=d\varphi$. Using \cite[Theorem 1.159]{Besse2008} again, we see that the Levi-Civita connection of $h:=e^{2\varphi} \tilde g$ is given by 
$$\nabla^h=\nabla^{\tilde g}+\tilde\Theta\qquad\hbox{where}\qquad \tilde\Theta=\pi^*\Theta,$$
in other words, $\nabla^h$ is the lift of the Weyl connection $\nabla^\theta$ to $\tilde M$. 

Note that every $\gamma\in\pi_1(M)$ acts on $\tilde M$  homothetically with respect to $h$. Indeed, since $\gamma^*\tilde\theta=\tilde\theta$ we get $d(\gamma^*\varphi-\varphi)=0$, so there exists $c_\gamma\in\R$ such that $\gamma^*\varphi=\varphi+c_\gamma$, and therefore $\gamma^*h=e^{2c_\gamma}h$. Moreover, not all $c_\gamma$ vanish, since otherwise $\varphi$ would be the pull-back of a function on $M$, so $\theta$ would be exact.

By Definition \ref{defi:lcp}, $h$ has reducible holonomy. However, $(\tilde M,h)$ is incomplete since otherwise the elements $\gamma\in\pi_1(M)$ with $c_\gamma<0$ would have fixed points being contractions. We thus cannot apply de Rham's decomposition theorem, even though $\tilde M$ is simply connected. Nonetheless, we have the following striking result:

\begin{teo} {\rm (Kourganoff \cite[Theorem 1.5]{Kourganoff})}
  \label{kourg}
  Let $(\tilde M,\tilde g)$ be the universal cover of a compact LCP manifold $(M,g)$ and let $h=e^{2\varphi}\tilde g$ be the metric with reducible holonomy in the conformal class of $\tilde g$ introduced above. Then either $(\tilde M,h)$ is flat, or it is globally isometric to a Riemannian product $\mathbb{R}^q\times (N, g_N)$, where $\mathbb{R}^q$ is the flat Euclidean space, and $(N, g_N)$ is an incomplete Riemannian manifold with irreducible holonomy.
\end{teo}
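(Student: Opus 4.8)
The plan is to combine the local de Rham decomposition of the simply connected manifold $(\tilde M,h)$ with the dynamics of the homothety action of $\Gamma:=\pi_1(M)$, upgrading a merely local product structure to a global one. Since the global de Rham theorem is unavailable in the incomplete setting, essentially the whole difficulty lies in this globalization.

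First I would apply the local de Rham theorem. As $h$ has reducible holonomy, the holonomy representation on $T_x\tilde M$ splits orthogonally into a maximal flat summand $V_0$ and non-flat irreducible summands $V_1,\dots,V_k$. Parallel transport turns these into mutually orthogonal parallel distributions $\mathcal F_0,\dots,\mathcal F_k$, each integrable with totally geodesic leaves, so that near every point $\tilde M$ is isometric to the product of the corresponding leaves. Each $\gamma\in\Gamma$ acts by a homothety, hence preserves $\nabla^h$ (a constant rescaling of a metric leaves its Levi-Civita connection unchanged) and therefore the holonomy; thus $\Gamma$ permutes the $\mathcal F_i$ and preserves the flat distribution $\mathcal F_0$.

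Next I would globalize the flat factor. The key claim is that the leaves of $\mathcal F_0$ are complete, so, being simply connected and flat, each is isometric to $\mathbb{R}^q$ with $q=\dim V_0$; the incompleteness of $(\tilde M,h)$ must then be entirely carried by the complementary distribution. This is where the homothety is indispensable: a geodesic tangent to $\mathcal F_0$ leaving $\tilde M$ in finite time could, after applying some $\gamma\in\Gamma$ with $c_\gamma\neq 0$ and using the cocompactness of the $\Gamma$-action, be reparametrized and shown to extend, contradicting maximality. Once the flat leaves are complete, the parallel product structure integrates globally to an isometry $(\tilde M,h)\cong\mathbb{R}^q\times(N,g_N)$, where $N$ is a leaf of $\mathcal F_0^\perp$, necessarily incomplete.

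Finally I would show that $(N,g_N)$ is irreducible, i.e. that $k=1$. The restriction of $\Gamma$ equips $N$ with a homothety of factor $\neq 1$, and the natural tool is Gallot's rigidity theorem \cite{Gallot}, which forces a metric cone over a complete base to be flat or irreducible. The delicate point -- and precisely the reason LCP structures exist despite the heuristic of \cite{BM2016} -- is that the cone naturally attached to the non-flat part is incomplete, so Gallot's theorem cannot be applied directly; one has to exploit the interplay between the homothety dynamics and the parallel splitting to exclude a second non-flat irreducible factor. I expect this globalization in the incomplete regime, together with the careful handling of the cone at infinity, to be the main obstacle, since it is exactly the place where completeness-based arguments break down.
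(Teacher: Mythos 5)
This statement is not proved in the paper at all: it is Kourganoff's theorem, imported verbatim with a citation to \cite[Theorem 1.5]{Kourganoff}, so there is no ``paper's own proof'' to compare against. Judged on its own terms, your proposal is a plausible roadmap that correctly identifies where the difficulty sits, but it is not a proof: each of the three hard steps is announced rather than carried out. (i) For completeness of the leaves of $\mathcal F_0$, you say an escaping geodesic ``could, after applying some $\gamma\in\Gamma$\dots be reparametrized and shown to extend,'' but you give no mechanism. The obstacle is that $h$ is only $\Gamma$-equivariant up to homothety, so $h$-geodesics do not descend to geodesics of a Riemannian metric on the compact quotient; one can only use the $\Gamma$-invariant Weyl connection $D$, whose geodesics are exactly the ones whose possible incompleteness is at stake (this is the ``tameness'' issue from \cite{BM2016}), so the naive compactness-plus-recurrence argument does not close. (ii) Even granting completeness of the flat leaves, the passage from a local de Rham product to a \emph{global} isometric splitting $\tilde M\cong\R^q\times N$ is precisely the content of the theorem and does not follow formally from completeness of one factor on an incomplete simply connected manifold; you assert that the structure ``integrates globally'' without an argument. (iii) For irreducibility of $N$ you explicitly note that Gallot's theorem does not apply (the relevant cone is incomplete) and then leave the exclusion of a second irreducible factor as an expectation. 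So all three pillars are gaps.

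Since the authors deliberately treat this as a black box, the appropriate course in the context of this paper is to cite Kourganoff rather than reprove the result; if you do want to supply a proof, you would need to reproduce the actual technical core of \cite{Kourganoff}, which works with the foliation tangent to the flat factor on the compact quotient, uses minimal sets and the recurrence coming from compactness of $M$ together with the contracting dynamics of the homotheties to establish completeness and maximality of the flat leaves, and only then deduces the global splitting and the irreducibility of the non-flat factor. None of that machinery appears in your sketch.
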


An LCP structure $(g,\theta)$ on $M$ is called {\em adapted} if the lift of $\theta$ to the universal cover $\tilde M$ vanishes on the flat distribution $\R^q$ defined in the above result. Flamencourt proved that for every LCP structure $(g,\theta)$ on $M$, there exists $f\in C^\infty(M)$ such that $(e^{2f}g,\theta-df)$ is adapted \cite[Proposition 3.6]{Fl22}.
He used this result in order to construct LCP structures on every Riemannian product where one of the factors carries an adapted LCP structure \cite[Definition 3.9]{Fl22}.

Our main goal in this paper is to study LCP structures on compact Riemannian manifolds which can be written as quotients of Riemannian Lie groups by lattices (i.e. discrete co-compact subgroups). We give the necessary prerequisites in the next subsection.

 \subsection{Riemannian Lie groups}

A Riemannian Lie group is a Lie group $G$ endowed with a left invariant Riemannian metric $g$. The restriction of $g$ to the Lie algebra $\mg$ of $G$ is a scalar product, also denoted by $g$. 

A left invariant $1$-form on $G$ is equivalent to an element $\theta\in\mg^*$. The $1$-form $\theta$ is closed if and only if 
\begin{equation}\label{eq:tg}
    \theta|_{\mg'}=0,
\end{equation}
    where here, and throughout the paper, $\mg'$ denotes the commutator ideal $[\mg,\mg]$.

The Levi-Civita connection of $g$ induces a linear map $x\mapsto \nabla_x^g$, from $\mg$ to $\mathfrak{so}(\mg)$, defined by the Koszul formula
\begin{equation}
\label{eq:lc}\lela\nabla^g_xy,z\rira=\frac12\left(\lela  [x,y],z\rira-\lela[x,z],y\rira-\lela[y,z],x)\right)\qquad \forall\; x,y,z\in \mg.
\end{equation} Here and henceforth $\so(\mg)$ denotes the space of endomorphisms of $\mg$ which are skew-symmetric with respect to $g$. 

Recall that for any Lie algebra $\mg$, one can define a 1-form $ H^\mg\in\mg^*$ called the {\em trace form} given by
\begin{equation}
H^\mg(x):=\tr(\ad_x)\qquad \forall\; x\in\mg.
\end{equation}
For every $x,y\in \mg$ we have 
$$H^\mg([x,y])=\tr\ad_{[x,y]}=\tr[\ad_x,\ad_y]=0.$$
Thus $H^\mg(\mg')=0$, so $H^\mg$ is closed. 
By definition, $\mg$ is unimodular if and only if $H^\mg=0$.

Let $c$ denote the conformal structure on $G$ defined by the left invariant metric $g$. As explained above, every Weyl connection (i.e. torsion-free conformal connection) can be expressed as $\nabla^g+\Theta$, where $\Theta$ is defined by \eqref{eq:na}. If $\theta$ is left invariant, the Weyl connection is called left invariant. We then identify $\theta$ with the corresponding linear form $\theta\in\mg^*$. 

Any left invariant Weyl connection defines a linear map $\nabla^\theta:\mg\otimes\mg\to \mg$ satisfying
\begin{equation}
\label{eq:weylc}
\nabla^\theta_xy=\nabla^g_xy+\theta(x)y+\theta(y)x-g(x,y)\theta^\sharp\qquad \forall\; x,y\in \mg,
\end{equation}
where here and at several other places throughout the paper, 
$\theta^\sharp\in \mg$ denotes the $g$-dual vector in $\mg$.
In terms of the the skew-symmetric endomorphism $\theta\wedge x$ of $\mg$ defined by $(\theta\wedge x)(y):=\theta(y)x-g(x,y)\theta^\sharp$, this equation can also be written
\begin{equation}
\label{eq:weylc1}
\nabla^\theta_x=\nabla^g_x+\theta\wedge x+\theta(x)\Id_\mg\qquad\forall\; x\in \mg,
\end{equation}
showing that 
\begin{equation}
\label{eq:weylc2}
\nabla^\theta_x-\theta(x)\Id_\mg\in\so(\mg)\qquad\forall\; x\in \mg.
\end{equation}

Summarizing \eqref{eq:lc} and \eqref{eq:weylc}, the Weyl connection $\nabla^\theta$ is given by
    \begin{multline}
\label{eq:LCP}
\lela\nabla^\theta_xy,z\rira=\frac12\left(\lela  [x,y],z\rira-\lela[x,z],y\rira-\lela[y,z],x)\right)\\
+\theta(x)\lela y,z\rira+\theta(y)\lela x,z\rira-\theta(z)g(x,y)\qquad \forall\; x,y,z\in \mg.
\end{multline}

\subsection{Invariant LCP structures on Lie groups}
We will now introduce the notion of LCP structure on metric Lie algebras and prove that for every simply connected Lie group $G$ admitting lattices, there is a one-to-one correspondence between invariant LCP structures on quotients of $G$ by a lattice, and LCP structures on the (unimodular) Lie algebra of $G$.

Let $(\mg,g)$ be a metric Lie algebra, $\theta\in\mg^*$ a closed 1-form and $\nabla^\theta$ the connection defined in \eqref{eq:LCP}.
We denote by $R^\theta$ the curvature tensor of $\nabla^\theta$, that is 
\begin{equation}\label{eq:r}
R^\theta_{x,y}=[\nabla^\theta_x,\nabla^\theta_y]-\nabla^\theta_{[x,y]}\qquad\forall\; x,y\in\mg.
\end{equation}
We say that a subspace $\mmu\subset \mg$ is $\nabla^\theta$-parallel
if 
\begin{equation}
\label{eq:separa}
\nabla^\theta_x u\in \mmu \qquad \forall\;u \in \mmu,\,\forall\; x \in\mg,
\end{equation}
and $\nabla^\theta$-flat if it is $\nabla^\theta$-parallel and 
\begin{equation}
\label{eq:seflat}
R^\theta_{x,y}u=0 \qquad \forall\;u \in \mmu,\,\forall\; x,y \in\mg.
\end{equation}

\begin{defi} \label{defi} A locally conformally product (LCP) Lie algebra is a quadruple $(\mg,g,\theta,\mmu)$ where $(\mg,g)$ is a metric Lie algebra, $\theta$ is  a non-zero closed $1$-form on $\mg^*$ and $\mmu$ is a $\nabla^\theta$-flat subspace of $(\mg,g)$. The LCP structure $(g,\theta,\mmu)$ on $\mg$ is called {\em adapted} if $\theta|_\mmu=0$, {\em degenerate} if $\mmu=0$, and {\em conformally flat} if $\mmu=\mg$.
\end{defi}

Since $\nabla^\theta$ does not change if the metric $g$ is replaced by $\lambda g$ for some $\lambda>0$, it follows that the LCP condition is invariant to constant rescalings of the metric.

Note that the case of conformally flat Lie algebras has been studied by Maier \cite{Maier98}.

As already mentioned, our main objective will be to study LCP structures on compact Riemannian manifolds obtained as quotients of Riemannian Lie groups by lattices. 

Let $G$ be a simply connected Lie group with Lie algebra $\mg$ and let $\Gamma\subset G$ be any lattice. It is well known that if $G$ has a lattice then its Lie algebra $\mg$ is unimodular \cite{Mil76}. 

We consider the quotient $M:=\Gamma\backslash G$. Recall that every element in the tensor algebra of $\mg$ defines a left invariant tensor on $G$, which is in particular $\Gamma$-invariant, so projects to a tensor on $M$. From now on, let $\theta\in \mg^*$ and $g\in\Sym^2(\mg^*)$ be a closed 1-form and a scalar product. We will denote by the same letters the corresponding closed 1-form and Riemannian metric on $G$ and on $M$. We then have:

\begin{pro}\label{pro:equiv}
    The pair $(g,\theta)\in \Sym^2(T^*M)\times \Omega^1(M)$ on $M$ induced by a scalar product $g\in\Sym^2(\mg^*)$ and a closed non-zero $1$-form $\mg^*$ is an LCP structure if and only if there exists a vector subspace $\mmu\subset\mg$ such that $(g,\theta,\mmu)$ is an LCP structure on $\mg$.
\end{pro}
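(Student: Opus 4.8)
The plan is to separate the two defining conditions of Definition~\ref{defi:lcp} --- non-exactness of $\theta$ and reducibility of the holonomy of $\nabla^\theta$ --- and to match each with the algebraic data of Definition~\ref{defi}. Throughout I use that the universal cover of $M=\Gamma\bs G$ is $G$, that the pulled-back metric is the left invariant metric $\tilde g$, and that (as recalled just before Theorem~\ref{kourg}) the lift of the Weyl connection is simultaneously left invariant and equal to the Levi-Civita connection $\nabla^h$ of $h=e^{2\varphi}\tilde g$. In particular the curvature of this connection on $G$ is left invariant, with value at the identity the algebraic tensor $R^\theta$ of \eqref{eq:r}.

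The non-exactness condition is quick: a non-zero element of $\mg^*$ gives a nowhere-vanishing left invariant $1$-form on $G$, hence on $M$, while on the compact manifold $M$ any exact $1$-form $df$ must vanish at a critical point of $f$. Thus, for closed left invariant $\theta$, non-exactness on $M$ is equivalent to $\theta\neq0$, which is precisely the standing hypothesis of Definition~\ref{defi}.

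For the implication starting from the Lie algebra, suppose $\mmu$ is a non-zero $\nabla^\theta$-flat subspace. If $\mmu=\mg$, then \eqref{eq:seflat} forces $R^\theta=0$, so $h$ is flat and its holonomy is (reducible) trivial. If $\mmu$ is proper, then \eqref{eq:separa} says that the left invariant distribution it spans on $G$ is $\nabla^\theta$-parallel; being left invariant it descends to $M$, and a proper non-zero parallel distribution makes the holonomy representation of $\nabla^\theta$ reducible. Either way $(g,\theta)$ is LCP on $M$. Note that only the parallelism \eqref{eq:separa} is used here; flatness is what the converse will have to produce.

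The converse is the heart of the matter, and it is where Theorem~\ref{kourg} enters. Assuming $(g,\theta)$ is LCP on $M$, I would apply Kourganoff's theorem to $(G,h)$: either $h$ is flat, and then $R^\theta=0$ and $\mmu=\mg$ works, or $(G,h)$ splits isometrically as $\R^q\times(N,g_N)$ with $N$ irreducible (hence non-flat) and $q\ge1$. Let $\mcD$ be the distribution tangent to the flat factor; it is $\nabla^h$-parallel and annihilated by the curvature. The key difficulty is that the isometric splitting is not a priori compatible with the left invariant structure, so I must show that $\mcD$ is nevertheless left invariant. This I would do by a canonicity argument: because $N$ is irreducible and non-flat, $\mcD$ is the \emph{unique} maximal $\nabla^h$-parallel distribution on which the curvature of $\nabla^h$ vanishes. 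Since $\nabla^h=\nabla^\theta$ is left invariant, each left translation $L_a$ is an affine transformation of this connection and therefore carries maximal flat parallel distributions to maximal flat parallel distributions; uniqueness then forces $(L_a)_*\mcD=\mcD$, so $\mcD$ is left invariant. Setting $\mmu\ce\mcD_e$, the parallelism and flatness of $\mcD$ evaluated at the identity yield \eqref{eq:separa} and \eqref{eq:seflat}, while $q\ge1$ ensures $\mmu\neq0$. Hence $(\mg,g,\theta,\mmu)$ is an LCP structure on $\mg$, which closes the equivalence. I expect the left invariance of the flat factor to be the main obstacle, since it is exactly the step where the global-but-not-invariant output of Kourganoff's theorem must be converted into the purely algebraic datum of Definition~\ref{defi}.
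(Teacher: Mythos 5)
Your proof is correct and follows essentially the same route as the paper: the easy direction translates a left invariant parallel distribution into reducibility of the holonomy on the quotient, and the converse invokes Kourganoff's theorem and then argues that the flat factor's tangent distribution is left invariant. The one place you go further is worth noting: the paper simply asserts that left invariance of $\nabla^\theta$ forces the distribution $U$ to be left invariant, whereas your canonicity argument (uniqueness of the maximal flat parallel distribution, coming from the irreducibility and non-flatness of $N$) is precisely the justification that assertion needs.
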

\begin{proof}
    Assume first that $(g,\theta)$ is an LCP structure on $M$. Then its pull-back to the universal cover $G$ is a left invariant LCP structure on $G$. By Theorem \ref{kourg}, there exists a $\nabla^\theta$-flat distribution $U$ on the Riemannian Lie group $(G,g)$. As the Weyl connection $\nabla^\theta$ is left invariant, $U$ is left-invariant too, so defines a subspace $\mmu$ of $\mg$ which is $\nabla^\theta$-flat.

    Conversely, if $(g,\theta,\mmu)$ is an LCP structure on $\mg$, the $\nabla^\theta$-flat subspace $\mmu$ of $\mg$ determines by left translations a left invariant distribution $U$ on $G$ which is parallel with respect to the connection $\nabla^\theta$. This shows that the corresponding Weyl connection $\nabla^\theta$ on the quotient $M=\Gamma\backslash G$ has reducible holonomy, so $(M,g,\theta)$ is LCP.
\end{proof}

\medskip 

\section{LCP structures on Lie algebras}

In view of Proposition \ref{pro:equiv}, we will from now on study LCP structures on Lie algebras. Then, in the last section, we will investigate the existence of lattices in the simply connected groups corresponding to LCP Lie algebras. We start with some general results.

\begin{lm}\label{lm:sub}
    Let $(\mg,g,\theta,\mmu)$ be an LCP  Lie algebra.
    Then the subspaces $\mmu$ and its $g$-orthogonal complement $\mmu^\bot$ are subalgebras of $\mg$.
\end{lm}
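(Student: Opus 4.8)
The plan is to exploit two structural features of the Weyl connection $\nabla^\theta$: that it is torsion-free, and that it is conformal with respect to $g$. Neither the full flatness \eqref{eq:seflat} nor the precise form of $R^\theta$ is needed here — only the fact that $\mmu$ is $\nabla^\theta$-parallel, i.e.\ \eqref{eq:separa}.

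First I would record that $\nabla^\theta$ is torsion-free. Indeed, the tensor $\Theta$ in \eqref{eq:na} is symmetric in its two arguments, so the difference $\nabla^\theta-\nabla^g$ is symmetric, and since $\nabla^g$ is torsion-free we obtain
$$\nabla^\theta_x y-\nabla^\theta_y x=[x,y]\qquad\forall\, x,y\in\mg.$$
With this identity the claim for $\mmu$ is immediate: for $u,v\in\mmu$, both $\nabla^\theta_u v$ and $\nabla^\theta_v u$ lie in $\mmu$ by \eqref{eq:separa}, hence so does their difference $[u,v]$. Thus $\mmu$ is a subalgebra.

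For $\mmu^\bot$ the extra ingredient is the conformal property of $\nabla^\theta$. From \eqref{eq:weylc2}, writing $\nabla^\theta_x=A_x+\theta(x)\Id_\mg$ with $A_x\in\so(\mg)$, a direct computation on the (constant) scalar product $g$ gives
$$(\nabla^\theta_x g)(y,z)=-g(\nabla^\theta_x y,z)-g(y,\nabla^\theta_x z)=-2\theta(x)g(y,z).$$
I would then use this to show that $\mmu^\bot$ is itself $\nabla^\theta$-parallel. Take $w\in\mmu^\bot$, $u\in\mmu$ and $x\in\mg$. Since $\mmu$ is parallel, $\nabla^\theta_x u\in\mmu$, so $g(w,\nabla^\theta_x u)=0$; combining this with the displayed identity applied to $y=w$, $z=u$ (and using $g(w,u)=0$) yields $g(\nabla^\theta_x w,u)=0$. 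As $u\in\mmu$ was arbitrary, $\nabla^\theta_x w\in\mmu^\bot$, proving that $\mmu^\bot$ is $\nabla^\theta$-parallel.

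Finally, the torsion-free identity applies verbatim to $\mmu^\bot$: for $w_1,w_2\in\mmu^\bot$, both $\nabla^\theta_{w_1}w_2$ and $\nabla^\theta_{w_2}w_1$ lie in $\mmu^\bot$, hence $[w_1,w_2]=\nabla^\theta_{w_1}w_2-\nabla^\theta_{w_2}w_1\in\mmu^\bot$. The only genuinely non-formal step is establishing the parallelism of $\mmu^\bot$, where it is essential that the metric defect of $\nabla^\theta$ is purely conformal (proportional to $g$ itself) and therefore vanishes on orthogonal pairs; this is precisely what allows the standard argument ``the orthogonal complement of a parallel subspace is parallel'' to survive the passage from the metric connection $\nabla^g$ to the Weyl connection.
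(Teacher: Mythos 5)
Your proof is correct and follows essentially the same route as the paper: torsion-freeness of $\nabla^\theta$ shows every $\nabla^\theta$-parallel subspace is a subalgebra, and the conformal property \eqref{eq:weylc2} (equivalently \eqref{eq:weylc1}) shows the orthogonal complement of a parallel subspace is parallel. You have merely written out in detail the computation that the paper leaves implicit in its appeal to \eqref{eq:weylc1}.
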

\begin{proof} From \eqref{eq:lc} and \eqref{eq:weylc} we immediately obtain
\begin{equation}
\label{eq:torsion}
 [x,y]=   \nabla^\theta_xy-\nabla^\theta_yx\qquad \forall\; x,y\in \mg,
\end{equation}
which of course just translates the fact that $\nabla^\theta$ is torsion-free.
By \eqref{eq:torsion}, every $\nabla^\theta$-parallel space is a subalgebra of $\mg$. Moreover, \eqref{eq:weylc1} shows that the orthogonal complement of a $\nabla^\theta$-parallel space is again $\nabla^\theta$-parallel.  
\end{proof}

We can now give a more explicit characterisation of LCP structures:

\begin{pro}\label{pro:algLCP} 
Let $(\mg,g)$ be a metric Lie algebra, $\theta\in\mg^*$ a non-zero closed $1$-form, and $\mmu\subset\mg$ a vector subspace.  Then $\mmu$ is $\nabla^\theta$-parallel if and only if the following two conditions hold:
\begin{enumerate}
\item $\mmu$ and $\mmu^\perp$ are Lie subalgebras;
\item for every $u\in\mmu$ and $x\in\mmu^\perp$, 
\begin{equation}\label{eq:xu}g([u,x],x)=\theta(u)|x|^2\qquad\hbox{and}\qquad g([x,u],u)=\theta(x)|u|^2.
\end{equation}
\end{enumerate}
Moreover, $(\mg,g,\theta,\mmu)$ is LCP if and only if in addition to $(1)$ and $(2)$, the following condition holds:
\begin{enumerate}
\item[(3)] the map $\nabla^\theta:\mg\to \so(\mmu)\oplus\R\Id_\mmu\subset \gl(\mmu)$, defined by $x\mapsto \nabla_x^\theta|_\mmu$, is a Lie algebra representation.
\end{enumerate}
\end{pro}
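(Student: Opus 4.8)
The plan is to split the statement into two independent equivalences: first, that $\mmu$ is $\nabla^\theta$-parallel if and only if conditions (1) and (2) hold; and second, assuming parallelism, that the flatness condition defining the LCP structure is equivalent to (3). Throughout I would work with the Koszul-type expression \eqref{eq:LCP} for $\nabla^\theta$, together with the algebraic fact \eqref{eq:weylc2} that $\nabla^\theta_x-\theta(x)\Id_\mg$ is $g$-skew-symmetric, and the observation from the proof of Lemma \ref{lm:sub} that $\mmu^\perp$ is $\nabla^\theta$-parallel whenever $\mmu$ is.

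For the forward implication of the first equivalence, condition (1) is immediate from Lemma \ref{lm:sub}. For (2), I would use the torsion-free identity \eqref{eq:torsion} to write $[u,x]=\nabla^\theta_u x-\nabla^\theta_x u$ for $u\in\mmu$, $x\in\mmu^\perp$. Pairing with $x$ and using that $\nabla^\theta_x u\in\mmu$ is orthogonal to $x\in\mmu^\perp$ kills one term, while \eqref{eq:weylc2} turns $g(\nabla^\theta_u x,x)$ into $\theta(u)|x|^2$; the symmetric computation (pairing $[x,u]$ with $u$ and using that $\nabla^\theta_u x\in\mmu^\perp$) yields the second identity in (2).

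The reverse implication is the main obstacle. Assuming (1) and (2), I must verify $\nabla^\theta_x u\in\mmu$ for every $u\in\mmu$, splitting into $x\in\mmu$ and $x\in\mmu^\perp$ via the orthogonal decomposition of $\mg$. In each case I would compute $g(\nabla^\theta_x u,y)$ for $y\in\mmu^\perp$ directly from \eqref{eq:LCP}; the subalgebra property (1) annihilates the bracket lying in the ``wrong'' summand, leaving a combination of two mixed bracket terms and a single $\theta$-term. The crucial point is that (2) must be \emph{polarized}: each identity in (2) is quadratic in one of its arguments, and replacing that argument by a sum produces the bilinear relations $g([u,x],y)+g([u,y],x)=2\theta(u)g(x,y)$ for $x,y\in\mmu^\perp$, and $g([x,u_1],u_2)+g([x,u_2],u_1)=2\theta(x)g(u_1,u_2)$ for $u_1,u_2\in\mmu$. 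Substituting the appropriate polarized identity into each case makes the surviving terms cancel exactly, giving $g(\nabla^\theta_x u,y)=0$. Matching the polarized identities to the precise bracket terms appearing in \eqref{eq:LCP}, with the right signs, is where the bookkeeping is delicate.

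For the second equivalence, I would first note that since $\mmu$ is parallel, every $\nabla^\theta_x$ preserves $\mmu$, so $\nabla^\theta_x|_\mmu$ is well defined and lies in $\so(\mmu)\oplus\R\Id_\mmu$ by \eqref{eq:weylc2}; hence the target of the map in (3) is correct, and $R^\theta_{x,y}$ also preserves $\mmu$. Restricting \eqref{eq:r} to $\mmu$ then gives
\begin{equation*}
R^\theta_{x,y}|_\mmu=[\nabla^\theta_x|_\mmu,\nabla^\theta_y|_\mmu]-\nabla^\theta_{[x,y]}|_\mmu,
\end{equation*}
so the restricted curvature measures exactly the failure of $x\mapsto\nabla^\theta_x|_\mmu$ to be a Lie algebra homomorphism. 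Therefore $R^\theta_{x,y}u=0$ for all $u\in\mmu$ and $x,y\in\mg$ --- i.e. $\mmu$ is $\nabla^\theta$-flat, which (given parallelism) is the LCP condition --- holds if and only if (3) holds. This last step is essentially formal once parallelism has been established.
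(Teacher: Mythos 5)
Your proposal is correct and follows essentially the same route as the paper's proof: deduce (1) from Lemma \ref{lm:sub}, obtain (2) by pairing with a repeated vector and using skew-symmetry/parallelism, prove the converse by computing $g(\nabla^\theta_x u,z)$ from \eqref{eq:LCP} in the two cases $x\in\mmu$ and $x\in\mmu^\perp$ and cancelling via the polarized forms of \eqref{eq:xu} (which you state correctly), and read off the equivalence of flatness with (3) directly from \eqref{eq:r}. The only cosmetic difference is that for the forward direction of (2) you use the torsion identity \eqref{eq:torsion} together with \eqref{eq:weylc2}, while the paper substitutes $x=z$ directly into \eqref{eq:LCP}; these are equivalent one-line manipulations.
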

\begin{proof}
    Assume that $\mmu$ is $\nabla^\theta$-parallel. Then (1) follows from Lemma \ref{lm:sub}. 
    
Taking $x=z\in\mmu^\bot$ and $y\in\mmu$ in \eqref{eq:LCP} and using that $\lela\nabla^\theta_xu,x\rira=0$ since $\mmu$ is $\nabla^\theta$-parallel, yields the first part of \eqref{eq:xu}.
Similarly, taking $x=z\in\mmu$ and $y\in\mmu^\bot$ in \eqref{eq:LCP} yields the second part of \eqref{eq:xu}.

If moreover $(g,\theta,\mmu)$ is LCP, then $\mmu$ is $\nabla^\theta$-flat, so (3) follows from \eqref{eq:r}.

Conversely, assume that (1)--(2) hold. We need to show that $\lela\nabla^\theta_xy,z\rira$ vanishes whenever $y\in \mmu$ and $z\in\mmu^\perp$. If $x\in\mmu$, using (1) and \eqref{eq:LCP} we get
\begin{eqnarray*}
    \lela\nabla^\theta_xy,z\rira&=&\frac12\left(-\lela[x,z],y\rira-\lela[y,z],x)\right)-\theta(z)g(x,y)\\
    &=&\frac12\left(\lela[z,x],y\rira+\lela[z,y],x)\right)-\theta(z)g(x,y)=0
\end{eqnarray*}
    by polarising the second part of \eqref{eq:xu}. Similarly, for $x\in \mmu^\perp$, by (1) and \eqref{eq:LCP} we obtain
    \begin{eqnarray*}
    \lela\nabla^\theta_xy,z\rira&=&\frac12\left(\lela[x,y],z\rira-\lela[y,z],x)\right)+\theta(y)g(x,z)\\
    &=&\frac12\left(-\lela[y,x],z\rira-\lela[y,z],x)\right)-\theta(y)g(x,z)=0
\end{eqnarray*}
by polarising the first part of \eqref{eq:xu}. Thus $\mmu$ is $\nabla^\theta$-parallel. 

If, in addition, (3) holds, the fact that $\mmu$ is $\nabla^\theta$-flat follows directly from \eqref{eq:r}.
\end{proof}

\begin{cor} \label{cor:3.4}
Let $(\mg,g,\theta,\mmu)$ be an LCP  Lie algebra and denote by $q$ and $n$ the dimensions of $\mmu$ and $\mg$ respectively. If $\mg$ is unimodular, the trace forms of $\mmu$ and $\mmu^\bot$ are related to $\theta$ by the relations
    \begin{equation}\label{eq:trace}
        H^\mmu=-(n-q) \,\theta|_\mmu,\qquad H^{\mmu^\bot}=-q\,\theta|_{\mmu^\bot}.
    \end{equation}
\end{cor}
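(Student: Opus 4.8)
The plan is to exploit the unimodularity hypothesis $H^\mg=0$ by computing the trace of $\ad_u$, for $u\in\mmu$, in an orthonormal basis adapted to the orthogonal splitting $\mg=\mmu\oplus\mmu^\perp$. Recall that for any linear operator $T$ and any orthonormal basis $\{e_k\}$ of $(\mg,g)$ one has $\tr(T)=\sum_k g(Te_k,e_k)$. So, choosing orthonormal bases $\{u_j\}_{j=1}^q$ of $\mmu$ and $\{x_i\}_{i=1}^{n-q}$ of $\mmu^\perp$, I would start from
$$0=H^\mg(u)=\tr(\ad_u)=\sum_{j}g([u,u_j],u_j)+\sum_{i}g([u,x_i],x_i).$$

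For the first sum I would use that $\mmu$ is a subalgebra (Lemma \ref{lm:sub}), so that $[u,u_j]\in\mmu$ and the sum is precisely the trace of $\ad_u|_\mmu$, that is $H^\mmu(u)$. For the second sum I would invoke the first identity in \eqref{eq:xu} of Proposition \ref{pro:algLCP}, namely $g([u,x],x)=\theta(u)|x|^2$ for $u\in\mmu$ and $x\in\mmu^\perp$; since the $x_i$ are unit vectors this yields $\sum_i g([u,x_i],x_i)=(n-q)\theta(u)$. Combining the two contributions gives $H^\mmu(u)=-(n-q)\,\theta(u)$ for all $u\in\mmu$, which is the first relation in \eqref{eq:trace}.

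The second relation follows from the symmetric argument, interchanging the roles of $\mmu$ and $\mmu^\perp$: the latter is also a subalgebra by Lemma \ref{lm:sub}, and the second identity in \eqref{eq:xu}, $g([x,u],u)=\theta(x)|u|^2$, replaces the first. Evaluating $0=H^\mg(x)$ for $x\in\mmu^\perp$ in the same adapted basis then gives $H^{\mmu^\perp}(x)=-q\,\theta(x)$, as claimed.

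There is no serious obstacle here, since the statement is a direct consequence of the structural facts already in hand. The only points requiring care are matching each half of \eqref{eq:xu} to the correct trace form, paying attention to the order of the bracket, and observing that it is exactly the subalgebra property from Lemma \ref{lm:sub} which guarantees that the $\mmu$-diagonal contribution to $\tr(\ad_u)$ reproduces the intrinsic trace form $H^\mmu$ rather than merely a partial trace.
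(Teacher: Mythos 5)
Your proof is correct and is essentially identical to the paper's: the paper likewise expands $0=\tr(\ad_y)$ in an orthonormal basis adapted to $\mg=\mmu\oplus\mmu^\perp$, identifies the diagonal block over the subalgebra with the intrinsic trace form, and evaluates the cross terms via the two halves of \eqref{eq:xu}. No differences worth noting.
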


\begin{proof}
    Let $(e_i)_{1\le i\le q}$ and $(f_j)_{1\le j\le n-q}$ be some orthonormal bases of $\mmu$ and $\mmu^\bot$. Since $\mg$ is assumed to be unimodular, for every $y\in \mg$ we have 
$$0=\tr(\ad_y)=\sum_{i=1}^qg([y,e_i],e_i)+\sum_{j=1}^{n-q}g([y,f_j],f_j),$$
    whence
\begin{equation}\label{eq:hU}
H^\mmu(y)+\sum_{j=1}^{n-q}g([y,f_j],f_j)=0\qquad\forall\; y\in \mmu,
\end{equation}
and
\begin{equation}\label{eq:hUbot}
\sum_{i=1}^qg([y,e_i],e_i)+H^{\mmu^\bot}(y)=0\qquad\forall\; y\in \mmu^\bot.
\end{equation}
Then \eqref{eq:trace} follows immediately by taking $x=f_j$ in the first part of \eqref{eq:xu} and $u=e_i$ in the second part of \eqref{eq:xu}.
\end{proof}

\begin{cor}\label{cor:abelian} On an abelian Lie algebra of dimension $n\ge 3$, every LCP structure is degenerate.
\end{cor}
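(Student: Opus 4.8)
The plan is to show that a non-degenerate $\mmu$ forces a contradiction, so that necessarily $\mmu=0$. Since $\mg$ is abelian, all brackets vanish, hence by the Koszul formula \eqref{eq:lc} we have $\nabla^g=0$, and the closedness condition \eqref{eq:tg} on $\theta$ is automatic. The Weyl connection \eqref{eq:weylc} therefore reduces to
\[
\nabla^\theta_x y=\theta(x)y+\theta(y)x-g(x,y)\theta^\sharp\qquad\forall\,x,y\in\mg.
\]
Assume for contradiction that $(\mg,g,\theta,\mmu)$ is an LCP structure with $q:=\dim\mmu\ge 1$.

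First I would settle the dichotomy \emph{degenerate or conformally flat} using Corollary \ref{cor:3.4}. Being abelian, $\mg$ is unimodular, and the subalgebras $\mmu$ and $\mmu^\bot$ are abelian as well, so their trace forms vanish: $H^\mmu=0$ and $H^{\mmu^\bot}=0$. Plugging this into \eqref{eq:trace} gives $(n-q)\,\theta|_\mmu=0$ and $q\,\theta|_{\mmu^\bot}=0$. Since $q\ge 1$ we obtain $\theta|_{\mmu^\bot}=0$, and if moreover $q<n$ then $n-q\ge 1$ forces $\theta|_\mmu=0$ as well, whence $\theta=0$, contradicting the requirement that $\theta$ be non-zero in Definition \ref{defi}. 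Therefore the only remaining possibility is $q=n$, that is, $\mmu=\mg$ and the structure is conformally flat. (Alternatively one reaches the same dichotomy directly from condition (2) of Proposition \ref{pro:algLCP}, since $[u,x]=0$ forces $\theta(u)|x|^2=0$ and $\theta(x)|u|^2=0$.)

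It remains to exclude the conformally flat case, and this is the step where the hypothesis $n\ge 3$ enters, which I expect to be the main obstacle. Here $\mmu=\mg$ being $\nabla^\theta$-flat means $R^\theta\equiv 0$, so I would compute $R^\theta$ explicitly from \eqref{eq:r} using the simplified formula above. Writing $|\theta|^2:=g(\theta^\sharp,\theta^\sharp)$, a direct calculation (the $\nabla^\theta_{[x,y]}$-term drops out because $\mg$ is abelian) yields
\[
R^\theta_{x,y}z=\bigl(\theta(y)\theta(z)-|\theta|^2 g(y,z)\bigr)x-\bigl(\theta(x)\theta(z)-|\theta|^2 g(x,z)\bigr)y+\bigl(\theta(x)g(y,z)-\theta(y)g(x,z)\bigr)\theta^\sharp.
\]

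Finally I would exhibit a non-vanishing curvature term. Since $\theta\ne 0$, the vector $\theta^\sharp$ is non-zero, so its orthogonal complement $(\theta^\sharp)^\bot$ has dimension $n-1\ge 2$ precisely because $n\ge 3$. Choosing two non-zero orthogonal vectors $x,z\in(\theta^\sharp)^\bot$ (so that $\theta(x)=\theta(z)=0$ and $g(x,z)=0$) and evaluating the formula above at $y=z$ gives $R^\theta_{x,z}z=-|z|^2|\theta|^2\,x\ne 0$, contradicting flatness. This rules out $\mmu=\mg$ and, combined with the dichotomy above, forces $\mmu=0$, i.e. the LCP structure is degenerate. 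The computation also makes transparent why the statement fails in dimension $2$: there $(\theta^\sharp)^\bot$ is a line and no such orthogonal pair exists.
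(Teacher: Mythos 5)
Your proof is correct and follows essentially the same route as the paper: first the dichotomy ``degenerate or conformally flat'' (the paper gets it directly from Proposition \ref{pro:algLCP}(2), which is your parenthetical alternative; your primary trace-form argument via Corollary \ref{cor:3.4} is an equivalent repackaging), and then exclusion of the conformally flat case by exhibiting non-zero curvature on two orthogonal vectors in $\ker\theta$, where $n\ge 3$ enters. Your explicit formula for $R^\theta$ is just the expanded form of the paper's computation $R^\theta_{x,y}=[\theta\wedge x,\theta\wedge y]$ restricted to $x,y\in\ker\theta$, so the two arguments coincide in substance.
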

\begin{proof}
Assume that $(\mg,g,\theta,\mmu)$ is an LCP abelian Lie algebra. By Proposition \ref{pro:algLCP} (2) and the fact that $\theta\ne 0$, we see that either $\mmu=0$, so the LCP structure is degenerate, or $\mmu^\perp=0$, so $(\mg,g,\theta)$ is conformally flat. 

In the latter case, \eqref{eq:weylc1} together with Proposition \ref{pro:algLCP} (2) show that 
$$0=[\nabla^\theta_x,\nabla^\theta_y]=[\theta\wedge x,\theta\wedge y]\qquad \forall\; x,y\in\mg.$$
In particular, for every $x,y$ non-zero vectors in $\ker(\theta)$, this yields $0=|\theta|^2x\wedge y$, so $\ker(\theta)$ is 1-dimensional, which contradicts the fact that $n\ge 3$.
\end{proof}

\begin{cor}\label{cor:modif} If $(\mg,g,\theta,\mmu)$ is an adapted LCP Lie algebra, then for every $\lambda\in\R$ such that $g_\lambda:=g+\lambda\theta\otimes\theta$ is positive definite, $(\mg,g_\lambda,\theta,\mmu)$ is also an adapted LCP Lie algebra.
\end{cor}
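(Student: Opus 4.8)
The plan is to verify that $(\mg,g_\lambda,\theta,\mmu)$ satisfies the three conditions of Proposition \ref{pro:algLCP}, exploiting the fact that the deformation $g_\lambda=g+\lambda\,\theta\otimes\theta$ is compatible with the splitting $\mg=\mmu\oplus\mmu^\perp$. The starting observation is that, since the structure is adapted, $\theta|_\mmu=0$, so $g_\lambda$ agrees with $g$ on $\mmu$ and $\theta^\sharp$ lies in $\mmu^\perp$. Consequently, for $u\in\mmu$ and $x\in\mmu^\perp$ one computes $g_\lambda(u,x)=g(u,x)+\lambda\theta(u)\theta(x)=0$, so that $\mmu^\perp$ is also the $g_\lambda$-orthogonal complement of $\mmu$. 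In particular the decomposition $\mg=\mmu\oplus\mmu^\perp$ is unchanged, and the adaptedness condition $\theta|_\mmu=0$ persists for $g_\lambda$.

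Condition (1) of Proposition \ref{pro:algLCP} is purely algebraic and therefore unaffected by the change of metric. For condition (2), I would substitute $g_\lambda$ into \eqref{eq:xu} and simplify using the two features of the adapted setting: $\theta$ vanishes on $\mmu$, and, being closed, $\theta$ vanishes on $\mg'=[\mg,\mg]$ by \eqref{eq:tg}. For $u\in\mmu$ and $x\in\mmu^\perp$, the correction term $\lambda\theta([u,x])\theta(x)$ drops out because $[u,x]\in\mg'$, while $|u|_{g_\lambda}^2=|u|_g^2$ and $\theta(u)=0$; a short computation then shows that both identities in \eqref{eq:xu} for $g_\lambda$ reduce to the corresponding identities for $g$, which hold by hypothesis.

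The heart of the argument is condition (3). Let $\nabla^{\theta,\lambda}$ denote the Weyl connection attached to $(g_\lambda,\theta)$ via \eqref{eq:LCP}. The key claim is that the restriction of the Weyl connection to $\mmu$ is insensitive to the deformation, namely $\nabla^{\theta,\lambda}_x u=\nabla^\theta_x u$ for all $x\in\mg$ and $u\in\mmu$. To see this I would compute $g_\lambda(\nabla^{\theta,\lambda}_x u,v)$ for $v\in\mmu$ directly from \eqref{eq:LCP}: every extra term produced by the $\lambda\,\theta\otimes\theta$ correction carries a factor $\theta(u)$, $\theta(v)$, or $\theta$ evaluated on a commutator, all of which vanish in the adapted case, so what remains is exactly $g(\nabla^\theta_x u,v)$. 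Since both $\nabla^{\theta,\lambda}_x u$ and $\nabla^\theta_x u$ lie in $\mmu$, where $g_\lambda=g$ is nondegenerate, this forces $\nabla^{\theta,\lambda}_x u=\nabla^\theta_x u$.

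With this identity in hand, condition (3) is immediate: the linear map $x\mapsto\nabla^{\theta,\lambda}_x|_\mmu$ literally coincides with $x\mapsto\nabla^\theta_x|_\mmu$, which is a Lie algebra representation by hypothesis, and its target space is unchanged because $\so(\mmu)$ is the same for $g$ and $g_\lambda$ (these metrics agree on $\mmu$). Hence $\mmu$ is $\nabla^{\theta,\lambda}$-flat and $(\mg,g_\lambda,\theta,\mmu)$ is an adapted LCP structure. I expect the only delicate point to be the careful bookkeeping in the computation of the restricted connection in the third step, ensuring that every correction term is accounted for and seen to vanish, rather than any conceptual obstacle.
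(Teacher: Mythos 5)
Your proposal is correct and follows essentially the same route as the paper: verify conditions (1)--(3) of Proposition \ref{pro:algLCP}, noting that $g_\lambda$ agrees with $g$ whenever one argument lies in $\mmu$ or in $\mg'$, so that the $g_\lambda$-orthogonal of $\mmu$ is unchanged, the correction terms in \eqref{eq:xu} vanish, and the restriction of the new Weyl connection to $\mmu$ coincides with that of $\nabla^\theta$. The only (harmless) implicit step is that $\nabla^{\theta,\lambda}_x u\in\mmu$, which follows from the already-verified conditions (1) and (2) via the converse direction of Proposition \ref{pro:algLCP}, exactly as the paper also uses.
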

\begin{proof} Since $(\mg,g,\theta,\mmu)$ is adapted (i.e. $\theta|_\mmu=0)$), the $g$-orthogonal of $\mmu$ coincides with the $g_\lambda$-orthogonal of $\mmu$. Condition (1) in Proposition \ref{pro:algLCP} is thus verified for $g_\lambda$. Next, for $u\in\mmu$ and $x\in\mmu^\perp$, we have by \eqref{eq:xu} and \eqref{eq:tg}
$$g_\lambda([u,x],x)=g([u,x],x)+\theta([u,x])\theta(x)=\theta(u)|x|^2_g=0=\theta(u)|x|^2_{g_\lambda}$$
and 
$$g_\lambda([x,u],u)=g([x,u],u)+\theta([x,u])\theta(u)=\theta(x)|u|^2_g=\theta(x)|u|^2_{g_\lambda},$$
thus showing that Condition (2) in Proposition \ref{pro:algLCP} holds for $g_\lambda$. 

Finally, in order to check Condition (3) in Proposition \ref{pro:algLCP}, let us denote by $\nabla^\lambda$ the Weyl connection $\nabla^{g_\lambda}+\Theta$ on $(\mg,g_\lambda)$, where $\Theta$ is defined by \eqref{eq:na}. Since $g$ and $g_\lambda$ coincide when one of the arguments is in $\mmu$ or in $\mg'$, \eqref{eq:LCP} applied to $z\in \mmu$ immediately yields 
\begin{equation}\label{eq:gl}g(\nabla^\lambda_xy,z)=g_\lambda(\nabla^\lambda_xy,z)=g(\nabla^\theta_xy,z).\end{equation}
This shows that $\mmu^\perp$ is invariant by $\nabla^\lambda_x$ for every $x\in\mg$, so $\mmu$ is invariant too, as $\nabla^\lambda$ is a Weyl structure. Moreover, \eqref{eq:gl} shows that the restriction of $\nabla^\lambda_x$ to $\mmu$ is equal to the restriction of $\nabla^\theta_x$ to $\mmu$ for every $x\in\mg$, so Condition (3) in Proposition \ref{pro:algLCP} holds.
\end{proof}

\medskip

\section{Constructions of LCP structures}

In this section we introduce two different methods to construct LCP structures on Lie algebras. Whilst the first one builds on pre-existing LCP structures, the second one defines  LCP structures on Lie algebras obtained as semidirect products.

\subsection{Products of adapted LCP Lie algebras}

Recall (Definition \ref{defi}) that an LCP Lie algebra $(\mg,g,\theta,\mmu)$ is called adapted if $\theta|_\mmu=0$. In this setting, the extension procedure of Flamencourt \cite[Definition 3.9]{Fl22} can be stated as follows:

\begin{pro} \label{pro:direct}If $(\mg,g,\theta,\mmu)$ is an adapted LCP  Lie algebra and $(\mk,k)$ is an arbitrary metric Lie algebra, then $(\tilde \mg:=\mg\oplus\mk,\,\tilde g:=g+k,\,\tilde\theta:=\theta,\,\tilde\mmu:=\mmu)$ is again an LCP Lie algebra.
\end{pro}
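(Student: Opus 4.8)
The plan is to verify directly the three conditions characterising an LCP structure in Proposition \ref{pro:algLCP}, applied to the quadruple $(\tilde\mg,\tilde g,\tilde\theta,\tilde\mmu)$. First I would fix notation: since $\tilde\mg=\mg\oplus\mk$ is a direct sum of Lie algebras, one has $[\mg,\mk]=0$; the metric $\tilde g=g+k$ makes $\mg$ and $\mk$ orthogonal; the form $\tilde\theta=\theta$ is extended by zero on $\mk$; and $\tilde\mmu=\mmu\subset\mg$. A short computation gives $\tilde\mmu^\perp=\mmu^\perp\oplus\mk$, where $\mmu^\perp$ denotes the $g$-orthogonal complement of $\mmu$ in $\mg$. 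Condition (1) is then immediate: $\tilde\mmu=\mmu$ is a subalgebra of $\mg$ by Proposition \ref{pro:algLCP} applied to the original structure, hence a subalgebra of $\tilde\mg$; and $\tilde\mmu^\perp=\mmu^\perp\oplus\mk$ is a subalgebra because $\mmu^\perp$ is a subalgebra of $\mg$, $\mk$ is a subalgebra of itself, and $[\mmu^\perp,\mk]\subset[\mg,\mk]=0$.

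For condition (2), take $u\in\mmu$ and write $x=x_0+w\in\tilde\mmu^\perp$ with $x_0\in\mmu^\perp$ and $w\in\mk$. Using $[\mg,\mk]=0$ one gets $[u,x]=[u,x_0]\in\mg$ and $[x,u]=[x_0,u]\in\mg$, so both sides of \eqref{eq:xu} reduce to expressions involving only $\mg$-components, where the corresponding identities for the original LCP structure apply. The essential point — and the place where the adapted hypothesis is used — is the first identity: one finds $\tilde g([u,x],x)=g([u,x_0],x_0)=\theta(u)|x_0|^2$, whereas $\tilde\theta(u)|x|^2=\theta(u)(|x_0|^2+|w|^2)$; these agree only because adaptedness forces $\theta(u)=0$, so both sides vanish. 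The second identity follows directly from \eqref{eq:xu} together with $\tilde\theta(x)=\theta(x_0)$ and $|u|^2_{\tilde g}=|u|^2_g$. By Proposition \ref{pro:algLCP}, conditions (1) and (2) now guarantee that $\tilde\mmu$ is $\nabla^{\tilde\theta}$-parallel, so that $\nabla^{\tilde\theta}_x|_\mmu$ is a well-defined endomorphism of $\mmu$ for every $x\in\tilde\mg$.

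The remaining step, condition (3), is the one I expect to carry the substance. Here I would compute $\nabla^{\tilde\theta}_x|_\mmu$ explicitly from \eqref{eq:LCP}, evaluating $\tilde g(\nabla^{\tilde\theta}_xu,z)$ for $u,z\in\mmu$; since $\theta$ vanishes on $\mmu$, the terms in $\tilde\theta(u)$ and $\tilde\theta(z)$ drop out. For $x\in\mg$ every bracket stays in $\mg$ and $\tilde g$ restricts to $g$, yielding $\nabla^{\tilde\theta}_x|_\mmu=\nabla^\theta_x|_\mmu$; for $x=w\in\mk$ the brackets $[w,u]$ and $[w,z]$ vanish while $[u,z]\in\mg$ is $\tilde g$-orthogonal to $w$, whence $\nabla^{\tilde\theta}_w|_\mmu=0$. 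Thus the map $\rho:=\nabla^{\tilde\theta}|_\mmu$ agrees with the original representation $\nabla^\theta|_\mmu$ on $\mg$ and vanishes on $\mk$, and in particular takes values in $\so(\mmu)\oplus\R\Id_\mmu$. Checking that $\rho$ is a Lie algebra homomorphism then reduces, by bilinearity, to three cases: for $x,y\in\mg$ it is exactly condition (3) for the original structure; for $x\in\mg$, $y\in\mk$ both $\rho([x,y])$ and $[\rho(x),\rho(y)]$ vanish because $[\mg,\mk]=0$ and $\rho(\mk)=0$; and for $x,y\in\mk$ both sides vanish because $[\mk,\mk]\subset\mk$ and $\rho(\mk)=0$. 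This establishes condition (3) and completes the verification, the only genuinely computational point being the evaluation of $\nabla^{\tilde\theta}|_\mmu$ on the two factors, with the adapted hypothesis doing the essential work in condition (2).
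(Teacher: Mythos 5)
Your proposal is correct and follows essentially the same route as the paper: both verify conditions (1)--(3) of Proposition \ref{pro:algLCP}, use adaptedness precisely in the first identity of \eqref{eq:xu}, and establish (3) by showing that $\nabla^{\tilde\theta}|_{\tilde\mmu}$ is the extension by zero on $\mk$ of $\nabla^\theta|_\mmu$. Your only addition is spelling out the case-by-case check that this extension is a homomorphism, which the paper leaves implicit.
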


\begin{proof} We will check that $(\tilde \mg,\tilde g,\tilde\theta,\tilde\mmu)$ satisfies the three conditions in Proposition \ref{pro:algLCP}, using the fact that $(\mg,g,\theta,\mmu)$ satisfies them.

By Condition (1), $\mmu$ and $\mmu^\perp$ are subalgebras of $\mg$, so $\mmu$ and its orthogonal complement $\tilde\mmu^\perp:=\mmu^\perp\oplus\mk$ in $\tilde\mg$ are subalgebras of $\tilde\mg$. 

Equation \eqref{eq:xu} holds for every $u\in\mmu$ and $x\in\mmu^\perp$. Moreover, since the LCP structure is adapted, $\theta|_\mmu=0$, so the first part in \eqref{eq:xu} reads $g([u, x],x)=0$. For every $\tilde x$ in $\tilde\mmu^\bot$ written as $\tilde x=x+y$ with $x\in\mmu^\perp$ and $y\in\mk$ we have $[u,\tilde x]=[u,x]$, so
$$\tilde g([u,\tilde x],\tilde x)=\tilde g([u, x],\tilde x)=g([u, x],x)=0=\theta(u)|\tilde x|_{\tilde g}^2.$$
Moreover, from the second part of \eqref{eq:xu} we get
$$\tilde g([\tilde x,u],u)=g([x,u],u)=\theta(x)|u|^2=\tilde\theta(\tilde x)|u|_{\tilde g}^2,$$
showing that $(\tilde \mg,\tilde g,\tilde\theta,\tilde\mmu)$ satisfies Condition (2).

Finally, applying \eqref{eq:LCP} to the Weyl connection $\nabla^{\tilde\theta}$ on the metric algebra $(\tilde\mg,\tilde g)$, we get $\tilde g(\nabla^{\tilde\theta}_xy,z)=0$ for every $x\in \mk$ and $y,z\in\mmu$. This shows that the map $\nabla^{\tilde\theta}:\tilde\mg\to\gl(\tilde \mmu)$ is the extension by $0$ on $\mk$ of the map $\nabla^{\theta}:\mg\to\gl(\mmu)$, so it is a Lie algebra representation.
\end{proof}

  In the context of Lie algebras, this construction can be generalised as follows.

Let $(\mg_i,g_i,\theta_i,\mmu_i)$, $i=1,2$, be adapted LCP Lie algebras. Consider the direct sum of these Lie algebras $\mg_1\oplus\mg_2$ endowed with the inner product $g_1+g_2$.

The 1-forms $\theta_1$, $\theta_2$ extend in an obvious way to closed 1-forms on  $\mg_1\oplus\mg_2$.  It is easy to check that $\mg:=\ker (\theta_1-\theta_2)$ is a codimension one ideal of $\mg_1\oplus\mg_2$. Indeed,  $\theta_i(\mg_1'\oplus\mg_2')=\theta_i((\mg_1\oplus\mg_2)')=0$ for $i=1,2$, since both $\theta_1$ and $\theta_2$ are closed, which implies $\mg_1'\oplus\mg_2'\subset \mg$. We denote by $g:=(g_1+g_2)|_\mg$. In addition, $\theta:=\theta_1|_\mg=\theta_2|_\mg$ is a non-zero 1-form in $\mg$, which is closed because $\theta(\mg_1'\oplus\mg_2')=0$. Finally, the fact that both LCP Lie algebras are adapted implies $\mmu:=\mmu_1\oplus \mmu_2\subset \mg$. Note that $\theta|_\mmu=0$, since if $u_i\in\mmu_i$, $i=1,2$, then $\theta(u_i)=\theta_i(u_i)=0$ due to the definition of $\theta$ and the fact that both LCP Lie algebras are adapted.

\begin{pro}\label{pro:amal} For every adapted LCP Lie algebras $(\mg_i,g_i,\theta_i,\mmu_i)$, $i=1,2$, the quadruple $(\mg,g,\theta,\mmu)$ defined above is an adapted LCP Lie algebra.
Moreover, if $\mg_i$ is unimodular (respectively solvable) for $i=1,2$, then so is $\mg$.
\end{pro}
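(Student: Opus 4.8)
The plan is to verify directly the three conditions of Proposition \ref{pro:algLCP} for the quadruple $(\mg,g,\theta,\mmu)$, working componentwise inside $\mg_1\oplus\mg_2$ and exploiting two facts repeatedly: first, that both structures are adapted, so $\theta_i|_{\mmu_i}=0$; and second, the defining property of the amalgam, namely that an element $x=(x_1,x_2)$ lies in $\mg=\ker(\theta_1-\theta_2)$ precisely when $\theta_1(x_1)=\theta_2(x_2)$, in which case this common value equals $\theta(x)$. This single equality is what makes the componentwise contributions assemble correctly, and it is the conceptual heart of the construction.

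For Condition (1) I would note that each $\mmu_i$ is a subalgebra of $\mg_i$ (Condition (1) for the factors), so $\mmu=\mmu_1\oplus\mmu_2$ is a subalgebra of $\mg_1\oplus\mg_2$ contained in $\mg$, hence a subalgebra of $\mg$. For the orthogonal complement, since $\mmu\subset\mg$ one has $\mmu^\perp=\mg\cap(\mmu_1^\perp\oplus\mmu_2^\perp)$, where $\mmu_i^\perp$ denotes the orthogonal of $\mmu_i$ in $\mg_i$; as $\mmu_1^\perp\oplus\mmu_2^\perp$ and the ideal $\mg$ are both subalgebras of $\mg_1\oplus\mg_2$, their intersection is again a subalgebra. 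For Condition (2) I would take $u=(u_1,u_2)\in\mmu$ and $x=(x_1,x_2)\in\mmu^\perp$ (so $x_i\in\mmu_i^\perp$) and expand both identities of \eqref{eq:xu} over the two factors. Using $\theta_i|_{\mmu_i}=0$, the first identity reduces to $g_i([u_i,x_i],x_i)=0$ summand by summand, which is Condition (2) for each factor. The second identity gives $g([x,u],u)=\theta_1(x_1)|u_1|^2+\theta_2(x_2)|u_2|^2$, and here the amalgamation enters: since $x\in\mg$ one has $\theta_1(x_1)=\theta_2(x_2)=\theta(x)$, so the right-hand side collapses to $\theta(x)|u|^2$, as required.

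For Condition (3) the key computation is to identify the restricted Weyl operator. Using \eqref{eq:LCP} together with $\theta|_\mmu=0$ and the common value $\theta_1(x_1)=\theta_2(x_2)=\theta(x)$ for $x\in\mg$, I would show that for all $x=(x_1,x_2)\in\mg$
\[
\nabla^\theta_x|_\mmu=\nabla^{\theta_1}_{x_1}|_{\mmu_1}\oplus\nabla^{\theta_2}_{x_2}|_{\mmu_2},
\]
by checking the equality of the associated bilinear forms on $\mmu$ summand by summand. Granting this, the map $x\mapsto\nabla^\theta_x|_\mmu$ is the restriction to the subalgebra $\mg$ of the product map $(x_1,x_2)\mapsto\nabla^{\theta_1}_{x_1}|_{\mmu_1}\oplus\nabla^{\theta_2}_{x_2}|_{\mmu_2}$, which is a Lie algebra homomorphism because each factor is one (Condition (3) for the factors) and the two images act on $g$-orthogonal subspaces. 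A restriction of a representation to a subalgebra is again a representation, so Condition (3) holds and $(\mg,g,\theta,\mmu)$ is LCP; it is adapted since $\theta|_\mmu=0$ by construction.

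Finally I would treat the algebraic properties. Solvability is immediate: $\mg_1\oplus\mg_2$ is solvable when both factors are, and $\mg$ is a subalgebra. Unimodularity is the one place where a separate idea is needed, since it is not inherited by arbitrary subalgebras; this is the main (though mild) obstacle. Here I would use that $\mg$ is a codimension-one \emph{ideal} of the unimodular algebra $\mg_1\oplus\mg_2$: for $x\in\mg$ the operator $\ad_x$ preserves $\mg$ and sends any complement into $\mg$, hence acts trivially on the one-dimensional quotient, so $\tr(\ad^\mg_x)=\tr(\ad^{\mg_1\oplus\mg_2}_x)=0$. This proves $\mg$ unimodular and completes the argument.
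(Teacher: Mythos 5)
Your proposal is correct and follows essentially the same route as the paper: verifying conditions (1)--(3) of Proposition \ref{pro:algLCP} componentwise, using $\theta_i|_{\mmu_i}=0$ and the defining relation $\theta_1(x_1)=\theta_2(x_2)=\theta(x)$ on $\mg$, and identifying $\nabla^\theta|_\mmu$ as the restriction of the product representation. Your unimodularity argument via the codimension-one ideal containing the derived algebra is just a more explicit spelling-out of the paper's remark that $\mg\supset(\mg_1\oplus\mg_2)'$ forces unimodularity to be inherited.
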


\begin{proof}
We shall use Proposition \ref{pro:algLCP}. Since $\mmu_i$ is a subalgebra of $\mg_i$ we have that $\mmu=\mmu_1\oplus \mmu_2$ is a subalgebra of $\mg_1\oplus \mg_2$. Moreover, as $\mmu\subset\mg$, it is clear that $\mmu$ is a subalgebra of $\mg$. On the other hand, let us denote $\mmu^\perp$ the orthogonal complement of $\mmu$ in $\mg$. Then $\mmu^\perp =(\mmu_1^\perp \oplus \mmu_2^\perp)\cap \mg$, which is a subalgebra of $\mg$ since $\mmu_i^\perp$ is a subalgebra of $\mg_i$ for each $i$. Hence, Condition (1) in Proposition \ref{pro:algLCP} holds for $(\mg,g,\theta,\mmu)$. 

Next, let $u=u_1+u_2\in\mmu$ and $x=x_1+x_2\in\mg$, where each subscript indicates the component in $\mg_1$ and $\mg_2$, respectively. Then:
\begin{align*}
g([u,x],x) & = g([u_1+u_2,x_1+x_2],x_1+x_2) \\
& = g([u_1,x_1],x_1)+g([u_2,x_2],x_2)\\
& = \theta_1(u_1)|x_1|^2+\theta_2(u_2)|x_2|^2 \qquad \text{(by Proposition \ref{pro:algLCP}(2))}\\
& = 0 \qquad (\text{since}\ \theta_i|_{\mmu_i}=0)\\
& =\theta(u)|x|^2 \qquad (\text{since}\ \theta|_\mmu=0),
\end{align*}
and 
\begin{align*}
g([x,u],u) & = g([x_1+x_2,u_1+u_2],u_1+u_2) \\
& = g([x_1,u_1],u_1)+g([x_2,u_2],u_2)\\
& = \theta_1(x_1)|u_1|^2+\theta_2(x_2)|u_2|^2 \qquad \text{(by Proposition \ref{pro:algLCP}(2))}\\
& = \theta(x)|u|^2 \qquad (\text{since}\ \theta(x)=\theta_1(x_1)=\theta_2(x_2)).
\end{align*}
Thus, Condition (2) in Proposition \ref{pro:algLCP} holds for $(\mg,g,\theta,\mmu)$.

Finally, we check Condition (3) in Proposition \ref{pro:algLCP}. 
For every $x_1+x_2\in\mg$, $u_1,v_1\in\mmu_1$ and $u_2,v_2\in\mmu_2$ we have $\theta(x_1+x_2)=\theta_1(x_1)=\theta_2(x_2)$ and $\theta_i(u_i)=\theta_i(v_i)=0$. Therefore \eqref{eq:LCP} shows immediately that
\[ (g_1+g_2)(\nabla^\theta_{x_1+x_2}(u_1+u_2),v_1+v_2)=g_1(\nabla^{\theta_1}_{x_1}u_1,v_1)+g_2(\nabla^{\theta_2}_{x_2}u_2,v_2),\]
and this shows that $\nabla^\theta$ is the restriction to $\mg$ of the representation 
\[ \nabla^{\theta_1}+\nabla^{\theta_2}:\mg_1\oplus\mg_2\to \gl(\mmu_1)\oplus\gl(\mmu_2)\subset \gl(\mmu).\] 
Thus, we have proved that $(\mg,g,\theta,\mmu)$ is an LCP Lie algebra.

The last assertions follow immediately from the fact that if both $\mg_1$ and $\mg_2$ are unimodular or solvable then $\mg_1\oplus \mg_2$ has the same property. Solvability is inherited by subalgebras, and since the derived algebra of $\mg_1\oplus \mg_2$ is contained in $\mg$, the latter inherits the unimodularity property.
\end{proof}

The LCP Lie algebra constructed in Proposition \ref{pro:amal} is called the {\em amalgamated product} of the adapted LCP Lie algebras $(\mg_1,g_1,\theta_1,\mmu_1)$ and $(\mg_2,g_2,\theta_2,\mmu_2)$.

It turns out that the direct product of an LCP Lie algebra with an arbitrary metric Lie algebra (which is LCP by Proposition \ref{pro:direct}) is in some sense a special case of amalgamated product. 

To see this, let $(\mg_1,g_1,\theta_1,\mmu_1)$ be an adapted LCP Lie algebra and let $(\mh,h)$ be any metric Lie algebra. Consider $\mg_2:=\R b \oplus\mh$ with the scalar product $g_2$ which extends $h$ and makes $b\bot\mh$ and $b$ of unit norm. Then $(\mg_2,g_2,\theta_2,\mmu_2)$ is a degenerate (and thus adapted) LCP Lie algebra, with respect to the closed 1-form $\theta_2:=g_2(b,\cdot)$  and $\mmu_2:=0$.

Let $(\mg,g,\theta,\mmu)$ be the amalgamated product of $(\mg_1,g_1,\theta_1,\mmu_1)$ and $(\mg_2,g_2,\theta_2,\mmu_2)$.
We define $f:\mg_1\oplus\mh\to \mg$ by
$$f((x_1,x_2)):=x_1+\theta_1(x_1)b+x_2.$$
This is a Lie algebra isomorphism (since $\theta_1$ is closed), but not an isometry, since $f^*g=(g_1+\theta_1\otimes\theta_1)+h$. This means that the direct product LCP structure $(g_1+h,\theta_1,\mmu_1)$ on $\mg_1\oplus \mh$ coincides with the pull-back by $f$ of the amalgamate LCP structure on $\mg$ up to adding an extra factor $\theta_1\otimes\theta_1$ to the metric, which by Corollary \ref{cor:modif} does not change the property of being LCP.

\subsection{LCP structures on semidirect products.}

Motivated by a result which will be proved in the next section, we give here a general construction procedure for LCP Lie algebras.

\begin{pro}\label{pro:cnstr}
Let $(\mh,h)$ be a non-unimodular metric Lie algebra with trace form $H^\mh\in \mh^*$ and let $q\ge 1$ be an integer. Assume that $\beta: \mh\to \so(\R^q)$ is a Lie algebra representation such that $\beta|_{\mh'}=0$. 

Then $\alpha: \mh\to \gl(\R^q)$ defined by $\alpha(x):=-\frac1qH^\mh(x)\Id_{\R^q}+\beta(x)$ for $x\in\mh$ is a Lie algebra representation, and $(\mg,g,\theta,\mmu)$ is a non-degenerate unimodular LCP Lie algebra for $\mg:= \mh\ltimes_{\alpha}\R^q,\ g:=\langle\cdot,\cdot\rangle_{\R^q}+ h,\ \theta:=-\frac1qH^\mh$ (extended by $0$ on $\R^q$), and $\mmu:=\R^q$.

Moreover, $\mg$ is solvable if and only if $\mh$ is solvable.
\end{pro}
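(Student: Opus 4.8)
The plan is to deduce the LCP property from the characterization in Proposition \ref{pro:algLCP}, after first checking the two purely algebraic assertions: that $\alpha$ is a representation and that $\mg$ is unimodular.

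First I would verify that $\alpha$ is a Lie algebra homomorphism. Since $H^\mh$ is closed it vanishes on $[\mh,\mh]$, and $\beta|_{\mh'}=0$ by hypothesis, so $\alpha([x,y])=0$ for all $x,y\in\mh$; on the other hand the scalar part $-\frac1qH^\mh(\cdot)\Id_{\R^q}$ is central in $\gl(\R^q)$, whence $[\alpha(x),\alpha(y)]=[\beta(x),\beta(y)]=\beta([x,y])=0$. Thus $\alpha([x,y])=[\alpha(x),\alpha(y)]$, both sides vanishing, and $\alpha$ has abelian image. For unimodularity I would compute $\tr(\ad_z)$ blockwise on $\mg=\mh\oplus\R^q$: for $z=v\in\R^q$ the map $\ad_v$ sends $\mh$ into $\R^q$ and annihilates $\R^q$, hence is trace-free; for $z=x\in\mh$ one gets $\tr(\ad_x)=H^\mh(x)+\tr(\alpha(x))$, and since $\beta(x)\in\so(\R^q)$ is trace-free, $\tr(\alpha(x))=-H^\mh(x)$, so $\tr(\ad_x)=0$. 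Hence $H^\mg=0$. Note also that $\theta=-\frac1qH^\mh\ne0$ because $\mh$ is non-unimodular and $\mmu=\R^q\ne0$ because $q\ge1$, so the structure will be non-degenerate.

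For the LCP property I would apply Proposition \ref{pro:algLCP} with $\mmu=\R^q$ and $\mmu^\perp=\mh$. Condition (1) is immediate ($\R^q$ is abelian, $\mh$ is the base subalgebra). Condition (2) is a short computation: the first identity in \eqref{eq:xu} reads $0=0$, since $[u,x]\in\R^q$ is orthogonal to $x\in\mh$ and $\theta|_{\R^q}=0$, while the second reduces, via the skew-symmetry of $\beta(x)$, to $g([x,u],u)=g(\alpha(x)u,u)=-\frac1qH^\mh(x)|u|^2=\theta(x)|u|^2$. By Proposition \ref{pro:algLCP}, Conditions (1)--(2) already force $\mmu$ to be $\nabla^\theta$-parallel, so only Condition (3) remains. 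Since $\mmu$ is parallel, $\nabla^\theta_z|_\mmu$ is recovered by pairing against $\R^q$ alone; one finds $\nabla^\theta_v|_\mmu=0$ for $v\in\R^q$ (all relevant terms of \eqref{eq:LCP} vanish) and, for $x\in\mh$, the identity $\nabla^\theta_x|_\mmu=\alpha(x)$.

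The crux of the whole argument, and the only place where the precise shape of the data matters, is this last identity: in \eqref{eq:LCP} the antisymmetric bracket combination produces exactly the skew part $\beta(x)$ (the symmetric scalar part cancelling in the difference), while the Lee-form term contributes the scalar $-\frac1qH^\mh(x)\Id$, so the two reassemble into $\alpha(x)=-\frac1qH^\mh(x)\Id+\beta(x)$; it is precisely the calibration $\theta=-\frac1qH^\mh$ together with the splitting of $\alpha$ into scalar plus skew that makes this work. Once $\nabla^\theta_z|_\mmu=\alpha(\pr_\mh z)$ is established, Condition (3) follows: this map takes values in $\so(\mmu)\oplus\R\Id_\mmu$ by construction, and it is a homomorphism because both $\pr_\mh:\mg\to\mh$ and $\alpha$ are. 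This proves that $(\mg,g,\theta,\mmu)$ is a non-degenerate unimodular LCP structure. Finally, the solvability claim is immediate: $\R^q$ is an abelian ideal of $\mg$ with quotient isomorphic to $\mh$, so $\mg$ is solvable exactly when $\mh$ is, solvability passing to subalgebras and being stable under extension by an abelian ideal.
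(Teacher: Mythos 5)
Your proposal is correct and follows essentially the same route as the paper: verify that $\alpha$ is a representation and that $\mg$ is unimodular by blockwise trace computations, then check conditions (1)--(3) of Proposition \ref{pro:algLCP}, with the key step being the identity $\nabla^\theta_x|_\mmu=\ad_x|_\mmu=\alpha(\pr_\mh x)$ derived from \eqref{eq:LCP}. The only cosmetic difference is that you conclude condition (3) from $\alpha\circ\pr_\mh$ being a composition of homomorphisms, whereas the paper invokes the Jacobi identity for $\ad|_\mmu$ on the ideal $\mmu$ — these are the same observation.
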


\begin{proof} 
Since $\beta$ as well as the closed 1-form $\theta$ vanish on $\mh'$, we get 
\begin{equation}\label{eq:gg}
    \alpha(\mg')=0.
\end{equation}
Thus for every $x\in\mg$ we can write
$$[\alpha(x),\alpha(y)]=[\beta(x),\beta(y)]=\beta([x,y])=0=\alpha([x,y]).$$
This shows that $\alpha$ is a Lie algebra representation, so $\mg:=\mh\ltimes_{\alpha} \R^q$ is well defined.

By the definition of a semidirect product, the bracket on $\mg$ satisfies
\begin{equation}
   \ad_x|_{\R^q}=\alpha(x)=-\frac1qH^\mh(x)\Id_{\R^q}+\beta(x)\qquad \forall\; x\in\mh.\label{eq:xht}
\end{equation}

For every $u\in\R^q$ we clearly have $\tr(\ad_u)=0$ since $\ad_u(\R^q)=0$ and $\ad_u(\mh)\subset \R^q$. For $x\in \mh$ we compute using \eqref{eq:xht} and the fact that $\beta(x)\in\so(q)$:
$$\tr\ad_x=\tr(\ad_x|_\mh)+\tr(\ad_x|_{\R^q})=H^\mh(x)+\tr(-\frac1qH^\mh(x)\Id_{\R^q}+\beta(x))=0.$$
Thus $\mg$ is unimodular.

In order to prove that $(\mg,g,\theta,\mmu)$ is an LCP Lie algebra, we need to check the conditions (1)--(3) from Proposition \ref{pro:algLCP}. 

Since $\mmu=\R^k$ is an ideal of $\mg$ and $\mmu^\perp=\mh$ is a subalgebra, (1) is clearly verified. 

For $u\in\mmu$ and $x\in\mmu^\perp$ we have $[u,x]\in\mmu$, $\theta(u)=0$, and 
$$g([x,u],u)=g(\alpha(x)(u),u)=g(\theta(x)u+\beta(x)(u),u)=\theta(x)|u|^2,$$
(since $\beta(x)\in\so(q)$), thus proving (2). 

We now claim that
\begin{equation}\label{eq:nab}
g(\nabla^\theta_xy,z)=g([x,y],z)\qquad \forall\; y,z\in\mmu, \, x\in\mg.
\end{equation}
For $x\in\mmu$ both terms vanish because $\mmu$ is an abelian ideal of $\mg$ and $\theta|_\mmu=0$. For $x\in\mmu^\perp$ we have by \eqref{eq:LCP}
\begin{eqnarray*}
g(\nabla^\theta_xy,z)&=&\frac12(g(\alpha(x)(y),z)-g(\alpha(x)(z),y))+\theta(x)g(y,z)\\&=&g(\beta(x)(y),z)+\theta(x)g(y,z)\\&=&g([x,y],z).
\end{eqnarray*}

On the other hand, from the proof of Proposition \ref{pro:algLCP}, (1) and (2) show that $\mmu$ is $\nabla^\theta$-parallel, and since $\mmu$ is an ideal of $\mg$, \eqref{eq:nab} implies $\nabla^\theta_x|_\mmu=\ad_x|_\mmu$ for every $x\in \mg$, so (3) follows directly from the Jacobi identity.

For the last statement we notice that $\mg'\subset \R^q\oplus\mh'$ and by \eqref{eq:gg} we have $[\mg',\mg']\subset[\mh',\mh']$. Thus if $\mh$ is solvable, $\mg$ is solvable as well. Conversely, since $\mh$ is a Lie subalgebra of $\mg$, it is solvable whenever $\mg$ is solvable.
\end{proof}

\begin{remark}
 The construction above shows that any non-unimodular Lie algebra  $\mh$ is a codimension 1 subalgebra of an LCP Lie algebra. Indeed, it suffices to take $q=1$ and $\beta=0$ in Proposition \ref{pro:cnstr}.
\end{remark} 

The proposition above gives a criterion for the existence of non-degenerate LCP structures on a given unimodular Lie algebra. 

\begin{cor}\label{cor:converse}Let $\mg$ be an unimodular Lie algebra carrying a non-unimodular subalgebra $\mh$ and an abelian ideal $\mmu$ of dimension $q\ge 1$ such that $\mg=\mh\oplus \mmu$. Consider the closed $1$-form $\theta:=-\frac1{q}H^\mh$, extended to $\mg$ by $0$ on $\mmu$. Assume that $\ad_x|_\mmu=0$ for every $x\in\mh'$ and that there exists a scalar product $g_\mmu$ on $\mmu$ such that $\ad_x|_\mmu-\theta(x)\Id_\mmu\in\so(\mmu)$ for every $x\in \mh$. Then for every scalar product $g_\mh$ on $\mh$, $(\mg,g_\mh+g_\mmu,\theta,\mmu)$ is a non-degenerate LCP Lie algebra. 
\end{cor}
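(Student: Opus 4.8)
The plan is to recognise the given data as a special instance of the semidirect-product construction of Proposition~\ref{pro:cnstr}, and then to invoke that proposition directly. The starting observation is that since $\mmu$ is an abelian ideal and $\mh$ is a complementary subalgebra, the whole bracket of $\mg=\mh\oplus\mmu$ is determined by the bracket of $\mh$ together with the linear map $\alpha\colon\mh\to\gl(\mmu)$ defined by $\alpha(x):=\ad_x|_\mmu$; in other words $\mg=\mh\ltimes_\alpha\mmu$. Because $\mmu$ is an ideal, the Jacobi identity restricted to $\mmu$ shows that $\alpha$ is automatically a Lie algebra representation, so no extra work is needed there.

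The main step is to recover the orthogonal representation $\beta$ required by Proposition~\ref{pro:cnstr}. I would set
$$\beta(x):=\alpha(x)-\theta(x)\Id_\mmu=\ad_x|_\mmu+\tfrac1q H^\mh(x)\Id_\mmu,\qquad x\in\mh,$$
and verify its three defining properties. First, $\beta(x)\in\so(\mmu)$ for all $x\in\mh$ is exactly the second hypothesis of the corollary. Second, $\beta$ is a representation: since $\Id_\mmu$ is central in $\gl(\mmu)$ we get $[\beta(x),\beta(y)]=[\alpha(x),\alpha(y)]=\alpha([x,y])$, while $\theta([x,y])=0$ because $H^\mh$ annihilates $\mh'$ (it is closed, as recorded earlier in the text), so also $\beta([x,y])=\alpha([x,y])$. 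Third, $\beta|_{\mh'}=0$: for $x\in\mh'$ the first hypothesis gives $\ad_x|_\mmu=0$ and the closedness of $H^\mh$ gives $\theta(x)=0$, hence $\beta(x)=0$.

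Once $\beta$ is in place, the identity $\alpha(x)=-\tfrac1q H^\mh(x)\Id_\mmu+\beta(x)$ is precisely the formula appearing in Proposition~\ref{pro:cnstr}. Choosing a $g_\mmu$-orthonormal basis of $\mmu$ identifies $(\mmu,g_\mmu)$ isometrically with $\R^q$ carrying the standard inner product, identifies $\beta$ with a representation $\mh\to\so(\R^q)$, and identifies $g_\mh+g_\mmu$ with the product metric $\langle\cdot,\cdot\rangle_{\R^q}+g_\mh$. Proposition~\ref{pro:cnstr} then yields at once that $(\mg,\,g_\mh+g_\mmu,\,\theta,\,\mmu)$ is a non-degenerate LCP structure, for an arbitrary choice of $g_\mh$.

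I do not anticipate a genuine obstacle: the content is essentially bookkeeping to match the hypotheses of an already-proved proposition. The only point deserving a moment's care is the multiplicativity of $\beta$, i.e.\ checking that the $\Id_\mmu$-correction does not spoil the representation property; this works precisely because $\theta$ vanishes on $\mh'$, which is exactly where the closedness of the trace form $H^\mh$ is used. I would also remark that the standing assumption that $\mg$ is unimodular is in fact redundant, since Proposition~\ref{pro:cnstr} already outputs a unimodular Lie algebra, but including it is harmless.
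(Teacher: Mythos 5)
Your proposal is correct and follows essentially the same route as the paper: write $\mg=\mh\ltimes_\alpha\mmu$ with $\alpha(x)=\ad_x|_\mmu$, set $\beta(x):=\alpha(x)-\theta(x)\Id_\mmu$, check that $\beta$ is an orthogonal representation vanishing on $\mh'$ (using closedness of $H^\mh$ and the hypothesis $\ad_x|_\mmu=0$ for $x\in\mh'$), and invoke Proposition~\ref{pro:cnstr}. Your side remark that the unimodularity of $\mg$ is automatic is also accurate, since Proposition~\ref{pro:cnstr} produces a unimodular algebra from the remaining data.
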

\begin{proof}
By definition, we can write $\mg=\mh\ltimes_\alpha\mmu$, where $\alpha(x):=\ad_x|_\mmu$ for every $x\in\mh$. By assumption, there is a map $\mh\ni x\mapsto \beta(x):=\alpha(x)-\theta(x)\Id_\mmu\in\so(\mmu)$. We claim that $\beta$ is a Lie algebra representation vanishing on $\mh'$. Indeed, since $\theta$ is closed, it vanishes on $\mh'$, so for every $x,y\in\mh$ we have 
$$[\beta(x),\beta(y)]=[\alpha(x),\alpha(y)]=[\ad_x|_\mmu,\ad_y|_\mmu]=\ad_{[x,y]}|_\mmu=0=\beta([x,y]).$$
The statement thus follows directly from Proposition \ref{pro:cnstr}.
\end{proof}

In the remaining part of this section we will use Proposition \ref{pro:cnstr} in order to construct some explicit examples of LCP Lie algebras.

\begin{ex}\label{ex:almab}
Let  $A\in \gl(p)$ with $\tr(A)\neq 0$, $B\in \so(q)$, and set $\mh:=\R b\ltimes_A \R^p$ where, by a slight abuse of notation, $A$ denotes the representation of $\R  b$ on $\R^p$ sending $b$ to $A$; notice that $\mh$ is not unimodular but solvable. The standard inner product $\langle\cdot,\cdot\rangle_{\R^p}$ on $\R^p$ extends to an inner product $h$ on $\mh$ such that $h(b,\R^p)=0$ and $h(b,b)=1$. It is easy to check that $H^\mh(\cdot)=\tr(A)\, h(b,\cdot)$.

The linear map $\beta:\mh\to\so(q)$ defined by $\beta(b)=B$ and $\beta(\R^p)=0$, is clearly a Lie algebra representation vanishing on $\mh'$. 
According to Proposition \ref{pro:cnstr}, the Lie algebra $\mg:=\mh\ltimes_\alpha\R^q$ where $\alpha(x):=-\frac1q\tr(A) \,h(b,x)\Id_{\R^q}+\beta(x)$, for $x\in\mh$, is unimodular solvable  and $$\mathcal L_{(A,B)}:=(\mg,g,-\frac1q \tr( A)\, h(b,\cdot),\R^q)$$ is an LCP Lie algebra for $g:=h+\langle\cdot,\cdot\rangle_{\R^q}$. The Lie algebra $\mg$ is actually almost abelian, since $\R^p\oplus \R^q$ is an abelian ideal of codimension 1. Notice that the action of $b$ on this ideal is just
$$\ad_b|_{\R^p\oplus\R^q}=\left[\begin{array}{c|c}  
  A&\\ \hline 
 & B-\frac1q\tr( A)\Id_{\R^q} 
\end{array}\right].$$
\end{ex}

A slight modification of the previous example gives LCP structures on unimodular solvable Lie algebras which are not almost abelian.

\begin{ex}\label{ex:noalmab} 
Let $A\in \gl(p)$ with $\tr( A)\neq 0$, let $B_1,B_2\in \so(q)$ be  endomorphisms such that $B_2\neq 0$ and $[B_1,B_2]=0$, and $v\in\R^p$. Set $\mh:=\R b\ltimes \R^{p+1}$ where $\R^{p+1}=\R y\oplus \R^p$ and the semidirect product law satisfies 
\[
(\ad_b)|_{\R^p}=A, \quad [b,y]=v\in\R^p;
\] 
again $\mh$ is solvable and non-unimodular. Consider an inner product $h$ on $\mh$ such that $h(b,\R^p)=h(y,\R^p)=h(y,b)=0$ and $h(b,b)=h(y,y)=1$. As before, we have $H^\mh(\cdot)=\tr (A)\, h(b,\cdot)$.

The linear map $\beta:\mh\to\so(q)$ satisfying $\beta(b)=B_1$, $\beta(y)=B_2$ and $\beta(\R^p)=0$ is a Lie algebra representation vanishing on $\mh'$.

By Proposition \ref{pro:cnstr}, the Lie algebra $\mg=\mh\ltimes_\alpha \R^q$ with $\alpha(x):=-\frac1q\tr (A) \,h(b,x)\Id_{\R^q}+\beta(x)$ for every $x\in\mh$, is unimodular and solvable. Moreover, if  $g$ is the scalar product extending $h$ and making $\R^q\bot\mh$, then $$\mathcal C_{(A,B_1,B_2,v)}:=(\mg,g,-\frac1q \tr (A)\, h(b,\cdot),\R^q)$$ is an LCP Lie algebra.

Explicitly, the actions of $b$ and $y$ on $\mg$ satisfy
\begin{eqnarray*}
    &(\ad_b)|_{\R^p}=A, \quad [b,y]=v\in\R^p, \quad (\ad_b)|_{\R^q}=B_1-\frac1q\tr(A) \Id_{\R^q} \\
    &(\ad_y)|_{\R^p}=0, \quad [y,b]=-v\in\R^p, \quad (\ad_y)|_{\R^q}=B_2.\end{eqnarray*}

Notice that $\mg$ is not almost abelian. Indeed, assume $\mk$ is an abelian ideal of $\mg$ of codimension 1 and write $\mg=\R \xi\oplus \mk$ for some $\xi\in\mg$, then, $\mg'=[\R \xi\oplus \mk,\R \xi\oplus\mk]=[\R \xi,\mk]\subset \mk$. 

By definition of $\mg$, $[y,\R^p]=[\R^p\oplus\R^q,\R^p]=[\R^p\oplus\R^q,\R^q]=0$. So if an element of the form $\lambda b+\mu y+x$ is in $\mk$, for some $\lambda,\mu\in\R$, $x\in \R^{p+q}$, then for any $u\in\R^p$ such that $Au\neq 0$, we have $[b,u]=Au\in\mg'\subset \mk$ and thus
\[
0=[\lambda b+\mu y+x,u]=\lambda Au \Rightarrow \lambda=0.
\]Moreover, given $w\in\R^q$ such that $[y,w]=A_2w\neq 0$,
\[
0=[\mu y+x,w]=\mu B_2w\Rightarrow \mu=0.
\] Therefore, $\mk\subset \R^p\oplus \R^q$ contradicting the codimension 1 hypothesis.

However, it is straightforward to check that $\R y\oplus \R^p\oplus \R^q$ is an almost abelian ideal of codimension 1 of $\mg$. Note that this ideal is not nilpotent, since $B_2$ is a non-zero skew-symmetric endomorphism.
\end{ex}

\section{LCP structures on unimodular solvable Lie algebras}

In this section we focus on the structure of unimodular solvable Lie algebras carrying LCP structures. We show that each such Lie algebra is a semidirect product of a non-unimodular Lie subalgebra, acting on the (abelian) flat factor of the LCP structure, as in Proposition \ref{pro:cnstr}. 

In this way, we establish a one-to-one correspondence between unimodular solvable LCP Lie algebras, and triples $(\mh,h,\beta)$, where $(\mh,h)$ is a non-unimodular solvable metric Lie algebra, and $\beta:\mh\to\so(q)$ is an orthogonal Lie algebra representation vanishing on $\mh'$.

We start with the following:

\begin{lm}\label{lm:beta0} Let  $(\mg,g,\theta,\mmu)$ be a non-degenerate solvable LCP Lie algebra. Then  $\nabla^\theta_xy=0$ for all $x\in\mg'$ and $y\in\mmu$.
\end{lm}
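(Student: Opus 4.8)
The plan is to reduce the claim to a statement about the trace-free part of the representation $\nabla^\theta|_\mmu$, and then to use that a skew-symmetric representation of a solvable Lie algebra is forced to be trivial on the commutator ideal.

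First I would set $\rho(x):=\nabla^\theta_x|_\mmu$ for $x\in\mg$. By Proposition \ref{pro:algLCP}(3) this is a Lie algebra representation $\rho\colon\mg\to\so(\mmu)\oplus\R\Id_\mmu$. Restricting \eqref{eq:weylc2} to the $\nabla^\theta$-parallel subspace $\mmu$ shows that $\sigma(x):=\rho(x)-\theta(x)\Id_\mmu$ lies in $\so(\mmu)$, so $\sigma$ is a linear map $\mg\to\so(\mmu)$. Since $\theta$ is closed we have $\theta|_{\mg'}=0$ by \eqref{eq:tg}, whence $\nabla^\theta_x|_\mmu=\rho(x)=\sigma(x)$ for every $x\in\mg'$. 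The lemma is therefore equivalent to the assertion that $\sigma$ vanishes on $\mg'$.

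Next I would verify that $\sigma\colon\mg\to\so(\mmu)$ is itself a Lie algebra representation. As $\Id_\mmu$ is central in $\gl(\mmu)$ and $\theta([x,y])=0$ for all $x,y\in\mg$, we obtain
\[
[\sigma(x),\sigma(y)]=[\rho(x),\rho(y)]=\rho([x,y])=\sigma([x,y]).
\]
In particular $\sigma(\mg)$ is a subalgebra of $\so(\mmu)$, and being a homomorphic image of the solvable Lie algebra $\mg$, it is solvable.

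The main obstacle is the final step, namely proving that a solvable subalgebra of the compact Lie algebra $\so(\mmu)$ is necessarily abelian. I would argue as follows: complexifying and applying Lie's theorem to $\sigma(\mg)\otimes\C$ acting on $\mmu\otimes\C$, every element of $[\sigma(\mg),\sigma(\mg)]$ becomes strictly upper triangular in a suitable flag, hence nilpotent; on the other hand every element of $\so(\mmu)$ is semisimple, with purely imaginary eigenvalues over $\C$. An endomorphism that is simultaneously nilpotent and semisimple vanishes, so $[\sigma(\mg),\sigma(\mg)]=0$. Since $\sigma$ is a representation, $\sigma(\mg')=\sigma([\mg,\mg])=[\sigma(\mg),\sigma(\mg)]=0$, which is precisely the desired conclusion. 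It is worth stressing that the solvability of $\mg$ enters exactly at this point, to force abelianness of the image: without it, $\sigma$ could well be nonzero on $\mg'$.
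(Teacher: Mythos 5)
Your proof is correct and follows essentially the same route as the paper: both arguments observe that $\nabla^\theta|_\mmu$ is a Lie algebra representation of the solvable algebra $\mg$ whose image lies in the compact-type algebra $\so(\mmu)\oplus\R\Id_\mmu$, hence is abelian, which kills $\nabla^\theta_{[x,y]}|_\mmu$. The only differences are cosmetic: you first strip off the $\theta(x)\Id_\mmu$ summand to land in $\so(\mmu)$, and you spell out via Lie's theorem and semisimplicity the fact that a solvable subalgebra of a compact-type Lie algebra is abelian, which the paper simply invokes.
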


\begin{proof}
Since $\nabla^\theta:\mg \to\so( \mmu)\oplus\R$ is a Lie algebra representation by Proposition \ref{pro:algLCP} (3), its image $\nabla^\theta(\mg)$ is a solvable Lie subalgebra of $\so(\mmu)\oplus\R$. As $\so(\mmu)\oplus\R$ is of compact type, $\nabla^\theta(\mg)$ is of compact type as well and solvable, thus abelian.  Consequently, $0=[\nabla^\theta_x,\nabla^\theta_y](\mmu)=\nabla^\theta_{[x,y]}(\mmu)$ for all $x,y\in\mg$, hence $\nabla^\theta_{\mg'}(\mmu)=0$.
\end{proof}

\begin{lm}\label{lm:Ubot} Every LCP structure $(g,\theta,\mmu)$ on a unimodular solvable Lie algebra $\mg$ of dimension $n\ge 3$ is adapted, i.e. $\theta|_\mmu=0$. 
\end{lm}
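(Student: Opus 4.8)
The plan is to push the entire question onto the subalgebra $\mmu$ and then play the representation condition of Proposition~\ref{pro:algLCP}(3) against the trace identity of Corollary~\ref{cor:3.4}. If the structure is degenerate there is nothing to prove, so I assume $\mmu\neq 0$, set $q=\dim\mmu$, and let $\xi\in\mmu$ be the $g$-dual of $\theta|_\mmu$; proving $\theta|_\mmu=0$ amounts to proving $\xi=0$. Since $\mmu$ is $\nabla^\theta$-parallel, \eqref{eq:weylc2} lets me write $\nabla^\theta_x|_\mmu=A(x)+\theta(x)\Id_\mmu$ with $A(x)\in\so(\mmu)$, and by Proposition~\ref{pro:algLCP}(3) the map $A\colon\mg\to\so(\mmu)$ is a Lie algebra representation. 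Arguing exactly as in Lemma~\ref{lm:beta0}, its image is a solvable subalgebra of the compact Lie algebra $\so(\mmu)$, hence abelian, so the $A(x)$ commute pairwise and $A(\mg')=0$. Projecting \eqref{eq:weylc1} onto $\mmu$ identifies the intrinsic Levi-Civita connection of $(\mmu,g|_\mmu)$ as $\nabla^\mmu_x=A(x)-\theta\wedge x$ (with $\theta^\sharp$ replaced by $\xi$), and therefore gives the intrinsic bracket
\[
 [x,y]=A(x)y-A(y)x+\theta(x)y-\theta(y)x,\qquad x,y\in\mmu.
\]

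From this formula a direct computation, using $\tr A(x)=0$, yields the trace form of $\mmu$:
\[
 H^\mmu(x)=g(x,a)+(q-1)\theta(x),\qquad a:=\sum_i A(e_i)e_i,
\]
for an orthonormal basis $(e_i)$ of $\mmu$. On the other hand Corollary~\ref{cor:3.4} gives $H^\mmu=-(n-q)\,\theta|_\mmu$. Comparing the two, I get the single linear relation $a=-(n-1)\xi$. This comparison is uniform and needs no separate treatment of the conformally flat case $q=n$, where it simply reads $H^\mmu=0$.

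The decisive input is that $A$ is a representation with abelian image, i.e. $A([x,y])=[A(x),A(y)]=0$ for all $x,y\in\mmu$. Substituting the bracket formula into $A([x,y])=0$ produces a quadratic relation in $A$ and $\theta$, and combining it with the linear relation $a=-(n-1)\xi$ should collapse the whole system to a scalar identity of the form $c(n)\,|\xi|^2=0$ with $c(n)\neq0$ for $n\ge 3$; in the model where $A$ has a single rotation plane (so $\mmu$ is three-dimensional modulo the common kernel of the $A(x)$) one finds $c(n)=(n-1)(n-2)$. Since $n\ge 3$ this forces $\xi=0$, that is $\theta|_\mmu=0$.

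I expect this last step to be the main obstacle. The right bookkeeping is to simultaneously block-diagonalize the commuting skew-symmetric family $\{A(x)\}$ into a common kernel and $2$-dimensional invariant planes $V_k$, on each of which $A(x)=\omega^{(k)}(x)J_k$ for a rotation $J_k$ and a linear form $\omega^{(k)}$ vanishing on $\mg'$. One must then track how $A([x,y])=0$ distributes over the planes: the within-plane part of the constraint, fed through $a=-(n-1)\xi$, should produce the scalar identity $c(n)\,|\xi|^2=0$, while the cross-plane parts force the $\theta$-components belonging to distinct planes to vanish, so that no off-diagonal contribution can keep $\xi$ alive. The commutativity of the $A(x)$ together with $A(\mg')=0$ are precisely what keep this computation finite and tractable.
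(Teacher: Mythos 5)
Your route is genuinely different from the paper's, and every step you actually carry out is correct: the formula $\nabla^\theta_x|_\mmu=A(x)+\theta(x)\Id_\mmu$, the abelianness of the image of $A$ and $A(\mg')=0$ (as in Lemma \ref{lm:beta0}), the induced bracket on $\mmu$, the trace identity $H^\mmu(x)=g(x,a)+(q-1)\theta(x)$, and the comparison with Corollary \ref{cor:3.4} giving $a=-(n-1)\xi$ all check out. The weakness is that the decisive step is only announced ("should collapse", "I expect this last step to be the main obstacle") rather than executed, so as written the proof is incomplete. It does close, though, and more easily than you fear: no cross-plane bookkeeping is needed. On each invariant plane $V_k$ write $A(x)|_{V_k}=\omega^{(k)}(x)J_k$ and set $w_k:=P_{V_k}\bigl(\omega^{(k)}|_\mmu\bigr)^{\sharp}$. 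Then $a=\sum_k J_kw_k$, so $a=-(n-1)\xi$ gives $P_{V_k}\xi=-\tfrac1{n-1}J_kw_k$ and $\xi$ has no component in the common kernel. Restricting $\omega^{(k)}([x,y])=0$ to $x,y\in V_k$ (where $A(x)y=\omega^{(k)}(x)J_ky$) yields $(n-2)\bigl(\omega^{(k)}(x)\theta(y)-\omega^{(k)}(y)\theta(x)\bigr)=0$, and evaluating on an orthonormal basis $x,\,y=J_kx$ of $V_k$ turns this into $\tfrac{n-2}{n-1}|w_k|^2=0$, i.e.\ $(n-1)(n-2)\,|P_{V_k}\xi|^2=0$ — exactly your predicted constant $c(n)$. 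Hence $\xi=0$.

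For comparison, the paper's proof is three lines and needs none of this machinery: choose an orthonormal basis whose first $s$ vectors span $\mg'$; for $y\in\mmu$ unimodularity gives $0=\tr\ad_y=\sum_{i\le s}g([y,e_i],e_i)$; plugging $x=z=e_i$ with $i\le s$ into \eqref{eq:LCP}, using Lemma \ref{lm:beta0} to kill $\nabla^\theta_{e_i}y$ and $\theta|_{\mg'}=0$, and summing gives $0=\sum_{i\le s}g([e_i,y],e_i)+s\,\theta(y)$, whence $s\,\theta(y)=0$ with $s\ge1$ (the case $s=0$ being covered by Corollary \ref{cor:abelian}). Your argument buys, as a by-product, a normal form for the action on $\mmu$ that the paper only extracts later in Theorem \ref{teo:UUbot}, but at the cost of the trace form of $\mmu$, Corollary \ref{cor:3.4}, and a simultaneous block-diagonalization, where a single trace over $\mg'$ suffices.
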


\begin{proof} 
Let $\{e_i\}_{i=1}^n$ be an orthonormal basis of $\mg$ such that $\{e_i\}_{i=1}^s$ spans the derived algebra $\mg'$. If $s=0$ we have $\mmu=0$ by Corollary \ref{cor:abelian}, so the statement is clear. Assume now $s\ge 1$. 

For every $y\in \mmu$, we have\begin{equation}\label{eq:trsn}
0=H^\mg(y)=\tr(\ad_y)=\sum_{i=1}^n\lela \ad_y e_i,e_i\rira=\sum_{i=1}^s\lela \ad_y e_i,e_i\rira.
\end{equation}

On the other hand, taking $x=z=e_i$ and $y\in\mmu$ in \eqref{eq:LCP}, summing over $i=1, \ldots, s$, and using \eqref{eq:tg} together with Lemma \ref{lm:beta0}, we get
\begin{equation}\label{eq:sg}
0=\sum_{i=1}^s\lela [e_i,y],e_i\rira+s\theta(y) \qquad \forall\; y\in \mmu.
\end{equation}
Using \eqref{eq:trsn}, we conclude that $\theta$ vanishes on $\mmu$.
\end{proof}

We will now obtain one of the main results of the paper, which shows in particular that every LCP structure on an unimodular solvable Lie algebra is obtained by the construction of Proposition \ref{pro:cnstr}.

\begin{teo} \label{teo:UUbot}
Let  $(g,\theta,\mmu)$ be an LCP structure on a unimodular solvable Lie algebra $\mg$. Then the following holds:
\begin{enumerate}
\item $\mmu$ is an abelian ideal of $\mg$ contained in the centre $\mz(\mg')$ of $\mg'$;
\item $\nabla^\theta_xy=\ad_xy$ for every $x\in \mg$ and $y\in\mmu$.
\end{enumerate}
\end{teo}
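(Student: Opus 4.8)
The plan is to reduce both assertions to the single statement that $\nabla^\theta_y=0$ for every $y\in\mmu$: indeed, by the torsion-freeness identity \eqref{eq:torsion} one has $[x,y]=\nabla^\theta_xy-\nabla^\theta_yx$, so $\nabla^\theta_yx=0$ for all $x$ is exactly equivalent to $\nabla^\theta_xy=\ad_xy$ for all $x$, which is (2). First I would invoke Lemma \ref{lm:Ubot} to get that the structure is adapted, $\theta|_\mmu=0$, so that $\theta^\sharp\in\mmu^\perp$, and Lemma \ref{lm:beta0} to get $\nabla^\theta_w|_\mmu=0$ for $w\in\mg'$. Writing $\rho(x):=\nabla^\theta_x|_\mmu$, Proposition \ref{pro:algLCP}(3) says that $\rho\colon\mg\to\so(\mmu)\oplus\R\Id_\mmu$ is a representation; as $\mg$ is solvable its image is solvable and of compact type, hence abelian, exactly as in the proof of Lemma \ref{lm:beta0}. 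A short computation from \eqref{eq:LCP} using $\theta|_\mmu=0$ shows that $\rho|_\mmu$ is the Levi-Civita map $\nabla^{g|_\mmu}$ of the metric Lie algebra $(\mmu,g|_\mmu)$; being a representation, it has vanishing curvature, so $(\mmu,g|_\mmu)$ is a flat metric Lie algebra. (Dually, a Jacobi-identity computation shows that $y\mapsto\nabla^\theta_y|_{\mmu^\perp}=\pr_{\mmu^\perp}\circ\ad_y|_{\mmu^\perp}$ is a homomorphism $\mmu\to\so(\mmu^\perp)$, so that $y\mapsto\nabla^\theta_y$ is a homomorphism $\mmu\to\so(\mg)$ with abelian image; this frames the situation but is not strictly needed below.)

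The decisive step is to prove that $\mmu\subseteq\mg'$. Here I would use that, by the computation leading to \eqref{eq:xu}, $\rho(x)=\pr_\mmu\circ\ad_x|_\mmu$ for every $x\in\mmu^\perp$; applied to $x=\theta^\sharp$ this gives $\rho(\theta^\sharp)=\theta(\theta^\sharp)\Id_\mmu+A$ with $A\in\so(\mmu)$ and $\theta(\theta^\sharp)=|\theta|^2>0$, so that $\rho(\theta^\sharp)$ is invertible, all its complex eigenvalues having real part $|\theta|^2$. Since the sum of the generalized eigenspaces of $\ad_{\theta^\sharp}$ for nonzero eigenvalues is always contained in $\mg'$, the remaining task is to show that $\mmu$ has no component in the generalized $0$-eigenspace of $\ad_{\theta^\sharp}$, equivalently that $\mmu$ is $\ad_{\theta^\sharp}$-invariant (whence $\mmu=\ad_{\theta^\sharp}(\mmu)\subseteq\mg'$ by invertibility of $\rho(\theta^\sharp)=\ad_{\theta^\sharp}|_\mmu$). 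Reconciling the orthogonal projection $\pr_\mmu$ with the $\ad_{\theta^\sharp}$-eigenspace decomposition is precisely where the solvable structure has to be exploited carefully, and I expect this to be the main obstacle.

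Granting $\mmu\subseteq\mg'$, the rest is formal. As $\mg$ is solvable, $\mg'$ is nilpotent, so $\mmu$ is a nilpotent subalgebra; a flat nilpotent metric Lie algebra is abelian, since in Milnor's orthogonal splitting the skew operators $\ad_b$ are now also nilpotent and hence zero. Thus $\mmu$ is abelian and $\rho|_\mmu=\nabla^{g|_\mmu}=0$, so $\ad_u|_\mmu=0$ for $u\in\mmu$. In the splitting $\mg=\mmu\oplus\mmu^\perp$ the endomorphism $\ad_u$ is then block upper-triangular, with diagonal blocks $0$ on $\mmu$ and $\nabla^\theta_u|_{\mmu^\perp}$ on $\mmu^\perp$; since $u\in\mg'$ makes $\ad_u$ nilpotent, the skew-symmetric block $\nabla^\theta_u|_{\mmu^\perp}$ is nilpotent, hence zero. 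Therefore $\nabla^\theta_u=0$ for all $u\in\mmu$, which is assertion (2) and shows in particular that $\mmu$ is an ideal. Finally, for $w\in\mg'$ and $u\in\mmu$, \eqref{eq:torsion} together with Lemma \ref{lm:beta0} and $\nabla^\theta_u=0$ give $[w,u]=\nabla^\theta_wu-\nabla^\theta_uw=0$, so $\mmu$ centralises $\mg'$; combined with $\mmu\subseteq\mg'$ this yields $\mmu\subseteq\mz(\mg')$ and completes (1).
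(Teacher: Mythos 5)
Your reduction of the theorem to the single claim $\nabla^\theta_u=0$ for $u\in\mmu$ is sound, and the endgame you describe (flatness of $(\mmu,g|_\mmu)$, Milnor's splitting to get that a flat nilpotent metric Lie algebra is abelian, nilpotency of $\ad_u$ for $u\in\mg'$ forcing the skew-symmetric block $\nabla^\theta_u|_{\mmu^\perp}$ to vanish) would indeed work once you know $\mmu\subseteq\mg'$. But the proof has a genuine gap exactly where you flag it: you never establish $\mmu\subseteq\mg'$. Knowing that $\rho(\theta^\sharp)=\pr_\mmu\circ\ad_{\theta^\sharp}|_\mmu$ is invertible only controls the $\mmu$-component of $\ad_{\theta^\sharp}(u)$; without knowing that $\mmu$ is $\ad_{\theta^\sharp}$-invariant you cannot write $\mmu=\ad_{\theta^\sharp}(\mmu)\subseteq\im(\ad_{\theta^\sharp})\subseteq\mg'$, and the generalized-eigenspace decomposition of $\ad_{\theta^\sharp}$ has no a priori compatibility with the orthogonal splitting $\mg=\mmu\oplus\mmu^\perp$. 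Since this inclusion is the crux of the whole theorem (it is what makes $\mmu$ an ideal sitting inside $\mz(\mg')$), the argument as written is incomplete.

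For comparison, the paper closes this gap by a direct computation rather than spectral theory. Writing $\mg=\mv_0\oplus\mv_1\oplus\mv_2$ with $\mv_2=[\mg',\mg']$, $\mg'=\mv_1\oplus\mv_2$ and $\mv_0=(\mg')^\perp$, one first gets $\mmu\perp\mv_2$ from \eqref{eq:LCP} evaluated on $\mg'$ together with Lemma \ref{lm:beta0}. Then, evaluating \eqref{eq:LCP} on $x\in\mv_0$, $y\in\mmu$ and $z=\theta^\sharp\in\mv_0$ (the left-hand side vanishes because $\nabla^\theta_xy\in\mmu\perp\theta^\sharp$), one obtains $0=-\tfrac12 g([x,\theta^\sharp],y)-|\theta|^2g(x,y)$; decomposing $x=x_0+x_1$ with $x_0\in\mmu$, $x_1\in\mmu^\perp$, taking $y=x_0$, and using the polarised identity $g([\theta^\sharp,x_0],x_0)=|\theta|^2|x_0|^2$ from Proposition \ref{pro:algLCP}(2) together with $[x_1,\theta^\sharp]\in\mmu^\perp$ forces $2|\theta|^2|x_0|^2=|\theta|^2|x_0|^2$, hence $x_0=0$ and $\mv_0\perp\mmu$, i.e.\ $\mmu\subseteq\mg'$. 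Some argument of this kind (or the paper's subsequent use of \cite[Lemma 4.3]{ArLa} to get $[\mmu,\mg']=0$) is needed to make your proof complete; the spectral picture you sketch does not substitute for it.
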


\begin{proof}If $\mmu=0$, the result is trivially satisfied, so we assume $\mmu\neq 0$.

Consider the unique orthogonal decomposition of $\mg$
\begin{equation}
\mg=\mv_0\oplus\mv_1\oplus \mv_2,
\end{equation}satisfying $\mv_2=[\mg',\mg']\subset\mg'$, $\mg'=\mv_1\oplus \mv_2$ and 
$\mv_0=(\mg')^\bot$.

We claim that $ \mmu\bot\mv_2$. Indeed, taking $y\in\mmu$ in \eqref{eq:LCP} and using Lemma \ref{lm:Ubot} yields
 \begin{multline}
\label{eq:LCPu}
\lela\nabla^\theta_xy,z\rira=\frac12\left(\lela  [x,y],z\rira-\lela[x,z],y\rira-\lela[y,z],x)\right)\\
+\theta(x)\lela y,z\rira-\theta(z)g(x,y)\qquad\qquad \forall\; x,z\in \mg,\ \forall\; y\in\mmu.
\end{multline}
Then, taking $x,z\in\mg'$ in \eqref{eq:LCPu}, and using Lemma \ref{lm:beta0} together with \eqref{eq:tg} we obtain
\begin{equation}\label{eq:LCPu1}
    \lela [x,y],z\rira-\lela [x,z],y\rira+\lela[z,y],x\rira=0 \quad \forall\; x,z\in\mg', \ \forall\; y\in \mmu.
\end{equation}
Interchanging $x,z$ and subtracting the two equations, we obtain
\[\lela [x,z],y\rira=0 \qquad \forall\; x,z\in\mg', \ \forall\; y\in \mmu.
\]This is equivalent to $\mv_2=[\mg',\mg']\bot \mmu$, and thus $ \mmu\subset \mv_0\oplus\mv_1$.

We shall show that actually $ \mmu$ is contained in $\mv_1$. To do this, we evaluate \eqref{eq:LCPu} on $x, z\in\mv_0=(\mg')^\bot$, $y\in\mmu$, which gives
\begin{equation}\label{eq:LCPv0}
    \lela\nabla^\theta_xy,z\rira=-\frac12\lela[x,z],y\rira
+\theta(x)\lela y,z\rira-\theta(z)g(x,y)\qquad \forall\; x,z\in \mv_0,\ \forall\; y\in\mmu.
\end{equation}
In particular, taking $z=\theta^\sharp$ in the equation above and using that $\theta|_\mmu=0$ (by Lemma \ref{lm:Ubot}), we get
\begin{equation}\label{eq:v0}
 0=-\frac12\lela[x,\theta^\sharp],y\rira-|\theta|^2g(x,y)\qquad \forall\; x\in \mv_0,\ \forall\; y\in\mmu.
\end{equation}
Let us decompose some arbitrary vector $x\in\mv_0$ as $x=x_0+x_1$, where $x_0\in \mmu$ and $x_1\in \mmu^\bot$. Equation \eqref{eq:v0} applied to $y=x_0$ gives
\begin{equation}\label{eq:v01}
2|x_0|^2|\theta|^2=-\lela[x,\theta^\sharp],x_0\rira.
\end{equation}
However, by Proposition \ref{pro:algLCP} we have 
$$\lela[x,\theta^\sharp],x_0\rira=\lela[x_0,\theta^\sharp],x_0\rira+\lela[x_1,\theta^\sharp],x_0\rira=-|x_0|^2|\theta|^2.$$
since $[x_1,\theta^\sharp]\in\mmu^\perp$. This shows that $x_0=0$ and thus $x\in \mmu^\bot$. Therefore $\mv_0\subset \mmu^\bot$ which together with $ \mmu\bot\mv_2$ implies $ \mmu\subset \mv_1$. 
We shall denote by $\mm:=\mv_1\cap  \mmu^\bot$ so that $\mv_1= \mmu\oplus \mm$ as an orthogonal direct sum. The above decomposition refines to 
\begin{equation}
\mg=\mv_0\oplus \mmu\oplus\mm\oplus \mv_2,
\end{equation}

Using the fact that $\nabla^\theta$ preserves $\mmu$ together with
$\mmu\perp \mv_0$ in \eqref{eq:LCPv0} yields $[\mv_0,\mv_0]\bot \mmu$, so 
\begin{equation}\label{eq:vvmv}
[\mv_0,\mv_0]\subseteq \mm\oplus\mv_2.
\end{equation}

Consider now $x\in \mv_1\subset \mg'$ and $z\in\mg'$ in \eqref{eq:LCPu1}. The obvious inclusions $[\mv_1,\mg']\subset[\mg',\mg']\bot \mmu$ and $[\mg',\mv_1]\subset \mv_2\bot \mv_1$ show that the last two terms on the left hand side vanish. We are left with:
\begin{equation*}
\lela [x,y],z\rira=0\qquad\forall\; x\in\mv_1,\ \forall\; y\in \mmu,\ \forall\; z\in\mg'.
\end{equation*} Since $ [x,y]\in\mg'$, we conclude that 
\begin{equation}\label{eq:v1u}
[\mv_1, \mmu]=0.
\end{equation}

Notice that, since $\mg $ is solvable,  $\mg'=\mv_1\oplus\mv_2$ is a nilpotent Lie algebra with derived subalgebra $(\mg')'=\mv_2$, and $ \mmu\subset \mg'$. Let $\mk$ denote the Lie subalgebra of $\mg'$ generated by $\mv_1$. It is clear that $[\mmu,\mk]=0$ by \eqref{eq:v1u} and the Jacobi identity. Moreover, since $\mk$ is a subalgebra of $\mg'$ and clearly satisfies $\mk+[\mg',\mg']=\mg'$ we obtain $\mg'=\mk$ by \cite[Lemma 4.3]{ArLa}. Therefore, 
\begin{equation}\label{eq:g0gp}
[ \mmu,\mg']=0.
\end{equation}

Actually, $ \mmu$ is an ideal of $\mg$; to show this, it is sufficient by \eqref{eq:g0gp} to show that $[\mv_0, \mmu]\subset  \mmu$. We evaluate \eqref{eq:LCPu} on $x\in\mv_0$ and $z\in\mg'$, and use  \eqref{eq:tg} together with \eqref{eq:g0gp}, to get
\begin{equation}\label{eq:p1}
    \lela\nabla^\theta_xy,z\rira=\frac12\left(\lela  [x,y],z\rira-\lela[x,z],y\rira\right)
+\theta(x)\lela y,z\rira\qquad \forall\; x\in\mv_0,\ \forall\; y\in \mmu,\ \forall\; z\in\mg'.
\end{equation}
Next, we evaluate \eqref{eq:LCPu} on $x\in\mg'$ and $z\in\mv_0$. Using \eqref{eq:g0gp} and Lemma \ref{lm:beta0}, we obtain 
\begin{equation}
0=\frac12\left(-\lela [x,z],y\rira+\lela[z,y],x\rira\right)-\theta(z)\lela x,y\rira\qquad\forall\; x\in\mg',\ \forall\; y\in \mmu,\ \forall\; z\in\mv_0.
\end{equation}
Interchanging $x$ with $z$ in this last equation and adding the result to \eqref{eq:p1} yields
\begin{equation}
    \label{eq:p2}
 \lela\nabla^\theta_xy,z\rira=\lela [x,y],z\rira\qquad\forall\; x\in\mv_0,\ \forall\; y\in \mmu,\ \forall\; z\in\mg'.
\end{equation}
If $z\in\mv_0$, both terms of \eqref{eq:p2} vanish, so this equation actually holds for every $z\in\mg$. We thus obtain for all $x\in\mv_0$ and $y\in \mmu$:
\begin{equation}\label{eq:v0g0rule}
[x,y]=\nabla^\theta_xy\in \mmu.
\end{equation}
Note that this equation actually holds for every $x\in \mg$, since both terms vanish for $x\in \mg'$ by \eqref{eq:g0gp} and Lemma \ref{lm:beta0}. This proves (2). Moreover, $\mmu$ is an ideal of $\mg$, contained in the centre of $\mg'$ by \eqref{eq:g0gp}, thus proving (1).
\end{proof}

We now derive some consequences of Theorem \ref{teo:UUbot}. We start by showing that there are no interesting LCP structures on nilpotent Lie algebras.

\begin{cor}
Every LCP structure on a nilpotent Lie algebra is degenerate.
\end{cor}
\begin{proof} 
Assume that $(\mg,g,\theta,\mmu)$ is a nilpotent LCP Lie algebra, and let $q$ denote the dimension of $\mmu$. Then, $\mg$ is unimodular and $\mmu^\bot$ is nilpotent, being a Lie subalgebra of a nilpotent Lie algebra, hence it is unimodular. By the second part of Equation \eqref{eq:trace} in Corollary \ref{cor:3.4} 
we get $q\theta|_{\mmu^\perp}=0$. However, $\theta|_{\mmu^\perp}\ne 0$ since $\theta\ne 0$ and $\theta|_\mmu=0$ by Lemma \ref{lm:Ubot}, so finally $q=0$, i.e. the LCP structure is degenerate.
\end{proof}

Theorem \ref{teo:UUbot} shows in particular that any unimodular solvable LCP Lie algebra $(\mg,g,\theta,\mmu)$ is a semidirect product of the non-unimodular Lie algebra $\mh:=\mmu^\bot$, acting on the abelian Lie algebra $\mmu$ via the Lie algebra representation $\nabla^\theta|_{\mh}:\mh\to \gl(\mmu)$, as in Proposition \ref{pro:cnstr}. As a consequence of these two results, we have the following:

\begin{cor}\label{cor:11}
There is a one-to-one correspondence between non-degenerate unimodular solvable LCP Lie algebras $(\mg,g,\theta,\mmu)$  and triples $(\mh,h,\beta)$, where $(\mh,h)$ is a metric solvable non-unimodular Lie algebra, and $\beta:\mh\to \so(q)$ is a Lie algebra representation, for some $q\in\N^*$, which vanishes on $\mh'$.
\end{cor}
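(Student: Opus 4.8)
The plan is to exhibit explicit maps in both directions and to verify that they are mutually inverse. For the direction from triples to LCP structures I would simply invoke Proposition \ref{pro:cnstr}: given $(\mh,h,\beta)$ with $\mh$ non-unimodular solvable and $\beta:\mh\to\so(q)$ a representation vanishing on $\mh'$, the quadruple $(\mg:=\mh\ltimes_\alpha\R^q,\,g,\,\theta:=-\frac1qH^\mh,\,\mmu:=\R^q)$ with $\alpha(x):=-\frac1qH^\mh(x)\Id_{\R^q}+\beta(x)$ is a non-degenerate unimodular LCP structure, solvable because $\mh$ is. Since $\mh$ non-unimodular forces $\theta\ne 0$, and $q\ge 1$, this is genuinely non-degenerate.

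For the reverse direction I would start from a non-degenerate LCP structure $(\mg,g,\theta,\mmu)$ on a unimodular solvable $\mg$, set $q:=\dim\mmu\ge 1$, and take $\mh:=\mmu^\bot$ with $h:=g|_\mh$. By Lemma \ref{lm:sub}, $\mh$ is a subalgebra, hence solvable; by Corollary \ref{cor:3.4}, $H^\mh=H^{\mmu^\bot}=-q\,\theta|_{\mmu^\bot}$, and since $\theta\ne 0$ vanishes on $\mmu$ by Lemma \ref{lm:Ubot}, its restriction to $\mmu^\bot$ is non-zero, so $\mh$ is non-unimodular. Fixing an orthonormal basis of $\mmu$ to identify $\so(\mmu)$ with $\so(q)$, I would define $\beta(x):=\ad_x|_\mmu-\theta(x)\Id_\mmu$ for $x\in\mh$. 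By Theorem \ref{teo:UUbot}(2) this equals $\nabla^\theta_x|_\mmu-\theta(x)\Id_\mmu$, which lies in $\so(\mmu)$ by \eqref{eq:weylc2}. That $\beta$ is a representation follows from Proposition \ref{pro:algLCP}(3): since $\Id_\mmu$ is central and $\theta$ is closed (so $\theta([x,y])=0$), one gets $[\beta(x),\beta(y)]=[\nabla^\theta_x|_\mmu,\nabla^\theta_y|_\mmu]=\nabla^\theta_{[x,y]}|_\mmu=\beta([x,y])$; and $\beta$ vanishes on $\mh'\subset\mg'$ by Lemma \ref{lm:beta0} together with the vanishing of $\theta$ on $\mg'$.

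It then remains to check that the two assignments are mutually inverse. Going from a triple to an LCP structure and back, the key point is that on $\mmu^\bot=\mh$ one has $\ad_x|_\mmu=\alpha(x)=-\frac1qH^\mh(x)\Id_\mmu+\beta(x)$ and $\theta(x)=-\frac1qH^\mh(x)$, so the reconstructed map $\ad_x|_\mmu-\theta(x)\Id_\mmu$ equals $\beta(x)$, while $(\mh,h)$ is visibly unchanged. Conversely, starting from an LCP structure, Theorem \ref{teo:UUbot} presents $\mg$ as a semidirect product $\mh\ltimes\mmu$ with $\mmu$ an abelian ideal and action $\ad_x|_\mmu$; using $\beta(x)=\ad_x|_\mmu-\theta(x)\Id_\mmu$ and Corollary \ref{cor:3.4} to identify $\theta|_\mh=-\frac1qH^\mh$, the rebuilt semidirect product has action $\alpha(x)=\beta(x)+\theta(x)\Id_\mmu=\ad_x|_\mmu$, so the rebuilt LCP structure coincides with the original one after the identification $\mmu\cong\R^q$. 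The main obstacle is bookkeeping rather than conceptual: one must ensure that $\theta$, which looks like free data in the quadruple, is in fact forced to equal $-\frac1qH^\mh$ on $\mmu^\bot$ via Corollary \ref{cor:3.4}, so that no information is created or lost in either passage, and one should be explicit that the correspondence is understood up to the obvious notions of isomorphism of LCP structures and of triples.
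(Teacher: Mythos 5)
Your proposal is correct and follows exactly the route the paper intends: Corollary \ref{cor:11} is stated there as an immediate consequence of Proposition \ref{pro:cnstr} (triple $\Rightarrow$ LCP) and Theorem \ref{teo:UUbot} (LCP $\Rightarrow$ semidirect product $\mmu^\perp\ltimes\mmu$ with $\ad_x|_\mmu=\nabla^\theta_x|_\mmu$), with $\theta|_{\mmu^\perp}=-\frac1qH^{\mmu^\perp}$ forced by Corollary \ref{cor:3.4}. Your verification that the two assignments are mutually inverse, and your remark that the correspondence is up to the natural isomorphisms, merely make explicit what the paper leaves implicit.
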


In particular, Corollary \ref{cor:11} shows that every modification of the metric on the orthogonal $\mh=\mmu^\perp$ of the flat space of an LCP structure on a solvable unimodular Lie algebra does not alter the LCP character:

\begin{cor}\label{cor:metric}
If $\mg$ is solvable and unimodular, and $(\mg,g,\theta,\mmu)$ is an LCP Lie algebra, then $(\mg,\tilde g,\theta,\mmu)$ is also an LCP Lie algebra for any other scalar product $\tilde g$ with the property that $\mmu\subset \ker (\tilde g-g)$.
\end{cor}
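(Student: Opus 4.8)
The plan is to reduce everything to Corollary \ref{cor:converse}, whose hypotheses involve only the Lie algebra structure of $\mg$ and the restriction of the metric to $\mmu$, so that the metric on the complement $\mmu^\perp$ plays no role whatsoever. First I would dispose of the degenerate case $\mmu=0$: here the hypothesis $\mmu\subset\ker(\tilde g-g)$ is vacuous, but the only requirements for $(\mg,\tilde g,\theta,0)$ to be a (degenerate) LCP structure are that $\theta$ be non-zero and closed, and by \eqref{eq:tg} closedness of $\theta$ means $\theta|_{\mg'}=0$, a condition independent of the metric. Hence this case is immediate.

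For the non-degenerate case $\mmu\neq 0$, set $q:=\dim\mmu\ge 1$. I would extract the metric-independent algebraic data from Theorem \ref{teo:UUbot}: $\mmu$ is an abelian ideal of $\mg$ contained in $\mz(\mg')$, and $\nabla^\theta_x y=\ad_x y$ for all $x\in\mg$ and $y\in\mmu$. Setting $\mh:=\mmu^\perp$ (the $g$-orthogonal complement), Lemma \ref{lm:sub} shows $\mh$ is a subalgebra, and Corollary \ref{cor:3.4} shows it is non-unimodular, since $H^\mh=-q\,\theta|_\mh$ with $\theta|_\mh\neq 0$ (because $\theta\neq 0$ while $\theta|_\mmu=0$ by Lemma \ref{lm:Ubot}). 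Thus $\mg=\mh\oplus\mmu$ as vector spaces.

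Next I would verify the two hypotheses of Corollary \ref{cor:converse}, both of which concern only the bracket and the metric on $\mmu$. Since $\mmu\subset\mz(\mg')$ and $\mh'\subset\mg'$, we have $\ad_x|_\mmu=0$ for every $x\in\mh'$. Taking $g_\mmu:=g|_\mmu$, the identity \eqref{eq:weylc2} together with Theorem \ref{teo:UUbot}(2) gives $\ad_x|_\mmu-\theta(x)\Id_\mmu=\nabla^\theta_x|_\mmu-\theta(x)\Id_\mmu\in\so(\mmu)$ for every $x\in\mh$, where the skew-symmetry is with respect to $g|_\mmu$. So the hypotheses of Corollary \ref{cor:converse} hold with the metric $g|_\mmu$ on $\mmu$.

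The crux is then to decode the hypothesis $\mmu\subset\ker(\tilde g-g)$, i.e. $\tilde g(u,x)=g(u,x)$ for all $u\in\mmu$ and $x\in\mg$. This yields two facts: first, $\tilde g|_\mmu=g|_\mmu=g_\mmu$, so the metric on $\mmu$ is literally unchanged (in particular $\so(\mmu)$ and $\Id_\mmu$ are the same, and the skew-symmetry of each $\ad_x|_\mmu-\theta(x)\Id_\mmu$ persists); second, since $\mh=\mmu^\perp$, for $x\in\mh$ we get $\tilde g(u,x)=g(u,x)=0$, so $\mmu$ and $\mh$ remain $\tilde g$-orthogonal. Hence $\tilde g$ splits as the orthogonal direct sum $\tilde g=g_\mh+g_\mmu$ relative to $\mg=\mh\oplus\mmu$, with $g_\mh:=\tilde g|_\mh$ an arbitrary scalar product on $\mh$. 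Corollary \ref{cor:converse}, applied with this $g_\mh$, directly gives that $(\mg,\tilde g,\theta,\mmu)$ is an LCP structure. The only genuine point to watch is precisely this last decoding: one must check that $\mmu\subset\ker(\tilde g-g)$ forces $\tilde g$ to keep $\mmu$ and $\mh$ orthogonal and to restrict to the same inner product on $\mmu$, so that Corollary \ref{cor:converse} applies verbatim with only the free factor $g_\mh$ modified. Equivalently, the conclusion follows from the bijection of Corollary \ref{cor:11}, observing that $(\mg,g,\theta,\mmu)$ and $(\mg,\tilde g,\theta,\mmu)$ correspond to triples $(\mh,h,\beta)$ and $(\mh,\tilde h,\beta)$ sharing the same $\mh$ and the same $\beta$, and differing only in the metric on $\mh$.
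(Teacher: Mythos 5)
Your proposal is correct and follows essentially the same route as the paper, which deduces the corollary directly from the one-to-one correspondence of Corollary \ref{cor:11} (equivalently, from Theorem \ref{teo:UUbot} combined with Corollary \ref{cor:converse}). Your careful decoding of the hypothesis $\mmu\subset\ker(\tilde g-g)$ — showing it forces $\tilde g|_\mmu=g|_\mmu$ and preserves the orthogonality of $\mmu$ and $\mh$, so that only the free factor $g_\mh$ changes — is exactly the content the paper leaves implicit.
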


Consequently, in the solvable unimodular case, there are more general modifications of the metric than the one in Corollary \ref{cor:modif} that preserve the LCP condition.

As another application of Theorem \ref{teo:UUbot} we obtain the description of LCP structures on unimodular almost abelian Lie algebras.

\begin{cor}\label{cor:almost-abelian} Let $\mg$ be an unimodular almost abelian Lie algebra. If $\mg$ admits an LCP structure $(g,\theta,\mmu)$, then $\mg$ can be written as a semidirect sum $\mg=\R b\ltimes (\R^p\oplus \R^q)$, where the three factors are orthogonal with respect to $g$, $g(b,b)=1$, and $\mmu=\R^q$. Moreover, $\R^p$ and $\R^q$ are invariant by $\ad_b$, which has the form
\begin{equation}
\ad_b|_{\R^p\oplus\R^q}= \left[\begin{array}{c|c}  
 A &  \\ \hline 
  & B-\frac1q\tr(A)\Id_{\R^q}
\end{array}\right],
\end{equation}
for some $A\in \gl(p)$ such that $\tr(A)\neq 0$, $B\in \so(q)$, and $\theta=-\frac1q\tr(A)g(b,\cdot)$. In particular, the LCP Lie algebra is isomorphic to $\mathcal{L}_{(A,B)}$ constructed in Example $\ref{ex:almab}$.
\end{cor}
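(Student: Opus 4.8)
The plan is to deduce everything from Theorem \ref{teo:UUbot}, so the first task is to locate $\mmu$ inside the abelian ideal. Since $\mg$ is almost abelian it is solvable, and being unimodular it falls under Theorem \ref{teo:UUbot}; I treat the non-degenerate case $\mmu\neq 0$, as otherwise there is nothing to describe (and $\mmu=0$ is already forced when $\mg$ is abelian by Corollary \ref{cor:abelian}). Fix a codimension-one abelian ideal $\ma\subset\mg$. Because $\mg/\ma$ is one-dimensional we have $\mg'\subseteq\ma$, and since $\ma$ is abelian so is $\mg'$; hence its centre satisfies $\mz(\mg')=\mg'$. By Theorem \ref{teo:UUbot}(1), $\mmu$ is an abelian ideal contained in $\mz(\mg')=\mg'\subseteq\ma$. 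Establishing this containment $\mmu\subseteq\ma$ is the key structural input extracted from the almost abelian hypothesis, and it is the first thing I would pin down.

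Next I would build the orthogonal decomposition. As $\mmu\subseteq\ma$, the line $\ma^\perp$ lies in $\mmu^\perp=:\mh$; let $b$ span $\ma^\perp$ with $g(b,b)=1$, so $b\in\mh$. Set $\R^q:=\mmu$ and $\R^p:=\ma\cap\mmu^\perp$, which yields the orthogonal splitting $\mg=\R b\oplus\R^p\oplus\R^q$ with $\mh=\R b\oplus\R^p$ and $\ma=\R^p\oplus\R^q$ abelian, i.e. an orthogonal almost abelian presentation $\mg=\R b\ltimes(\R^p\oplus\R^q)$. To reach the block form of $\ad_b$ I would verify invariance of the two summands: $\R^q=\mmu$ is an ideal by Theorem \ref{teo:UUbot}(1), while $\R^p=\ma\cap\mh$ is $\ad_b$-invariant because $\ma$ is an ideal and $\mh$ is a subalgebra containing $b$ (Proposition \ref{pro:algLCP}(1)), so $[b,\R^p]\subseteq\ma\cap\mh=\R^p$. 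Put $A:=\ad_b|_{\R^p}\in\gl(p)$.

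For the $\R^q$-block I would combine Theorem \ref{teo:UUbot}(2) with skew-symmetry: by \eqref{eq:weylc2} the endomorphism $\nabla^\theta_b|_\mmu-\theta(b)\Id_\mmu$ is skew-symmetric, and since $\nabla^\theta_b|_\mmu=\ad_b|_\mmu$, we get $\ad_b|_{\R^q}=\theta(b)\Id_{\R^q}+B$ with $B:=\ad_b|_{\R^q}-\theta(b)\Id_{\R^q}\in\so(q)$. It remains to identify $\theta$, which I would do through the trace form of $\mh=\R b\ltimes_A\R^p$: a direct computation gives $H^\mh(b)=\tr A$ and $H^\mh(\R^p)=0$, hence $H^\mh=\tr(A)\,g(b,\cdot)|_\mh$. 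Applying Corollary \ref{cor:3.4} to $\mmu^\perp=\mh$ gives $H^\mh=-q\,\theta|_\mh$; comparing the two expressions forces $\theta|_{\R^p}=0$ and $\theta(b)=-\tfrac1q\tr A$, which together with $\theta|_\mmu=0$ (Lemma \ref{lm:Ubot}) yields $\theta=-\tfrac1q\tr(A)\,g(b,\cdot)$. As $\theta\neq 0$ we conclude $\tr A\neq 0$, and $\ad_b|_{\R^p\oplus\R^q}=\diag\!\left(A,\,B-\tfrac1q\tr(A)\Id_{\R^q}\right)$; choosing orthonormal bases of $\R^p$ and $\R^q$ identifies this with the structure $\mathcal L_{(A,B)}$ of Example \ref{ex:almab}.

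I expect the main obstacle to be the first step, namely forcing $\mmu$ into the abelian ideal $\ma$ (via $\mmu\subseteq\mz(\mg')=\mg'\subseteq\ma$) and then deducing the $\ad_b$-invariance of the complementary factor $\R^p=\ma\cap\mh$; once the orthogonal frame adapted to both the ideal $\ma$ and the subalgebra $\mh$ is in place, extracting $\theta(b)=-\tfrac1q\tr A$ from the trace-form relation of Corollary \ref{cor:3.4} is routine.
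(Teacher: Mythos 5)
Your proposal is correct and follows essentially the same route as the paper: both locate $\mmu$ inside the codimension-one abelian ideal via Theorem \ref{teo:UUbot}, deduce the block-diagonal form of $\ad_b$ from the fact that $\mmu$ is an ideal and $\mmu^\perp$ a subalgebra, extract the skew-symmetric part of $\ad_b|_\mmu$ from Theorem \ref{teo:UUbot}(2) together with \eqref{eq:weylc2}, and identify $\theta$ through the trace-form relation of Corollary \ref{cor:3.4}. The only cosmetic difference is that you justify $\mmu\subseteq\ma$ by noting $\mz(\mg')=\mg'\subseteq\ma$ explicitly, which the paper leaves implicit.
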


\begin{proof}
Every almost abelian metric Lie algebra $(\mg, g)$ can be written as a semidirect product $\mg:=\R b \ltimes_C \R^k$, where $\R^k$ is a codimension 1 abelian ideal, $b$ is a unit length vector orthogonal to $\R^k$, and $C:=\ad_b\in\mathfrak{gl}(k)$. The unimodularity condition is equivalent to $\tr (C)=0$.

If $(g,\theta,\mmu)$ is an LCP structure on $\mg$, then by Theorem \ref{teo:UUbot}, $\mmu\subset \mg'\subset \R^k$, so identifying $\mmu$ with $\R^q$ and its orthogonal in $\R^k$ with $\R^p$, we have $\mmu^\bot= \R b\oplus \R^p$. Note that since $\mmu$ is an ideal of $\mg$ and $\mmu^\bot$ is a subalgebra, $C$ preserves $\R^p$ and $\R^q$, so with respect to the orthogonal direct sum decomposition $\R^k=\R^p\oplus\R^q$, $C$ can be written in a block-diagonal form as 
$$C=\left[\begin{array}{c|c}  
 A &  \\ \hline 
  & D
\end{array}\right],$$
with $A\in\mathfrak{gl}(p)$ and $D\in\mathfrak{gl}(q)$. The trace form of the Lie algebra $\mmu^\bot=\R b\ltimes_B\R^p$ is $H^{\mmu^\bot}=\tr(A)g(b,\cdot)$.
By Lemma \ref{lm:beta0} and Corollary \ref{cor:3.4} we have $\theta=\theta|_{\mmu^\perp}=-\frac1qH^{\mmu^\bot}=-\frac1q \tr (A) g(b,\cdot)$ and $\ad_b|_\mmu-\theta(b)\Id_{\R^q}\in \so(q)$. Thus $D=\ad_b|_\mmu$ can be written as $B-\frac1q \tr(A)\Id_{\R^q}$ for some $B\in \so(q)$, showing that $(\mg,g,\theta,\mmu)$ is isomorphic to the LCP Lie algebra $\mathcal{L}_{(A,B)}$ constructed in Example \ref{ex:almab}.
\end{proof}

We finish this section by studying the amalgamated product of two unimodular almost abelian LCP Lie algebras. 

\begin{ex}\label{ex:amal1}
Let $(\mg_i, g_i, \theta_i, \mmu_i)$, for $i=1,2$, denote a unimodular almost abelian LCP Lie algebra. Then we may write $\mg_i=\R b_i\ltimes_{C_i} \R^{n_i}$, where $b_i\perp \R^{n_i}$,  $|b_i|=1$, and $C_i=\ad_{b_i}|_{\R^{n_i}}\in\gl(n_i)$. It follows from Corollary \ref{cor:almost-abelian} that $\theta_i^\sharp$ is a multiple of $b_i$; moreover, replacing $b_i$ by $-b_i$ if necessary, we may assume that  $\theta_i=|\theta_i|g_i(b_i,\cdot)$ for $i=1,2$. Since any LCP structure on a unimodular solvable Lie algebra is adapted (Lemma \ref{lm:Ubot}), we can define the amalgamated product of $(\mg_1,g_1,\theta_1,\mmu_1)$ and $(\mg_2,g_2,\theta_2,\mmu_2)$, with underlying Lie algebra denoted by $\mg$. Recall that $\mg=\ker(\theta_1-\theta_2)\subset \mg_1\oplus \mg_2$, where $\theta_1$ and $\theta_2$ denote the obvious extensions to $\mg_1\oplus \mg_2$ of $\theta_1\in\mg_1^*$ and $\theta_2\in\mg_2^*$.  Note that $|\theta_2|b_1+|\theta_1|b_2\in\mg$, $\R^{n_1}\oplus \R^{n_2}\subset \mg$ and, moreover, $|\theta_2|b_1+|\theta_1 |b_2 \perp \R^{n_1}\oplus \R^{n_2}$ with respect to $g=(g_1+g_2)|_\mg$. Furthermore, if we denote by $b$ the unit vector in the same direction as $|\theta_2|b_1+|\theta_1|b_2$, i.e.,  \begin{equation}\label{eq:b} 
b:=\frac{1}{(|\theta_1|^2+|\theta_2|^2)^\frac12}(|\theta_2|b_1+|\theta_1|b_2),\end{equation}
we have that $\mg$ is a unimodular almost abelian Lie algebra itself, since we may write 
\[ \mg= \R b  \ltimes_C (\R^{n_1}\oplus \R^{n_2}) ,\] where $C$ is the following matrix:
\begin{equation}\label{eq:C}
C=\frac{1}{(|\theta_1|^2+|\theta_2|^2)^\frac12} \left[ \begin{array}{c|c} |\theta_2| C_1 & \\ \hline
&  |\theta_1| C_2
\end{array} \right].
\end{equation}
 Clearly, $\tr (C)=0$.
 
The closed $1$-form $\theta$ that defines the LCP structure is a multiple of $g(b,\cdot)$. More explicitly, it follows from Corollary \ref{cor:almost-abelian} that $\mmu_i^\perp =\R b_i \oplus \R^{p_i}$ (orthogonal sum) and $|\theta_i|=-\frac{1}{q_i} \tr(A_i)$ where $q_i=\dim \mmu_i$ and $A_i=\ad_{b_i}|_{\R^{p_i}}:\R^{p_i}\to \R^{p_i}$. Note that $\tr(A_i)<0$. Since $\mmu$, the flat subspace of the LCP structure on $\mg$, is given by $\mmu=\mmu_1\oplus \mmu_2$, we have that $\theta=|\theta|g(b,\cdot)$, with
\[ |\theta| =-\frac{1}{q_1+q_2}(\tr(A_1) +\tr(A_2)).\]
\end{ex}

\medskip

\section{LCP structures with flat subspace of small codimension}

We start this section by showing that the flat subspace of a non-degenerate LCP structure on a unimodular solvable Lie algebra has codimension at least two.

\begin{pro}\label{pro:ubotgeq2}
Any non-degenerate LCP Lie algebra $(\mg,g,\theta,\mmu)$ on a unimodular solvable Lie algebra $\mg$ of dimension $n$ verifies $\dim \mmu\leq n-2$. 
\end{pro}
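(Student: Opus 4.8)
The plan is to argue that the orthogonal complement $\mmu^\bot$ is a \emph{non-unimodular} Lie algebra, and then to invoke the elementary fact that every Lie algebra of dimension at most $1$ is abelian, hence unimodular, to conclude $\dim\mmu^\bot\ge 2$, i.e. $\dim\mmu\le n-2$. In other words, the whole bound will be an immediate consequence of the adaptedness established in Lemma \ref{lm:Ubot} together with the trace-form identity of Corollary \ref{cor:3.4}.

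First I would record the cheap consequences of non-degeneracy and adaptedness. Since the structure is non-degenerate, $q:=\dim\mmu\ge 1$. By Lemma \ref{lm:Ubot}, the structure is adapted, so $\theta|_\mmu=0$. As $\mg=\mmu\oplus\mmu^\bot$ is an orthogonal decomposition and $\theta\ne 0$, this already forces $\theta|_{\mmu^\bot}\ne 0$; in particular $\mmu^\bot\ne 0$, so $\dim\mmu\le n-1$.

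The key step is to upgrade the codimension to at least $2$. For this I would apply Corollary \ref{cor:3.4}: since $\mg$ is unimodular, the trace form of the subalgebra $\mmu^\bot$ satisfies $H^{\mmu^\bot}=-q\,\theta|_{\mmu^\bot}$. Because $q\ge 1$ and $\theta|_{\mmu^\bot}\ne 0$ by the previous paragraph, we obtain $H^{\mmu^\bot}\ne 0$, so $\mmu^\bot$ is non-unimodular. A non-unimodular Lie algebra cannot have dimension $\le 1$, the unique such Lie algebra being abelian; hence $\dim\mmu^\bot=n-q\ge 2$, which is exactly $\dim\mmu\le n-2$.

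I do not expect a genuine obstacle here, since the argument is a two-line deduction from results already in hand; the only point needing a moment's care is the verification that $\theta|_{\mmu^\bot}\ne 0$, which follows at once from $\theta\ne 0$, $\theta|_\mmu=0$ and the orthogonal splitting $\mg=\mmu\oplus\mmu^\bot$. As a sanity check, one can reach the same conclusion through Corollary \ref{cor:almost-abelian}: were $\dim\mmu^\bot=1$, the algebra would be almost abelian with trivial $\R^p$-factor, forcing $\tr(A)=0$ and hence $\theta=-\tfrac1q\tr(A)g(b,\cdot)=0$, contradicting $\theta\ne 0$.
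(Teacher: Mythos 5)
Your proof is correct, and it takes a genuinely different route from the paper's. The paper also starts from Lemma \ref{lm:Ubot} to get adaptedness and $\dim\mmu\le n-1$, but then excludes the codimension-one case by invoking the structural result Theorem \ref{teo:UUbot}\,(2) (namely $\nabla^\theta_x=\ad_x$ on $\mmu$) together with \eqref{eq:weylc2} to write $\ad_{\theta^\sharp}|_\mmu=|\theta|^2\Id_\mmu+(\text{skew part})$, and computes $0=\tr\ad_{\theta^\sharp}=(n-1)|\theta|^2$ directly when $\mg=\R\theta^\sharp\oplus\mmu$. You instead bypass Theorem \ref{teo:UUbot} entirely: from Corollary \ref{cor:3.4} you get $H^{\mmu^\bot}=-q\,\theta|_{\mmu^\bot}\ne 0$, so $\mmu^\bot$ is a non-unimodular subalgebra and therefore has dimension at least $2$. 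Both arguments ultimately exploit unimodularity of $\mg$ to force a trace to vanish, but yours is lighter: it uses only Section~3 material once adaptedness is in hand, and it isolates the precise role of solvability (needed only through Lemma \ref{lm:Ubot}), so it would apply verbatim to any adapted non-degenerate LCP structure on a unimodular Lie algebra. The one small point to state explicitly is that $\mmu^\bot$ is indeed a subalgebra (Lemma \ref{lm:sub}), so that its trace form is defined; your closing sanity check via Corollary \ref{cor:almost-abelian} is also legitimate, since in the codimension-one case $\mmu$ would be a codimension-one abelian ideal, making $\mg$ almost abelian.
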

\begin{proof}Let $(\mg,g,\theta,\mmu)$ be a unimodular solvable  
LCP Lie algebra. By Lemma \ref{lm:Ubot}, the (non-zero) 1-form $\theta$ vanishes on $\mmu$, so $\dim \mmu\leq n-1$. 
Moreover, by Theorem \ref{teo:UUbot} (2), together with \eqref{eq:weylc2}, one can write:
\[
(\ad_{\theta^\sharp})|_\mmu=|\theta|^2\Id_\mmu+\beta(\theta^\sharp),
\]with $\beta(\theta^\sharp)\in\so(\mmu)$. 
If  $\dim \mmu=n-1$, then $\mg=\R \theta\oplus \mmu$, so as $\ad_{\theta^\sharp}(\theta^\sharp)=0$, the previous equation yields  $$0=\tr\ad_{\theta^\sharp}=(n-1)|\theta|^2,$$ contradicting the fact that $\theta\ne 0$. Thus $\dim \mmu\leq n-2$.
\end{proof}

We will now show that every unimodular solvable Lie algebra $\mg$ carrying an LCP structure $(g,\theta,\mmu)$ such that the codimension of the flat subspace is 2, is almost abelian.

\begin{pro}\label{pro:ubot2} Any solvable unimodular Lie algebra $\mg$ of dimension $n\ge 3$ admitting  an LCP structure $(g,\theta,\mmu)$ with $\dim \mmu=n-2$ is almost abelian. 
\end{pro}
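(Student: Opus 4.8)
The plan is to combine the structure theorem for LCP structures on unimodular solvable Lie algebras with the fact that there is essentially only one non-unimodular Lie algebra in dimension two. First I would record that since $\dim\mmu=n-2\ge 1$, the structure is non-degenerate, so Theorem \ref{teo:UUbot} applies: $\mmu$ is an abelian ideal of $\mg$ with $\nabla^\theta_xy=\ad_xy$ for all $x\in\mg$ and $y\in\mmu$, while $\mh:=\mmu^\perp$ is a subalgebra of dimension $2$ by Lemma \ref{lm:sub}. By Corollary \ref{cor:3.4} we have $H^{\mh}=-q\,\theta|_{\mh}$, which is non-zero because $\theta\neq 0$ vanishes on $\mmu$ (Lemma \ref{lm:Ubot}); hence $\mh$ is non-unimodular. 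As the only two-dimensional Lie algebras are the abelian one (which is unimodular) and $\mathfrak{aff}(\R)$, I conclude $\mh\cong\mathfrak{aff}(\R)$, so that $\mh'$ is one-dimensional; I would then write $\mh'=\R y$ and complete to a basis $\{b,y\}$ of $\mh$.

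The core of the argument is to exhibit a codimension-one abelian ideal. I would set $\mn:=\R y\oplus\mmu$, a subspace of dimension $1+(n-2)=n-1$ (the sum is direct since $y\in\mh$ is orthogonal to $\mmu$). The key input making $\mn$ abelian is that $\ad_x|_\mmu=0$ for every $x\in\mg'$: indeed, by Theorem \ref{teo:UUbot}(2) one has $\ad_xu=\nabla^\theta_xu$ on $\mmu$, and by Lemma \ref{lm:beta0} this vanishes whenever $x\in\mg'$. Since $\mh'=\R y\subset\mg'$, this yields $[y,\mmu]=0$; together with $[\mmu,\mmu]=0$ (Theorem \ref{teo:UUbot}(1)) and $[y,y]=0$, this shows that $\mn$ is abelian.

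It then remains to check that $\mn$ is an ideal. Writing $\mg=\R b\oplus\mn$, I would verify $[\mg,\mn]\subseteq\mn$: we have $[b,y]\in\mh'=\R y\subset\mn$ because $\mh$ is a subalgebra with one-dimensional derived algebra $\mh'=\R y$, and $[b,\mmu]\subseteq\mmu\subset\mn$ because $\mmu$ is an ideal of $\mg$; the remaining brackets $[\mn,\mn]$ vanish by abelianness. Hence $\mn$ is a codimension-one abelian ideal of $\mg$, so $\mg$ is almost abelian, as claimed.

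The argument has no serious obstacle once the structure theorem is in place: the two-dimensionality of $\mh$ rigidifies it completely, and the vanishing $[\mg',\mmu]=0$ does all the work. The only points requiring care are the identification of $\mh$ as non-abelian, which genuinely uses non-unimodularity via Corollary \ref{cor:3.4} together with $\theta\neq 0$, and keeping the orthogonal decomposition $\mg=\R b\oplus\R y\oplus\mmu$ consistent; the remaining bracket computations are routine bookkeeping.
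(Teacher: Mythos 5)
Your proof is correct, and its overall strategy coincides with the paper's: both rest on Theorem \ref{teo:UUbot} (which gives that $\mmu$ is an abelian ideal with $[\mg',\mmu]=0$) and both exhibit the codimension-one abelian ideal $\mh'\oplus\mmu$, where $\mh=\mmu^\perp$ is the two-dimensional orthogonal complement. The one step where you genuinely diverge is in showing that $\mh'$ is a (nonzero) line contained in $\ker\theta$: the paper takes an orthonormal basis $\{b,x\}$ of $\mh$ with $\theta(x)=0$, uses $\ad_{[b,x]}|_\mmu=0$ to kill the symmetric part $\theta([b,x])\Id_\mmu$ of $\ad_{[b,x]}|_\mmu$ and deduce $[b,x]=\lambda x$, and then computes $\lambda=-(n-2)\mu\neq 0$ from unimodularity of $\mg$; you instead combine Corollary \ref{cor:3.4} with $\theta|_{\mmu^\perp}\neq 0$ to conclude that $\mh$ is non-unimodular, hence isomorphic to $\mathfrak{aff}(\R)$ by the classification of two-dimensional Lie algebras, so $\mh'$ is automatically one-dimensional (and lies in $\mg'\subset\ker\theta$, as you need). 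Your route is a little more conceptual and skips the eigenvalue computation; the paper's computation has the side benefit of producing the explicit normal form $\ad_b=-(n-2)\mu\,x\otimes x+\mu\Id_\mmu+B$, which is reused later (e.g.\ in Propositions \ref{pro:dim3} and \ref{pro:dim=n-2}). Both arguments are complete.
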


\begin{proof}
Since $\mg$ is solvable and  unimodular, Theorem \ref{teo:UUbot} shows that $\mg$ has a semidirect product decomposition as
$\mg=\mmu^\bot \ltimes \mmu$ such that $\mmu\subset\mg'$ and $[\mmu,\mg']=0$. In addition, by \eqref{eq:weylc2} we have
\begin{equation}\label{eq:adad}
\ad_z|_\mmu-\theta(z)\Id_\mmu=\nabla_z^\theta|_\mmu-\theta(z)\Id_\mmu\in\so(\mmu)\qquad \forall\; z\in \mmu^\bot. 
\end{equation}

Let $\{b,x\}$ be an orthonormal basis of $\mmu^\bot$ such that $\theta(b)=:\mu>0$, $\theta(x)=0$. Since $\mmu^\bot$ is a subalgebra,  $[b,x]$ is a linear combination of $b$ and $x$. However, using that $[\mmu,\mg']=0$ we get
$$
\ad_{[b,x]}|_\mmu=0,
$$
so, in particular, the symmetric part of $\ad_{[b,x]}|_\mmu$ (which is $\theta([b,x])$ by \eqref{eq:adad} and the choice of $b$), vanishes. Hence, $[b,x]=\lambda x$ for some $\lambda\in\R$. In fact, since $\mg$ is unimodular and $\tr\ad_b|_\mmu=(n-2) \mu$ (see \eqref{eq:adad}), we have $[b,x]=- (n-2)\mu\,x$, which is non-zero. 

In particular, the above shows that $x\in\mg'$ and thus $x$ commutes with $\mmu$. Therefore, $\mk:=\R x\oplus \mmu$ is an abelian ideal of $\mg$, so that  $\mg$ is an almost abelian Lie algebra.
\end{proof}

The next result treats the case where the flat space $\mmu$ has codimension 3. 

\begin{pro}\label{pro:ubot3}
Let $\mg$ be a unimodular solvable  Lie algebra of dimension $n$ admitting a non-degenerate LCP structure $(g,\theta,\mmu)$ with $\dim \mmu=n-3$. Then  either $\mg$ is almost abelian, or else $n\geq 5$ and there is an orthonormal basis $\{b,x,y\}$ of $\mmu^\bot$ such that $\theta(b)=\mu>0$,  $\theta(x)=\theta(y)=0$ and the  matrices of the adjoint maps in the basis $\{b,x,y\}$ verify
\[ \ad_b = \left[\begin{array}{ccc|c}  
 0 &0&0& \\
 0&(3-n)\mu & r &  \\ 
 0&0 & 0 &  \\ \hline 
 &&  & \mu \Id_\mmu+B_1 
\end{array}\right], \quad 
\ad_y = \left[\begin{array}{ccc|c}  
  0 & 0 & 0 & \\
 -r & 0 & 0 &  \\ 
 0 & 0 & 0 &   \\ \hline 
 & &  & B_2
\end{array}\right], \]
where $r\in\R$, $B_1,B_2\in\so(\mmu)$, $B_2\neq 0$ and $[B_1,B_2]=0$. In addition, the LCP Lie algebra is isomorphic to $\mathcal C_{A,B_1,B_2,v}$ in Example $\ref{ex:noalmab}$, where $A:=(n-3)\mu\in\R=\mathfrak{gl}(1)$ and $v:=rx$. 
\end{pro}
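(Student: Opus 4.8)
The plan is to exploit the rigid structure imposed by Theorem \ref{teo:UUbot}: since $\mg$ is solvable unimodular and $(\mg,g,\theta,\mmu)$ is a non-degenerate LCP structure, $\mmu$ is an abelian ideal contained in $\mz(\mg')$, $\mg=\mmu^\bot\ltimes\mmu$, and $\ad_z|_\mmu-\theta(z)\Id_\mmu\in\so(\mmu)$ for every $z\in\mmu^\bot$ (this last from \eqref{eq:weylc2} and part (2) of the theorem). Here $\dim\mmu^\bot=3$. First I would choose an orthonormal basis $\{b,x,y\}$ of the non-unimodular subalgebra $\mh:=\mmu^\bot$ adapted to $\theta$: since $\theta|_\mmu=0$ (Lemma \ref{lm:Ubot}) the dual $\theta^\sharp$ lies in $\mh$, so after rescaling I may take $b:=\theta^\sharp/|\theta|$ with $\theta(b)=\mu>0$ and pick $x,y$ spanning $\ker(\theta|_\mh)$, giving $\theta(x)=\theta(y)=0$.

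**Reducing the bracket structure on $\mh$.** The key computational input is that $[\mmu,\mg']=0$, hence $\ad_{w}|_\mmu=0$ for every $w\in\mh'=[\mh,\mh]$, so by the displayed skew-symmetry relation the symmetric part $\theta(w)\Id_\mmu$ vanishes, forcing $\theta|_{\mh'}=0$ — which merely re-expresses closedness of $\theta$. The real work is analysing the $3$-dimensional solvable non-unimodular Lie algebra $\mh$ with distinguished covector $\theta$. I would compute $H^\mh$ and use $\theta=-\frac1q H^\mh|_\mh$ (Corollary \ref{cor:3.4}, together with Lemma \ref{lm:beta0}/\ref{lm:Ubot}) to pin down the trace of $\ad_b$ on $\mmu$, namely $\tr(\ad_b|_\mmu)=(n-3)\mu$ since $\dim\mmu=n-3$. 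Classifying the possible bracket relations among $b,x,y$ — subject to solvability, to $\ad_b|_\mmu=\mu\Id+B_1$ with $B_1\in\so(\mmu)$, and to $x,y\in\ker\theta$ — should force the stated normal forms: the $3\times 3$ blocks of $\ad_b$ and $\ad_y$ reflect that $[\mh,\mh]\subset\ker\theta=\spam\{x,y\}$, that unimodularity fixes the $(2,2)$ entry of $\ad_b$ as $(3-n)\mu$, and that $r:=$ coefficient of $[b,y]$ along $x$ is the only remaining free bracket. I expect the commuting relation $[B_1,B_2]=0$ and $B_2\neq 0$ to emerge exactly as in Example \ref{ex:noalmab}: $B_1=\beta(b)$, $B_2=\beta(y)$ come from the orthogonal representation $\beta$ of Corollary \ref{cor:11}, and $\beta([b,y])=[\beta(b),\beta(y)]$ together with $\beta|_{\mh'}=0$ yields $[B_1,B_2]=0$.

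**The dichotomy and the identification.** I would then split into cases according to whether $B_2=0$. If $B_2=0$ (equivalently $\ad_y|_\mmu=0$), I expect $\R y\oplus\mmu$ to become an abelian ideal of codimension $2$ whose extension by $b$ is abelian, so $\mg$ is almost abelian — handled by reduction to the codimension-$2$ analysis underlying Proposition \ref{pro:ubot2}. If $B_2\neq0$, the nonvanishing of $B_2$ prevents $\mg$ from being almost abelian (the same obstruction argument as in Example \ref{ex:noalmab}, where no codimension-$1$ abelian ideal exists), forcing $n\geq 5$ since $B_2\in\so(\mmu)$ nonzero requires $\dim\mmu\geq 2$. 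Finally, setting $A:=(n-3)\mu\in\gl(1)$ and $v:=rx$, I would match the brackets to the defining relations of $\mathcal C_{(A,B_1,B_2,v)}$ in Example \ref{ex:noalmab}, reading off $(\ad_b)|_{\R^p}=A$, $[b,y]=v=rx$, $(\ad_b)|_{\R^q}=B_1-\frac1q\tr(A)\Id$, and $(\ad_y)|_{\R^q}=B_2$.

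**Main obstacle.** The hard part will be the bracket classification on $\mh$: ruling out a nonzero $[b,x]$-component along $b$ or $x$ and a nonzero $[x,y]$, i.e. showing that solvability plus the $\so(\mmu)$-action constraints force precisely the single off-diagonal parameter $r$ in $\ad_y$ and the stated zero pattern. The delicate point is that $x$ is distinguished only as lying in $\ker\theta\cap\mg'$; establishing $[b,x]=(3-n)\mu\,x+$ (no $b$ or $y$ terms) and $[x,\cdot]=0$ on all of $\mh$ requires carefully combining the unimodularity trace conditions, the nilpotency of $\mg'=\mv_1\oplus\mv_2$ from the proof of Theorem \ref{teo:UUbot}, and the fact that $\ad_x|_\mmu$ is simultaneously skew-symmetric (up to $\theta(x)\Id=0$) and vanishes because $x\in\mg'$ commutes with $\mmu$.
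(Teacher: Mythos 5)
Your overall strategy coincides with the paper's: invoke Theorem \ref{teo:UUbot} to write $\mg=\mmu^\perp\ltimes\mmu$ with $\mmu$ abelian and $\ad_z|_\mmu=\theta(z)\Id_\mmu+\beta(z)$, take $b$ proportional to $\theta^\sharp$ and $x,y$ spanning $W:=\ker\theta\cap\mmu^\perp\supset[\mmu^\perp,\mmu^\perp]$ with $x\in[\mmu^\perp,\mmu^\perp]$, use unimodularity to fix traces, and split on whether $B_2:=\beta(y)$ vanishes; the derivations of $[B_1,B_2]=0$ from $\beta([b,y])=0$ and of $n\ge 5$ from $B_2\in\so(\mmu)\setminus\{0\}$ are exactly as in the paper. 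However, two steps are not actually carried out, and one of them is stated incorrectly.

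First, in the branch $B_2=0$ you assert that $\R y\oplus\mmu$ is an abelian ideal of codimension $2$ ``whose extension by $b$ is abelian''. This is wrong on both counts: $\R y\oplus\mmu$ need not be an ideal (one can have $[b,y]=rx\notin\R y\oplus\mmu$), and adjoining $b$ cannot give an abelian algebra since $\ad_b|_\mmu=\mu\Id_\mmu+B_1\ne 0$. The codimension-one abelian ideal witnessing almost-abelianness is $W\oplus\mmu$, and to see it is abelian one must still prove $[x,y]=0$. The paper gets this either for free (if $[\mmu^\perp,\mmu^\perp]=W$ is $2$-dimensional, it is nilpotent of dimension $2$, hence abelian) or, when $[\mmu^\perp,\mmu^\perp]=\R x$, by writing $[y,x]=sx$ and computing $0=\tr\ad_y=s+\tr\beta(y)=s$. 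The appeal to ``the codimension-$2$ analysis underlying Proposition \ref{pro:ubot2}'' is not applicable here.

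Second, the step you label the ``main obstacle'' — forcing $[b,x]\in\R x$, $[b,y]\in\R x$ and $[x,y]=0$ — is the actual content of the proof and is not resolved by the combination of tools you list. The missing observation is elementary: closedness of $\theta$ gives $[\mmu^\perp,\mmu^\perp]\subset W$, and if $B_2=\beta(y)\ne 0$ then $y\notin\mg'$ (every element of $\mg'$ acts trivially on $\mmu$ by Theorem \ref{teo:UUbot} and Lemma \ref{lm:beta0}), so $[\mmu^\perp,\mmu^\perp]$ is exactly the line $\R x$. Hence all brackets of $\mmu^\perp$ automatically lie in $\R x$, and the two remaining coefficients are pinned down by $\tr\ad_y=0$ (giving $[x,y]=0$) and by $\tr\ad_b=0$ together with $\tr(\ad_b|_\mmu)=(n-3)\mu$ (giving $[b,x]=(3-n)\mu\,x$). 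Without this dimension count on the commutator, the claimed zero pattern of $\ad_b$ and $\ad_y$ does not follow.
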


\begin{proof}
We proceed as in the previous proposition. By Theorem \ref{teo:UUbot}, $\mg=\mmu^\bot\ltimes \mmu$, where $\mmu$ is an abelian ideal and $\mmu^\bot$ is a non-unimodular solvable Lie subalgebra of dimension 3 such that $\theta|_\mmu=0$ (see Lemma \ref{lm:Ubot}). 

Define $W:=\ker \theta\cap \mmu^\bot$, and notice that $0\neq [\mmu^\bot,\mmu^\bot]\subset W$, since $\theta$ is closed. Let $b$ be a unit element in $W^\bot$ such that $\theta(b)=:\mu>0$. Since $[\mmu^\bot,\mmu^\bot]\subset W$, $\ad_b$ preserves $W$ and
\begin{equation}\label{eq:adth}
\ad_b=(\ad_b)|_W+\mu\Id_\mmu+\beta(b),
\end{equation}where $\beta:\mg\to\so(\mmu)$ is a Lie algebra representation. Unimodularity implies 
\begin{equation}
    \label{eq:traza}
-\tr (\ad_b)|_W=\tr (\ad_b)|_\mmu=\mu(n-3).
\end{equation}

Fix an orthonormal basis $\{x,y\}$  of $W$ such that $x\in [\mmu^\bot,\mmu^\bot].$ We claim that $W$ is an abelian subalgebra. 

If $W=[\mmu^\bot,\mmu^\bot]$, it is straightforward that it is abelian. Indeed, the commutator of a solvable Lie algebra $\mmu^\bot$ is nilpotent, but being of dimension $\leq 2$, it must be abelian.

If $W\neq[\mmu^\bot,\mmu^\bot]$, then $[b,y]=r x$ and $[y,x]=s x$ for some $r,s\in\R$ so that $\tr (\ad_y)|_{\mmu^\bot}=s$. However, since $ \theta(y)=0$, we have 
\begin{equation}\label{eq:betay}
(\ad_y)|_\mmu=\beta(y)\in \so(\mmu),
\end{equation} and thus $0=\tr (\ad_y)|_\mmu=-\tr(\ad_y)|_{\mmu^\bot}=s$. Therefore, $x$ and $y$ commute and $W$ is abelian in this case as well, proving our claim.

Recall that $[\mg',\mmu]=0$. Therefore,  if either $W=[\mmu^\bot,\mmu^\bot]$, or $W\neq [\mmu^\bot,\mmu^\bot]$ and $\beta(y)=0$ in \eqref{eq:betay}, then $\mk:=W\oplus \mmu$ is an abelian ideal of $\mg$. Hence $\mg$ is almost abelian in these cases.

Assume now that $W\neq [\mmu^\bot,\mmu^\bot]$ and $\beta(y)\neq 0$ in \eqref{eq:betay}; notice that the latter implies $\dim\mmu\geq 2$ and thus $n\geq 5$.  Since the commutator of $\mmu^\bot$ is 1-dimensional, we get from \eqref{eq:traza}:
\[
\ad_b=-(n-3)\mu x\otimes x+ry\otimes x+\mu \Id_\mmu+\beta(b).
\]
It is possible to check that this LCP structure arises from Example \ref{ex:noalmab} by taking
$q=n-3$, $p=1$ and $\R^p=\R x$, $A=(n-3)\mu$, $v=rx$, $B_1=\beta(b)$ and $B_2=\beta(y)$. 
\end{proof}

\medskip

\section{Low dimensional LCP Lie algebras}\label{sec:classification}

In this section we study low dimensional solvmanifolds equipped with LCP structures. We begin by classifying the unimodular solvable Lie algebras of dimension at most 5 that admit LCP structures. In the second part of the section we deal with the existence of lattices in the associated Lie groups. Since the existence of lattices does not depend on the left invariant Riemannian metric on the group, we will be interested only in the isomorphism classes of LCP Lie algebras, rather than isometry classes.

The summary of the classification will be given in Tables \ref{table:dim3}-\ref{table:dim5-1} in the Appendix, where the name of each Lie algebra and the (non-)existence of lattices in the associated simply connected Lie group is provided.

\begin{pro}\label{pro:dim3} Let $\mg$ be a  $3$-dimensional unimodular solvable Lie algebra. Then $\mg$ admits a non-degenerate LCP structure if and only if it is isomorphic to the Lie algebra $\mathfrak{e}(1,1)$ in Table $\ref{table:dim3}$.
\end{pro}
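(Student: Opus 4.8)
The plan is to funnel everything through the codimension results of Section 6 and the almost-abelian classification of Section 5, so that the proposition reduces to a short dimension count. I would begin with the forward implication. Suppose $(\mg,g,\theta,\mmu)$ is a non-degenerate LCP structure with $\mg$ a $3$-dimensional unimodular solvable Lie algebra. Non-degeneracy means $\dim\mmu\geq 1$, while Proposition \ref{pro:ubotgeq2} forces $\dim\mmu\leq n-2=1$; hence $\dim\mmu=1$. Since this is precisely the codimension-two case, Proposition \ref{pro:ubot2} applies and shows that $\mg$ is almost abelian.

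Next I would invoke Corollary \ref{cor:almost-abelian} to put the structure in normal form: $\mg=\R b\ltimes(\R^p\oplus\R^q)$ with $\mmu=\R^q$, and $\ad_b$ block-diagonal with blocks $A\in\gl(p)$ satisfying $\tr A\neq 0$ and $B-\frac1q\tr(A)\Id_{\R^q}$ for some $B\in\so(q)$. The numerics are then immediate: $q=\dim\mmu=1$ forces $B\in\so(1)=0$, and the total dimension $1+p+q=3$ forces $p=1$, so $A=a\in\R\setminus\{0\}$. Hence $\ad_b$ acts on the abelian ideal $\R^2$ as $\diag(a,-a)$. Rescaling $b\mapsto b/a$ is a Lie algebra isomorphism carrying this to $\diag(1,-1)$, the defining bracket of $\mathfrak{e}(1,1)$, which proves $\mg\cong\mathfrak{e}(1,1)$.

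The converse is a one-line construction. Taking $p=q=1$, $A=1\in\gl(1)$ (so that $\tr A=1\neq 0$) and $B=0\in\so(1)$ in Example \ref{ex:almab} produces, via Proposition \ref{pro:cnstr}, a non-degenerate unimodular LCP structure $\mathcal L_{(1,0)}$ on $\R b\ltimes_{\diag(1,-1)}\R^2=\mathfrak{e}(1,1)$. This closes the equivalence.

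There is no serious obstacle beyond bookkeeping, since the real content is already contained in Propositions \ref{pro:ubotgeq2} and \ref{pro:ubot2} and Corollary \ref{cor:almost-abelian}. The only steps that call for more than mechanical checking are the identification of $\diag(1,-1)$ with the named algebra $\mathfrak{e}(1,1)$ in Table \ref{table:dim3}, and the implicit confirmation that the remaining $3$-dimensional unimodular solvable algebras are genuinely excluded: the abelian case carries only degenerate structures by Corollary \ref{cor:abelian}, the Heisenberg algebra is ruled out because LCP structures on nilpotent algebras are degenerate, and $\mathfrak{e}(2)$ is ruled out because its generator acts on the abelian ideal as a non-trivial rotation, which has no real eigenline and therefore admits no $\ad_b$-invariant one-dimensional $\mmu$.
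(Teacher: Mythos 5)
Your proposal is correct and follows essentially the same route as the paper: Proposition \ref{pro:ubotgeq2} pins down $\dim\mmu=1$, Proposition \ref{pro:ubot2} and Corollary \ref{cor:almost-abelian} give the almost abelian normal form with $\ad_b=\diag(a,-a)$ on the abelian ideal, a rescaling of $b$ identifies the algebra with $\mathfrak{e}(1,1)$, and the converse is Example \ref{ex:almab}. The closing paragraph ruling out the other $3$-dimensional algebras individually is harmless but redundant, since the forward direction already excludes them.
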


\begin{proof}
Every  non-degenerate LCP Lie algebra $(\mg,g,\theta,\mmu)$ with $\dim \mg=3$, verifies $\dim \mmu^\bot=2$ and $\dim \mmu=1$ due to Proposition \ref{pro:ubotgeq2}.

From Corollary \ref{cor:almost-abelian} it follows that $\mg$ is almost abelian and by Proposition \ref{pro:ubot2}, it can be written as an orthogonal semidirect product $\R b\ltimes(\R x\oplus \R u)$, where $b,x,u$ are of unit length and 
\[
\ad_{b}=\lambda x\otimes x-\lambda u\otimes u,\qquad [x,u]=0,
\]for some $\lambda\neq 0$. Taking $e_1=\frac{1}{\lambda}b$, $e_2=x$ and $e_3=u$, we see that $\mg$ is isomorphic to $\mathfrak{e}(1,1)$.

Conversely, $\mathfrak{e}(1,1)$ is isomorphic to the Lie algebra of $\mathcal L_{(\lambda \Id_\R,0)}$ in Example \ref{ex:almab}, so it admits an LCP structure.
\end{proof}

The LCP structure obtained on $\mathfrak{e}(1,1)$ coincides with the one  constructed by Matveev and Nikolayevsky in \cite{MN2015}.

We consider now the $4$-dimensional case. In the next result we will use the notation of Bock in \cite{Bo16} which, in turn, is borrowed from \cite{Mu}.

\begin{pro} \label{pro:dim4}
Let $\mg$ be a $4$-dimensional unimodular solvable Lie algebra. Then $\mg$ admits a non-degenerate LCP structure if and only if $\mg$ is isomorphic to one of the Lie algebras in Table $\ref{table:dim4}$. 
\end{pro}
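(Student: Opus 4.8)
\smallskip

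The plan is to reduce the whole statement to the almost abelian case, and then read off the adjoint matrix $\ad_b$ from Corollary~\ref{cor:almost-abelian}. Let $(\mg,g,\theta,\mmu)$ be a non-degenerate LCP structure on a $4$-dimensional unimodular solvable $\mg$. By Proposition~\ref{pro:ubotgeq2} we have $\dim\mmu\le n-2=2$, while non-degeneracy gives $\dim\mmu\ge 1$; hence $\dim\mmu\in\{1,2\}$. If $\dim\mmu=2$, Proposition~\ref{pro:ubot2} says directly that $\mg$ is almost abelian. If $\dim\mmu=1$, so $\dim\mmu^\bot=3$, then Proposition~\ref{pro:ubot3} applies, and its second alternative would force $n\ge 5$, which is impossible; thus $\mg$ is again almost abelian. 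So in dimension $4$ every unimodular solvable Lie algebra carrying a non-degenerate LCP structure is almost abelian.

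Next I would invoke Corollary~\ref{cor:almost-abelian} to write $\mg=\R b\ltimes_C(\R^p\oplus\R^q)$ with $q=\dim\mmu$, $p=3-q$, and
\[
C=\ad_b|_{\R^p\oplus\R^q}=\left[\begin{array}{c|c} A & \\ \hline & B-\frac1q\tr(A)\Id_{\R^q}\end{array}\right],\qquad A\in\gl(p),\ \tr(A)\ne 0,\ B\in\so(q).
\]
Since the isomorphism type of an almost abelian Lie algebra $\R\ltimes_C\R^3$ is determined by the conjugacy class of $C$ up to a nonzero real scaling, the classification amounts to listing the real Jordan forms of traceless $3\times3$ matrices $C$ compatible with this block shape. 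The decisive constraint is that $C$ must admit a $C$-invariant $q$-dimensional subspace on which it acts as $s\,\Id+(\text{skew})$ for a nonzero scalar $s:=-\frac1q\tr(A)$, with complementary invariant subspace of nonzero trace $-qs$. Running through $(p,q)=(2,1)$ and $(p,q)=(1,2)$ I would obtain exactly: (i) three real eigenvalues summing to zero and not all zero (a one-parameter family up to scaling, containing in particular $\mathfrak{e}(1,1)\oplus\R$ and $\diag(1,1,-2)$); (ii) a size-two Jordan block together with a third eigenvalue (which can only come from $(p,q)=(2,1)$, since the $q=2$ block $s\,\Id+B$ is forced to be normal); and (iii) one real eigenvalue together with a genuinely complex conjugate pair of nonzero real part (again a one-parameter family). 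The requirement $s\ne 0$ is exactly what excludes the nilpotent matrices and those with spectrum $\{0,\pm ci\}$, i.e.\ precisely the almost abelian algebras that carry no non-degenerate LCP structure. Matching these normal forms against the list of \cite{Bo16} produces the entries of Table~\ref{table:dim4}.

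For the converse I would reverse the construction of Example~\ref{ex:almab}: each algebra in Table~\ref{table:dim4} is $\R\ltimes_C\R^3$ with $C$ of one of the three normal forms above, so taking $A$ to be the block with nonzero trace and $B\in\so(q)$ the skew part of the remaining block exhibits it as an explicit $\mathcal L_{(A,B)}$, which is LCP by Proposition~\ref{pro:cnstr}.

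The step I expect to be the main obstacle is the bookkeeping in the middle paragraph: converting the abstract ``scalar-plus-skew'' condition into a clean, non-redundant list of real conjugacy classes of traceless $3\times 3$ matrices up to scaling, correctly separating the diagonalizable repeated-eigenvalue case from the genuine Jordan case (only the latter being unattainable with $q=2$), and then matching each class with a single named Lie algebra in \cite{Bo16}, taking care that a given algebra may be produced by either value of $(p,q)$ and must not be counted twice.
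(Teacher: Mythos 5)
Your proposal is correct and follows essentially the same route as the paper: reduce to the almost abelian case via Propositions \ref{pro:ubotgeq2}, \ref{pro:ubot2} and \ref{pro:ubot3}, apply Corollary \ref{cor:almost-abelian} to get the block form of $\ad_b$, classify by the real Jordan form of the traceless $3\times 3$ matrix (split into $(p,q)=(1,2)$ and $(2,1)$), and obtain the converse from Example \ref{ex:almab} and Proposition \ref{pro:direct}. The only part you leave sketched --- the explicit basis changes matching each normal form to a named algebra in \cite{Bo16} --- is precisely the bookkeeping the paper carries out case by case, and your description of the three resulting families (including the exclusion of the nilpotent and $\{0,\pm ci\}$ spectra and the observation that the Jordan-block case only arises for $q=1$) is accurate.
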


\begin{proof}
If $\mg$ is of dimension 4 and admits an LCP structure $(g,\theta,\mmu)$, then $\dim \mmu$ is either 1 or 2, in view of Proposition \ref{pro:ubotgeq2}. In addition, by Propositions \ref{pro:ubot2} and \ref{pro:ubot3}, we have that $\mg$ is almost abelian. Therefore, by Corollary \ref{cor:almost-abelian}, $\mg$ can be written as an orthogonal semidirect sum $\mg=\R b\ltimes (\R^p\oplus\R^q)$, where $p+q=3$ and 
\begin{equation}
\ad_b|_{\R^p\oplus\R^q}= \left[\begin{array}{c|c}  
 A &  \\ \hline 
  & B-\frac1q\tr(A)\Id_{\R^q}
\end{array}\right],
\end{equation}
for some $A\in \gl(p)$ with $\tr(A)\neq 0$ and $B\in \so(q)$.

Assume first that $p=1$, $q=2$ and consider an orthonormal basis  $\{b,x,u,v\}$ of $\mg$ such that $\R^p=\R x$, $\R^q={\rm span}\{u,v\}=\mmu$, and  the action of $b$ on $\R^p \oplus \R^q$ is given by 
\begin{equation}\label{eq:3x3i}
\ad_b|_{\R^p \oplus\R^q}= \left[\begin{array}{c|cc} 
\lambda &&\\\hline
&-\frac{\lambda}{2} & -a \\
& a & -\frac{\lambda}{2}   \end{array}\right], 
\end{equation}  
for some $\lambda,a\in\R$, $\lambda\neq 0$.

If $a=0$, taking $e_1=\frac{1}{\lambda}b$, $e_2=x,\, e_3=u$ and $e_4=v$ we obtain the Lie algebra $\mg_{4.5}^{-\frac12,-\frac12}$. When $a\neq 0$, taking $e_1=-\frac{1}{a}b$, $e_2=x,\, e_3=u$ and $e_4=v$ we obtain the Lie algebra $\mg_{4.6}^{-2p,p}$ from Table 2 for $p=\frac{\lambda}{2a}\neq 0$.

Suppose now that $p=2$, $q=1$ and let  $\{b,x,y,u\}$ be an orthonormal basis such that  $\R^p={\rm span}\{x,y\}$ and $\R^q=\R u$, 
\[
\ad_b|_{\R^p\oplus \R^q}= \left[\begin{array}{c|c}  
 A&  \\ \hline 
  &-\tr(A) 
\end{array}\right], 
\] 
for some $A\in \gl(2)$ such that $\tr(A)\neq 0$. 

We consider now the possible Jordan forms of the matrix $A$ to identify the structure of the Lie algebra $\mg$.

Assume first that $A$ has real eigenvalues and it is diagonalizable, so that in a certain basis it has the form
\[
A=\begin{bmatrix} \alpha & 0\\0 & \beta\end{bmatrix}
    \]
    for some $\alpha,\beta\in \R$ such that $\alpha+\beta\neq 0$. If $\alpha=0$, so that $\beta\neq 0$, replacing $b$ with $\frac{b}{\beta}$ we obtain the Lie algebra $\mathfrak{e}(1,1)\oplus \R$. The same happens if $\beta=0$ and $\alpha\neq 0$. 
    However, if $\alpha\neq 0$ and $\beta\neq 0$, with $\alpha+\beta\neq 0$, setting $e_1=-\frac{1}{\alpha+\beta}b$, $e_2=u$, $e_3=x$ and $e_4=y$, we obtain that $\mg$ is isomorphic to $\mg_{4.5}^{p,-p-1}$ in Table \ref{table:dim4} for $p=-\frac{\alpha}{\alpha+\beta}$.
    
When $A$ has real eigenvalues but it is not diagonalizable then we may assume 
\[A=\begin{bmatrix} \alpha & 1\\ 0& \alpha \end{bmatrix}
    \]
    for some $0\neq \alpha\in \R$. Setting $e_1=\frac{b}{\alpha}$, $e_2=\frac{x}{\alpha}$, $e_3=y$ and $e_4=u$ we obtain the Lie algebra $\mg_{4.2}^{-2}$ in Table \ref{table:dim4}.

Finally, if $A$ has non-real eigenvalues then we may assume \[A=\begin{bmatrix} \alpha & \beta\\ -\beta & \alpha\end{bmatrix}\] for some $\alpha,\beta\in \R$ with $\alpha\neq 0$ and $\beta\neq 0$. Setting $e_1=\frac{b}{\beta}$, $e_2=u$, $e_3=x$ and $e_4=y$ we obtain the Lie algebra $\mg_{4.6}^{-2p,p}$ in Table \ref{table:dim4} for $p=\frac{\alpha}{\beta}\neq 0$.

Conversely, an explicit LCP structure on $\mathfrak{e}(1,1)\oplus \R$ can be given by extending the LCP structure on $\mathfrak{e}(1,1)$ using Proposition \ref{pro:direct}. The remaining Lie algebras listed above are isomorphic to $\mathcal L_{(A,B)}$ in Example \ref{ex:almab}, for certain $A\in \gl(p)$ with $\tr(A)\neq 0$ and $B\in\so(q)$; each one of them admits non-degenerate LCP structures.
\end{proof}

The summary of the $4$-dimensional case is given in Table \ref{table:dim4} in the Appendix.

Notice that the Lie algebras $\mg_{4.5}^{-\frac12,-\frac12}$ and $\mg_{4.6}^{-2p,p}$ admit LCP structures with dimension of the flat factor either 1 or 2, whilst all the other Lie algebras in the table admit only one possibility for the dimension of $\mmu$.

\

We consider next the $5$-dimensional case. We will divide it in $3$ different cases, depending on the dimension of the flat subspace $\mmu$, which according to Proposition \ref{pro:ubotgeq2} can be equal to $1,\,2$ or $3$.

\begin{pro}\label{pro:dim5-3}
Let $\mg$ be a $5$-dimensional unimodular solvable Lie algebra. Then $\mg$ admits a non-degenerate LCP structure with $3$-dimensional flat subspace if and only if it is isomorphic to either $\mg_{5.13}^{-\frac13,-\frac13,r}$ or $\mg_{5.7}^{\frac13,\frac13,\frac13}$ in Table $\ref{table:dim5-1}$.
\end{pro}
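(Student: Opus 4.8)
The plan is to reduce immediately to the almost abelian case and then classify the relevant adjoint operators up to conjugation and scaling. Since $\dim\mmu=3$ and $\dim\mg=5$, the flat subspace $\mmu$ has codimension $2$, so Proposition \ref{pro:ubot2} applies and shows that $\mg$ is almost abelian. Corollary \ref{cor:almost-abelian} then gives an orthogonal splitting $\mg=\R b\ltimes(\R^p\oplus\R^q)$ with $q=\dim\mmu=3$ and hence $p=1$, where $b$ is a unit vector, $\mmu=\R^q$, and the adjoint action has the block form
\[
\ad_b|_{\R^p\oplus\R^q}=\left[\begin{array}{c|c} A & \\ \hline & B-\tfrac13\tr(A)\Id_{\R^3}\end{array}\right],
\]
with $A=\alpha\in\gl(1)=\R$, $\alpha\neq0$, and $B\in\so(3)$.

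First I would recall that for an almost abelian Lie algebra $\R b\ltimes_C\R^k$ the isomorphism type is determined by the conjugacy class of $C$ up to a nonzero real scalar, so it suffices to analyse the eigenvalue data of $C:=\ad_b|_{\R^4}$ modulo conjugation and rescaling of $b$. Putting $B$ into its $\so(3)$ normal form, $B$ is either zero or orthogonally conjugate to the standard generator of a rotation with a single parameter $c>0$; accordingly $B-\tfrac\alpha3\Id_{\R^3}$ has eigenvalues $-\tfrac\alpha3$ (with multiplicity three if $c=0$) or $-\tfrac\alpha3,\,-\tfrac\alpha3\pm ic$. Together with the eigenvalue $\alpha$ coming from $A$ (which is distinct from $-\tfrac\alpha3$ since $\alpha\neq0$), this yields exactly two possibilities for $C$ up to conjugation and scaling: four real eigenvalues $\alpha,-\tfrac\alpha3,-\tfrac\alpha3,-\tfrac\alpha3$ when $B=0$, and two real plus a complex pair $\alpha,-\tfrac\alpha3,-\tfrac\alpha3\pm ic$ when $B\neq0$.

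Next I would normalise $b$ so that the simple real eigenvalue coming from $A$ is scaled to $1$ and match the resulting structure constants against the entries of Table \ref{table:dim5-1}. The case $B=0$ gives a diagonalisable $C$ with real spectrum, identifying $\mg$ with $\mg_{5.7}^{\frac13,\frac13,\frac13}$, while the case $B\neq0$ produces a genuine one-parameter family (the ratio $c/\alpha$ being invariant under rescaling of $b$), identifying $\mg$ with $\mg_{5.13}^{-\frac13,-\frac13,r}$ for the corresponding value $r\neq0$. For the converse, I would observe that both of these Lie algebras are realised by the construction of Example \ref{ex:almab}: $\mg_{5.7}^{\frac13,\frac13,\frac13}$ as the underlying algebra of $\mathcal L_{(\alpha,0)}$ and $\mg_{5.13}^{-\frac13,-\frac13,r}$ as that of $\mathcal L_{(\alpha,B)}$ with $B\neq0$, each of which carries a non-degenerate LCP structure with three-dimensional flat subspace by that example.

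The routine part is the eigenvalue bookkeeping; the main obstacle is the precise identification with the named algebras in Bock's notation, in particular reconciling the sign and scaling conventions of the table (so that the all-real case appears as $\mg_{5.7}^{\frac13,\frac13,\frac13}$ rather than its rescaled opposite) and checking that distinct values of $r$ really give non-isomorphic members of the $\mg_{5.13}$ family. Once the normal forms are matched to the table, unimodularity is automatic since $\tr C=\alpha-3\cdot\tfrac\alpha3=0$, consistent with $\mg$ being the Lie algebra of a solvmanifold.
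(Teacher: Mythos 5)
Your proposal is correct and follows essentially the same route as the paper: reduce to the almost abelian case via Proposition \ref{pro:ubot2}, apply Corollary \ref{cor:almost-abelian} to get the block form of $\ad_b$ with $A\in\gl(1)$ and $B\in\so(3)$, put $B$ in normal form, rescale $b$, and match the two resulting cases ($B=0$ versus $B\neq 0$) with $\mg_{5.7}^{\frac13,\frac13,\frac13}$ and $\mg_{5.13}^{-\frac13,-\frac13,r}$, with the converse coming from Example \ref{ex:almab}. The only cosmetic difference is that you argue via eigenvalue data (which suffices here since $C$ is semisimple), whereas the paper exhibits the basis changes explicitly.
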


\begin{proof}
According to Proposition \ref{pro:ubot2}, $\mg$ is almost abelian and thus, by Corollary \ref{cor:almost-abelian} isomorphic to the underlying Lie algebra of  $\mathcal L_{(A,B)}$. 

Since $\dim \mg=5$ and $\dim \mmu=3$, this implies that the Lie algebra can be written as $\mg=\R b\ltimes (\R x\oplus \R^3)$ with $\mmu=\R^3$, such that $\R x\oplus \R^3$ is an abelian ideal. Moreover, the action of $b$ on this ideal is given by\[
\ad_b|_{\R x\oplus \R^3}= \left[\begin{array}{c|c}  
 -3\mu &  \\ \hline 
  & \mu \Id_\mmu+B
\end{array}\right], 
\] 
for some $0\neq \mu\in\R$ and $B\in\so(3)$. Setting $b':=\frac{1}{\mu}b$ and using the normal form of any matrix in $\so(3)$, we may change the basis of $\mmu$ so that 
\[ \ad_{b'}|_\mk= \left[\begin{array}{c|ccc}  
 -3 & & & \\ \hline 
  & 1 & 0 & 0 \\
  & 0 & 1 & -p \\
  & 0 & p & 1
\end{array}\right], 
\] 
for some $p\in\R$. 

It is straightforward to check that  $\mg$ is isomorphic to $\mg_{5.13}^{-\frac13,-\frac13,r}$ for $r=-\frac{p}3\neq 0$, and isomorphic to $\mg_{5.7}^{\frac13,\frac13,\frac13}$ otherwise (see Table \ref{table:dim5-1} in the Appendix). 

For the converse, it is enough to notice that, by construction, the  Lie algebras $\mg_{5.13}^{-\frac13,-\frac13,r}$ and $\mg_{5.7}^{\frac13,\frac13,\frac13}$ are of the type $\mathcal L_{(A,B)}$ for certain $0\neq A\in \R=\gl(1)$ and $B\in\so(3)$. Hence they admit an LCP structure with 3-dimensional flat space as in Example \ref{ex:almab}.
\end{proof}

We consider now the case of a $2$-dimensional flat subspace. 

\begin{pro}\label{pro:dim5-2}
Let $\mg$ be a $5$-dimensional unimodular solvable Lie algebra. Then $\mg$  admits a non-degenerate LCP structure with $2$-dimensional flat subspace if and only if $\mg$ is  isomorphic to one of the following Lie algebras in Table $\ref{table:dim5-1}$:  
 $\mg_{4.5}^{-\frac12,-\frac12}\oplus \R$, $\mg_{4.6}^{-2p,p}\oplus\R$, $\mg_{5.7}^{1-2q,q,q}$, $\mg_{5.7}^{q,q,1-2q}$, $\mg_{5.9}^{-1,-1}$, $\mg_{5.13}^{-1-2q,q,r}$, $\mg_{5.16}^{-1,q}$, $\mg_{5.17}^{p,-p,r}$ or $\mg_{5.35}^{-2,0}$.
\end{pro}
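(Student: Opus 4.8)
The plan is to run the classification through the dichotomy of Proposition \ref{pro:ubot3}, which applies here without change: since $\dim\mmu=2=n-3$ with $n=5$, any unimodular solvable $\mg$ carrying such a structure is \emph{either} almost abelian \emph{or} isomorphic to the structure $\mathcal C_{A,B_1,B_2,v}$ of Example \ref{ex:noalmab}, in the case $p=1$, $q=2$, $A=2\mu\in\gl(1)$ and $v=rx$. (Equivalently, one could enumerate bottom-up via the correspondence of Corollary \ref{cor:11}, starting from the $3$-dimensional non-unimodular solvable $\mh=\mmu^\bot$ and the representations $\beta\colon\mh\to\so(2)$ vanishing on $\mh'$, but the top-down splitting is cleaner.) This reduces the whole statement to reading off the isomorphism type of $\mg$ in each of the two cases and matching it against the Mubarakzyanov--Bock list.

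In the almost abelian case, Corollary \ref{cor:almost-abelian} writes $\mg=\R b\ltimes(\R^p\oplus\R^q)$ with $p=q=2$, $\mmu=\R^q$, and
\[
\ad_b|_{\R^p\oplus\R^q}=\left[\begin{array}{c|c} A & \\ \hline & B-\tfrac12\tr(A)\,\Id_{\R^2}\end{array}\right],\qquad A\in\gl(2),\ \tr A\neq 0,\ B\in\so(2).
\]
Two unimodular almost abelian Lie algebras $\R b\ltimes_C\R^4$ are isomorphic exactly when the operators $C=\ad_b$ are conjugate up to a nonzero real scalar, so I would classify $\ad_b$ by the real Jordan type of $A$ (two real eigenvalues, one non-semisimple real eigenvalue, or a complex-conjugate pair) together with the single parameter $c$ fixed by $B=c\left[\begin{smallmatrix}0&-1\\1&0\end{smallmatrix}\right]$. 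In each subcase I would rescale $b$ and change basis on $\R^4$ to bring $\ad_b$ to the tabulated normal form, identifying $\mg$ with one of $\mg_{5.7}^{1-2q,q,q}$, $\mg_{5.7}^{q,q,1-2q}$, $\mg_{5.9}^{-1,-1}$, $\mg_{5.13}^{-1-2q,q,r}$, $\mg_{5.16}^{-1,q}$, or $\mg_{5.17}^{p,-p,r}$. The two decomposable algebras $\mg_{4.5}^{-\frac12,-\frac12}\oplus\R$ and $\mg_{4.6}^{-2p,p}\oplus\R$ arise precisely when $A$ is singular: a zero eigenvalue of $A$ produces a central vector in the abelian ideal (a zero eigenvalue of the lower block would force $\tr A=0$, which is excluded), splitting off an $\R$ summand, and the two values $c=0$ and $c\neq0$ then give these two products.

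In the non-almost-abelian case the brackets of Proposition \ref{pro:ubot3} (equivalently Example \ref{ex:noalmab} with $A=2\mu$, $v=rx$, $B_1=\beta(b)$, $B_2=\beta(y)\neq0$) already present $\mg$ in closed form, and a direct change of basis identifies it with the remaining entry $\mg_{5.35}^{-2,0}$, the unique non-almost-abelian member of the list. For the converse I would observe that each listed algebra carries the required structure essentially for free: the almost abelian ones are of type $\mathcal L_{(A,B)}$ from Example \ref{ex:almab} with $\dim\mmu=2$; the two decomposable ones arise from the $4$-dimensional LCP algebras of Proposition \ref{pro:dim4} via the product construction of Proposition \ref{pro:direct} with $\mk=\R$; and $\mg_{5.35}^{-2,0}$ is of type $\mathcal C_{A,B_1,B_2,v}$ from Example \ref{ex:noalmab}. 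In every instance the flat subspace has dimension $2$.

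The substantive work, and the main obstacle, lies entirely in the bookkeeping of the almost abelian case: enumerating the real Jordan forms of $A\in\gl(2)$ with $\tr A\neq0$ modulo conjugation and positive scaling, tracking the eigenvalue coincidences that make a priori distinct parameter choices produce isomorphic algebras (in particular when the spectrum $-\tfrac12\tr(A)\pm ic$ of the lower block meets that of $A$), and writing down the explicit isomorphism with the structure constants of $\mg_{5.7}$, $\mg_{5.9}$, $\mg_{5.13}$, $\mg_{5.16}$, $\mg_{5.17}$ in each subcase. These are routine but delicate linear-algebra computations, and the principal risk is overlooking a degenerate coincidence that merges or separates two of the families.
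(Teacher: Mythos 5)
Your proposal is correct and follows essentially the same route as the paper: the dichotomy of Proposition \ref{pro:ubot3} splits the classification into the almost abelian case (handled via Corollary \ref{cor:almost-abelian} and a case analysis on the real Jordan form of $A\in\gl(2)$ together with the parameter of $B\in\so(2)$, with the decomposable algebras arising exactly when $A$ is singular) and the non-almost-abelian case, which yields $\mg_{5.35}^{-2,0}$; the converse is read off from Examples \ref{ex:almab} and \ref{ex:noalmab} and Proposition \ref{pro:direct}. The only work you defer --- the normal-form bookkeeping matching each subcase to Bock's list --- is precisely what the paper's proof carries out explicitly, and your criterion for isomorphism of almost abelian algebras (conjugacy of $\ad_b$ up to a nonzero scalar) is the tool it implicitly uses.
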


\begin{proof}
According to Proposition \ref{pro:ubot3} for $n=5$, there are two possibilities for the structure of $\mg$:

\textbf{Case 1.} $\mg$ is almost abelian, and thus by 
Corollary \ref{cor:almost-abelian}, it has an orthonormal basis $\{b,x,y,u,v\}$ where the last elements span an abelian ideal $\mk$  of dimension 4, and the action of $b$ on $\mk$ is given by
\[ \ad_b|_\mk = \left[\begin{array}{c|cc}  
 A &  \\ \hline 
  & \mu & -a \\
  & a & \mu 
\end{array}\right], \]
for some $A\in\gl(2)$ with $\tr(A)\neq 0$ and some $\mu, a \in\R$, $\mu\neq 0$. Replacing $b$ by $-\frac{b}{\mu}$  we may assume $\mu=-1$, and exchanging $u$ and $v$ we may assume $a\ge0$. 
Since $\mg$ is unimodular we have that $\tr(A)=2$. 

We shall determine the structure of $\mg$ depending  on the Jordan form of the matrix $A$.

Assume first that $A$ has real eigenvalues and it is diagonalizable. With respect to a basis of eigenvectors, we can write
\[
A=\begin{bmatrix} \alpha & 0\\0 & \beta
    \end{bmatrix},
    \]
     with $\alpha\le\beta\in \R$ and $\alpha+\beta=2$ (thus $\alpha\leq 1$). Suppose that $a\neq 0$. If $\alpha=0$, then setting $e_1=-\frac{1}{a}b, \, e_2=y,\, e_3=u,\, e_4=v$ and $e_5=x$ we get that $\mg$ is isomorphic to $\mg_{4.6}^{-2p,p}\oplus \R$ for $p=\frac{1}{a}$. To the contrary, if $\alpha\neq 0$ then $\beta\geq 1$, thus setting $e_1=y$, $e_2=v$, $e_3=u$, $e_4=x$ and $e_5=\frac{1}{\beta}b$, we get the Lie algebra $\mg_{5.13}^{-1-2q,q,r}$ for $q=-\frac{1}{\beta}\in [-1,0]$ and $r=\frac{a}{\beta}>0$. 

    Suppose now $a=0$. If $\alpha=0$, then setting $e_1=\frac{1}{2}b,\, e_2=y,\, e_3=u,\, e_4=v$ and $e_5=x$ we get that $\mg$ is isomorphic to $\mg_{4.5}^{-\frac12,-\frac12}\oplus \R$ (see Table \ref{table:dim4}). Finally, if $\alpha\neq 0$, set $e_5=-\frac1{\alpha}b$ $e_1=y$, $e_2=u$, $e_3=v$, $e_4=x$,  $q=\frac1{\alpha}$ and $p=1-2q$.  We obtain an isomorphism with $\mg_{5.7}^{p,q,q}$ when $\alpha>0$, and with $\mg_{5.7}^{q,q,p}$ for  $\alpha<0$. 
    
Next, if $A$ has real eigenvalues but is not diagonalizable over $\R$, we may assume that \[A=\begin{bmatrix} 1 & 1\\ 0 & 1 \end{bmatrix}.\] It is clear that $\mg$ is isomorphic to $\mg_{5.9}^{-1,-1}$ when $a=0$ and to $\mg_{5.16}^{-1,q}$ for $q=a$, when $a\neq 0$.

Finally, if $A$ has non-real eigenvalues we may assume \[A=\begin{bmatrix} 1 & -\beta \\ \beta & 1
    \end{bmatrix}\] for some $\beta> 0$. If $a=0$ then setting $e_5=-b$, $e_1=u$, $e_2=x$, $e_3=y$ and $e_4=v$, we obtain that $\mg$ is isomorphic to $\mg_{5.13}^{1,-1,r}$ for $r=\beta$. Finally, if $a\neq 0$ let us set $e_5=\frac{1}{\beta}b$, $e_1=x,\, e_2=y,\, e_3=u$ and $e_4=v$, so that we get an isomorphism with the Lie algebra $\mg_{5.17}^{p,-p,r}$ for $p=\frac{1}{\beta}>0$ and $r=\frac{a}{\beta}$.

\textbf{Case 2.} there is an orthonormal basis $\{b,x,y,u,v\}$ of $\mg$ such that $\mk=\spa\{x,y,u,v\}$ is a non-abelian ideal and the Lie bracket of $\mg$ is encoded in the adjoint action of $b$ and $y$:
\[ \ad_b = \left[\begin{array}{ccc|cc}  
 0 &&&& \\
 &-2\mu & r & & \\ 
 &0 & 0 & &  \\ \hline 
 &&  & \mu & -a \\
 && & a & \mu 
\end{array}\right], \quad 
\ad_y = \left[\begin{array}{ccc|cc}  
  0 & 0 & 0 && \\
 -r & 0 & 0 & & \\ 
 0 & 0 & 0 & &  \\ \hline 
 & &  & 0 & -c \\
 & & & c & 0
\end{array}\right], \]
in the ordered basis $\{b,x,y,u,v\}$, for some $\mu,r,a,c\in\R$ with $\mu\neq 0$, $c\neq 0$.

We note first that replacing $b$ by $b-\frac{a}{c}y$ we may assume $a=0$. The other parameters in the Lie brackets remain unchanged. Setting now $b'=\frac{b}{\mu}$ and $y'=\frac{y}{c}$ we obtain:
\[ [b',x]=-2x,\; [b',y']=\frac{r}{\mu c}x, \; [b',u]=u,\; [b',v]=v,\; [y',u]=v,\; [y',v]=-u.  \]
Therefore, if we denote $e_1=\frac{r}{2\mu c}x+y',\, e_2=u,\,e_3=v,\,e_4=x, e_5=b'$ we obtain the Lie algebra $\mg_{5.35}^{-2,0}$.

To finish the proof it remains to notice that the Lie algebras $\mg_{5.7}^{1-2q,q,q}$, $\mg_{5.7}^{q,q,1-2q}$, $\mg_{5.9}^{-1,-1}$, $\mg_{5.13}^{-1-2q,q,r}$, $\mg_{5.16}^{-1,q}$, $\mg_{5.17}^{p,-p,r}$ or $\mg_{5.35}^{-2,0}$ in Table  \ref{table:dim5-1}, are either isomorphic to the Lie algebras underlying $\mathcal{L}_{(A,B)}$ (in Case 1) or $\mathcal C_{(A,B_1,B_2,v)}$ (in Case 2) described in Examples \ref{ex:almab} and \ref{ex:noalmab},  and thus they admit LCP structures.  In addition, since any LCP structure on a solvable unimodular 4-dimensional Lie algebra $\tmg$ is adapted, $\mg=\tmg\oplus\R$ carries an LCP structure by Proposition \ref{pro:direct}.
\end{proof}

\begin{pro}\label{pro:dim5-1}
Let $\mg$ be a $5$-dimensional unimodular solvable Lie algebra. Then $\mg$ admits a non-degenerate LCP structure with $1$-dimensional flat subspace $\mmu$ if and only if it is isomorphic  to one of the following Lie algebras in Table $\ref{table:dim5-1}$:
$\mathfrak{e}(1,1)\oplus \R^2$, $\mg_{4.2}^{-2}\oplus \R$, $\mg_{4.5}^{p,-p-1}\oplus \R$, $\mg_{4.6}^{-2p,p}\oplus\R$, $\mg_{5.7}^{p,q,r}$, $\mg_{5.8}^{-1}$, $\mg_{5.9}^{p,-2-p}$, $\mg_{5.11}^{-3}$, $\mg_{5.13}^{-1-2q,q,r}$, $\mg_{5.19}^{p,-2p-2}$, $\mg_{5.23}^{-4}$, $\mg_{5.25}^{p,4p}$, $\mg_{5.33}^{-1,-1}$ or $\mg_{5.35}^{-2,0}$.
\end{pro}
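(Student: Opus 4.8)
The plan is to exploit the fact that here the flat space has the maximal possible codimension, $\dim\mmu^\bot=4$, a situation not covered by the structural Propositions \ref{pro:ubot2} and \ref{pro:ubot3} (which handle codimensions $2$ and $3$). Instead I would reduce the problem via the classification correspondence of Corollary \ref{cor:11}. Since $\dim\mmu=1$ we have $q=1$ and $\so(\mmu)=\so(1)=0$, so the only admissible orthogonal representation is $\beta=0$. Hence, by Corollary \ref{cor:11} together with Proposition \ref{pro:cnstr}, a non-degenerate LCP structure with one-dimensional flat space on a $5$-dimensional unimodular solvable Lie algebra is exactly the datum of a $4$-dimensional non-unimodular solvable Lie algebra $\mh=\mmu^\bot$, the resulting algebra being $\mg=\mh\ltimes_\alpha\mmu$ with $\mmu=\R u$ and $\ad_x u=\alpha(x)u=-H^\mh(x)u$ for $x\in\mh$. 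By Corollary \ref{cor:metric} the choice of metric is irrelevant for the isomorphism type, so it suffices to classify the Lie algebras $\mg$ obtained in this way.

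Thus the core of the argument is an enumeration. I would run through the classification of $4$-dimensional solvable Lie algebras (in the notation of Bock/Mubarakzyanov used earlier), retain the non-unimodular ones, and for each compute the trace form $H^\mh$, build the one-dimensional extension $\mg=\mh\ltimes_{-H^\mh}\R$, and then bring it into normal form by an explicit change of basis to read off its isomorphism class among the named algebras $\mg_{5.\bullet}$ of Table \ref{table:dim5-1}, exactly as in the proofs of Propositions \ref{pro:dim5-3} and \ref{pro:dim5-2}. I would organise the case analysis according to whether $\mh$ is decomposable or indecomposable: when $\mh$ splits off a trivial summand the extension $\mg$ is a direct product of a lower-dimensional LCP algebra with $\R$ (or $\R^2$), which accounts for the entries $\mathfrak{e}(1,1)\oplus\R^2$, $\mg_{4.2}^{-2}\oplus\R$, $\mg_{4.5}^{p,-p-1}\oplus\R$ and $\mg_{4.6}^{-2p,p}\oplus\R$, while the genuinely indecomposable $\mh$ produce the remaining $\mg_{5.\bullet}$ families.

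For the converse direction I would, for every algebra in the stated list, exhibit an explicit LCP structure with $\dim\mmu=1$. The almost abelian ones arise as $\mathcal{L}_{(A,B)}$ of Example \ref{ex:almab} (with $q=1$, hence $B=0$); the non-almost-abelian ones arise as $\mathcal{C}_{(A,B_1,B_2,v)}$ of Example \ref{ex:noalmab}; and the decomposable ones follow from Proposition \ref{pro:direct} applied to the lower-dimensional adapted LCP algebras already classified, using that every LCP structure on a solvable unimodular $4$-dimensional algebra $\tmg$ is adapted so that $\tmg\oplus\R$ is again LCP.

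The main obstacle is the sheer volume of bookkeeping rather than any conceptual difficulty. One must traverse the complete list of $4$-dimensional non-unimodular solvable Lie algebras, compute $H^\mh$ correctly in each family (the usual source of error, since the defining brackets differ from case to case), carry out the normalising change of basis on the $5$-dimensional extension, and finally identify and discard the coincidences where inequivalent $\mh$ yield isomorphic $\mg$, so that the final list is both irredundant and exhaustive. Additional care is needed because several of these algebras, for instance $\mg_{5.13}^{-1-2q,q,r}$ and $\mg_{5.35}^{-2,0}$, already appeared in Proposition \ref{pro:dim5-2} carrying an LCP structure with a two-dimensional flat space; the same Lie algebra may legitimately admit LCP structures with flat spaces of different dimensions, and this overlap must be recorded rather than treated as a contradiction.
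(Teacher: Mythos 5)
Your proposal follows essentially the same route as the paper's proof: Theorem \ref{teo:UUbot} together with Proposition \ref{pro:cnstr} (equivalently Corollary \ref{cor:11}, where $q=1$ forces $\beta=0$) reduces the problem to running through the list of $4$-dimensional non-unimodular solvable Lie algebras $\mh=\mmu^\perp$, computing $H^\mh$, forming $\mg=\mh\ltimes_{-H^\mh}\R u$, and identifying the result by an explicit change of basis, which is precisely the case-by-case enumeration the paper performs using the classification in \cite{ABDO}. One small correction to your converse paragraph: for $q=1$ the construction $\mathcal C_{(A,B_1,B_2,v)}$ of Example \ref{ex:noalmab} is unavailable, since it requires $0\neq B_2\in\so(q)=\so(1)=0$ (and its $\mh$ is in any case almost abelian, unlike $\mmu^\perp$ for $\mg_{5.19}$, $\mg_{5.23}$, $\mg_{5.25}$); this is harmless, however, because existence for every algebra on the list already follows directly from Proposition \ref{pro:cnstr} with $\beta=0$, exactly as your first paragraph sets up and as the paper argues.
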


\begin{proof}
Assume that $(g,\theta,\mmu)$ is an LCP structure on a 5-dimensional solvable unimodular Lie algebra $\mg$ with 1-dimensional flat subspace spanned by $u\in\mg$. According to Theorem \ref{teo:UUbot} and Proposition \ref{pro:algLCP}, $\mg$ is has a semidirect product structure
\[
\mg=\mh\ltimes \R u,
\]where $u\in\mz(\mg')$ and $\mh$ is a non-unimodular solvable Lie subalgebra of $\mg$. 
Moreover, by \eqref{eq:trace}, the trace form of $\mh$ is given by $H^{\mh}=-\theta|_{\mh}$ and, since $\dim\mmu=1$,  by Theorem \ref{teo:UUbot}(2)  we have for any $x\in\mh$ 
\begin{equation}\label{eq:xuu} [x,u]=-H^{\mh}(x) u.\end{equation}

Conversely, any Lie algebra obtained as such a semidirect product admit an LCP structure 1-dimensional flat subspace, as shown in Proposition \ref{pro:cnstr}.

Therefore, we need to determine all isomorphism classes of semidirect products of non-unimodular 4-dimensional Lie algebras $\mh$ with $\R$, where the action on $\R$ is given by the opposite of the trace form of $\mh$ as in \eqref{eq:xuu}.

To that purpose, we make use of the list of non-unimodular 4-dimensional Lie algebras $\mh$ given in \cite[Theorem 1.5]{ABDO}. In each case, we give the non-zero Lie bracket relations of $\mh$ on a basis $\{x_i\}_{i=1}^4$ and  we compute $H^\mh$ in terms of its dual basis $\{x^i\}_{i=1}^4$. Then we consider  the semidirect product $\mg=\mh\ltimes \R u$ where the representation is given by $-H^\mh$, and give explicitly the new non-zero Lie brackets  of the form $[x_i,u]$, which together with the brackets in $\mh$, determine the Lie algebra structure of $\mg$. This allows us to give the explicit isomorphism with the Lie algebras in the statement.

\begin{itemize}
\item $\mh=\mathfrak{rr}_3:$ $[x_1,x_2]=x_2$, $[x_1,x_3]=x_2+x_3$. One readily computes $H^\mh (x)=2x^1$. Hence, the new non-zero Lie bracket in $\mg$ is $[x_1,u]=-2u$. An isomorphism between $\mg$ and $\mg_{4.2}^{-2}\oplus\R x_4$ is obtained by taking $e_1=x_1$, $e_2=x_2$, $e_3=x_3$, $e_4=u$.

\item $\mh=\mathfrak{rr}_{3,\lambda}$, $\lambda\in(-1,1]$: $[x_1,x_2]=x_2$, $[x_1,x_3]=\lambda x_3$. Then $H^\mh (x)=(1+\lambda) x^1$, thus we need to add the Lie bracket
$[x_1,u]=-(1+\lambda)u$. 

If $\lambda=0$, then $\mg$ is isomorphic to $\mathfrak{e}(1,1)\oplus \R x_3\oplus \R x_4$, and if $\lambda\neq 0$ then $\mg$ is isomorphic to  $\mg_{4.5}^{p,-p-1}\oplus\R x_4$. Indeed, this is clear when  $-\frac12\leq\lambda<0$, by taking $e_i=x_i$, for $i=1,2,3$, $e_4=u$ and $p=\lambda$. For $\lambda\in(-1,-\frac12)$, one can take $p=-(1+\lambda)$ and set $e_1=x_1$, $e_2=x_2$, $x_3=u$ and $e_4=x_3$. Similarly, for $\lambda\in(0,\frac12)$ we set $p=-\lambda$ whilst for $\lambda\in[\frac12,1]$ we take $p=\lambda-1$.

\item $\mh=\mathfrak{rr'}_{3,\gamma}$, $\gamma> 0$: $[x_1,x_2]=\gamma x_2-x_3$, $[x_1,x_3]=x_2+ \gamma  x_3$. We have $H^\mh=2\gamma x^1$ and thus the new non-zero Lie bracket in $\mg$ is $[x_1,u]=-2\gamma u$. Taking $e_1=x_1$, $e_2=u$, $e_3=x_2$, $e_4=x_3$ and $p=\lambda$ shows that this Lie algebra is isomorphic to $\mg_{4.6}^{-2p,p}\oplus\R x_4$. 

\item  $\mh=\mathfrak{r}_{2}\mathfrak{r}_{2}$: $[x_1,x_2]=x_2$, $[x_3,x_4]=x_4$. This time $H^\mh= x^1+x^3$ and thus in the given basis of $\mg$ we have two new non-zero Lie brackets: $ [x_1,u]=-u=[x_3,u]$. Taking $e_1=x_i$, $i=1,2,4$,  $e_3=u$ and $e_5=-x_3$ shows that this Lie algebra is isomorphic to $\mg_{5.33}^{-1,-1}$.

\item  $\mh=\mathfrak{r}_{2}'$: $[x_1,x_3]=x_3$, $[x_1,x_4]=x_4$, $[x_2,x_3]=x_4$, $[x_2,x_4]=-x_3$. The trace form is $H^\mh= 2 x^1$ and thus the non-zero Lie bracket to be added to $\mg$ is $[x_1,u]=-2u$. Taking $e_1=x_2$, $e_2=x_3$, $e_3=x_4$, $e_4=u$ and $e_5=x_1$ shows that this Lie algebra is isomorphic to $\mg_{5.35}^{-2,0}$.

\item  $\mh=\mathfrak{r}_{4}$: $[x_4,x_1]=x_1$, $[x_4,x_2]=x_1+x_2$, $[x_4,x_3]=x_2+x_3$. We have $H^\mh= 3 x^4$ and so the only non-zero Lie bracket in $\mg$ added to those in $\mh$ is $[x_4,u]=-3u$. Taking $e_i=x_i$ for $i\le 3$, $e_4=u$ and $e_5=x_4$ gives an isomorphism between $\mg$ and $\mg_{5.11}^{-3}$. 

\item  $\mh=\mathfrak{r}_{4,\mu}$, $\mu\neq -\frac12$: $[x_4,x_1]=x_1$, $[x_4,x_2]=\mu x_2$, $[x_4,x_3]=x_2+\mu x_3$. We have $H^\mh= (1+2\mu) x^4$ and so the new non-zero Lie bracket is $[x_4,u]=-(1+2\mu)u$. If $\mu=0$, then $\mg$ is isomorphic to $\mg_{5.8}^{-1}$. Otherwise, $\mg$ is isomorphic to $\mg_{5.9}^{p,-2-p}$ for some $p$. Indeed, when $\mu\in (-\infty,-1]\cup(0,+\infty)$, the isomorphism is clear by taking $p=\frac1{\mu}$, $e_1=x_1$, $e_2=\frac{1}{\mu}x_2$, $e_3=x_3$, $e_4=u$, and  $e_5=\frac1{\mu}x_4$. Moreover, for $\mu\in(-1,0)$, $p=-2-\frac1{\mu}\geq -1$ and the isomorphism is given by choosing $e_1=u$, $e_2=\frac{1}{\mu}x_2$, $e_3=x_3$, $e_4=x_1$, and $e_5=\frac1{\mu}x_4$.

\item  $\mh=\mathfrak{r}_{4,\alpha, \beta}$, $\alpha+\beta\neq -1$; with either $-1<\alpha\leq \beta\leq 1$, $\alpha\beta\neq 0$, or  $\alpha=-1\leq \beta<0$: $[x_4,x_1]=x_1$, $[x_4,x_2]=\alpha x_2$, $[x_4,x_3]=\beta  x_3$. Then $H^\mh= (1+\alpha+\beta) x^4$, so we add the bracket $[x_4,u]=-(1+\alpha+\beta)u$. In this case $\mg$ is isomorphic to $\mg_{5.7}^{p,q,r}$. Indeed, when $\beta\leq -(1+\alpha+\beta)$  this is clear by setting $e_5=-x_4$, $e_1=-x_2$, $e_2=-x_3$, $e_3=-u$, $e_4=x_1$, $p=\alpha$, $q=\beta$, $r=-(1+\alpha+\beta)$. If this is not the case, permuting the elements $x_2,x_3,u$ in the basis of $\mg$, gives the required isomorphism.

\item $\mh=\mathfrak{r}_{4,\gamma, \delta}'$, $\delta>0$, $\gamma\neq -\frac12$: $[x_4,x_1]=x_1$, $[x_4,x_2]=\gamma x_2-\delta x_3$, $[x_4,x_3]=\delta x_2+\gamma  x_3$.  The trace form is $H^\mh= (1+2\gamma) x^4$ and thus the added Lie bracket is $[x_4,u]=-(1+2\gamma)u$. It is straightforward that $\mg$ is isomorphic to $\mg_{5.13}^{-1-2q,q,r}$. Indeed, for $\gamma\in[-1,0]$ the isomorphism is obvious for  $q=\gamma$, $r=\delta$; otherwise one should choose $q=\frac{\gamma}{-1-2\gamma}$, $r=\delta$.

\item  $\mh=\mathfrak{d}_{4,\lambda}$, $\lambda\geq \frac12$: $[x_1,x_2]=x_3$, $[x_4,x_3]=x_3$,  $[x_4,x_1]=\lambda x_1$, $[x_4,x_2]=(1-\lambda) x_2$.  We have $H^\mh= 2 x^4$ and only non-zero Lie bracket in $\mg$ to be added is 
$[x_4,u]=-2u$. It follows that $\mg$ is isomorphic to $\mg_{5.19}^{p,-2p-2}$ by taking $e_5=\frac1{\lambda}x_4$, $e_1=x_i$, $i=1,2,3$, $e_4=u$, and $p=\frac{1-\lambda}{\lambda}$.

\item $\mh=\mathfrak{d}'_{4,\delta}$, $\delta>0$: $[x_1,x_2]=x_3$, $[x_4,x_1]=\frac{\delta}2x_1-x_2$,  $[x_4,x_3]=\delta x_3$, $[x_4,x_2]=x_1+\frac{\delta}2 x_2$. The trace form is $H^\mh= 2\delta x^4$ and $
[x_4,u]=-2\delta u$ is the non-zero Lie bracket to be added to $\mg$. It follows that $\mg$ is isomorphic to $\mg_{5.25}^{p,4p}$ by taking  $p=\delta/2$, $e_5=x_4$, $e_1=x_1$, $e_2=-x_2$, $e_3=x_3$, and $e_4=u$.

\item $\mh=\mh_4$: $[x_1,x_2]=x_3$, $[x_4,x_3]=x_3$,  $[x_4,x_1]=\frac12 x_1$, $[x_4,x_2]=x_1+\frac12 x_2$.  We have $H^\mh= 2x^4$ and the only non-zero Lie bracket in $\mg$ added to those in $\mh$ is 
$[x_4,u]=-2u$. It follows that $\mg$ is isomorphic to $\mg_{5.23}^{-4}$ by taking  $e_5=2x_4$,  $e_i=2x_i$, $i=1,3$, $e_2=x_2$, and $e_4=u$.
\end{itemize}
\end{proof}

The summary of the $5$-dimensional case is given in Table \ref{table:dim5-1} in the Appendix.

\begin{remark}
The restrictions $r>0$ for $\mg_{5.13}^{-1-2q,q,r}$, $q>0$ for $\mg_{5.16}^{-1,q}$,  $p\geq 0$, $r>0$ for $\mg_{5.17}^{p,-p,r}$, and $p>0$ for $\mg_{5.25}^{p,4p}$ that appear in Table \ref{table:dim5-1} are not included in \cite[Appendix A]{Bo16}. However, it is easy to check that, in all cases, the Lie algebra with a negative parameter is isomorphic to the one with opposite (positive) parameter just by switching two elements of the basis.\end{remark}

\medskip

\section{Lattices}\label{ss:lat}
In this section we study the existence of lattices on the simply connected solvable unimodular Lie groups corresponding to the LCP Lie algebras appearing in the previous section. The final objective is to  study the associated LCP solvmanifolds.

\subsection{Lattices on almost abelian Lie groups}
We start this section by recalling the following result by Bock:
\begin{teo}\cite{Bo16}\label{teo:Bock} A unimodular almost abelian Lie group $G=\R\ltimes_\rho \R^{n-1}$ with Lie algebra $\mg=\R b\ltimes\mk$ admits a lattice if and only if there is a basis $\mathcal{B}$ of $\mk=\R^{n-1}$ and $t_0\neq 0$ such that the matrix of $\rho(t_0)=e^{t_0 \ad_{b}}\in \Aut(\mk)$ in that basis is in $\SL(n-1,\Z)$. 
\end{teo}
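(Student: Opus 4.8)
The plan is to prove the two implications separately, writing the group law of $G=\R\ltimes_\rho\R^{n-1}$ as $(t,v)(s,w)=(t+s,\,v+\rho(t)w)$, letting $N:=\{0\}\times\R^{n-1}$ be the normal abelian copy of $\R^{n-1}$, and $\pi\colon G\to\R$, $\pi(t,v)=t$, the projection, so that $N=\ker\pi$ and $G/N\cong\R$. For sufficiency, suppose a basis $\mathcal B$ of $\mk$ and some $t_0\neq0$ are given with the matrix of $\rho(t_0)=e^{t_0\ad_b}$ in $\SL(n-1,\Z)$, and let $\Lambda\subset\R^{n-1}$ be the lattice $\Z$-spanned by $\mathcal B$. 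Since the matrices of $\rho(t_0)$ and of its inverse $\rho(-t_0)=\rho(t_0)^{-1}$ are both integral (the determinant being $1$), $\rho(t_0)$ restricts to an automorphism of $\Lambda$. I would then set $\Gamma:=\{(kt_0,v):k\in\Z,\ v\in\Lambda\}$ and check directly from the group law, using $\rho(t_0)^k\Lambda=\Lambda$, that $\Gamma$ is a subgroup; it is discrete, being contained in $t_0\Z\times\Lambda$, and cocompact, a compact fundamental domain being $[0,t_0]$ times a fundamental domain of $\Lambda$. Thus $\Gamma$ is a lattice.

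For necessity, let $\Gamma\subset G$ be a lattice. The key input is the structure theory of lattices in simply connected solvable Lie groups (Mostow; see Raghunathan). When $\mg$ is non-nilpotent, $N$ is exactly the nilradical of $G$: one always has $\mk\subseteq\mathfrak n$, and since $\mk$ has codimension one this forces $\mathfrak n=\mk$ unless $\mathfrak n=\mg$, i.e. unless $\mg$ is nilpotent. Mostow's theorem then yields that $\Lambda:=\Gamma\cap N$ is a lattice in $N\cong\R^{n-1}$ and that $\pi(\Gamma)$ is a lattice in $\R$, hence $\pi(\Gamma)=t_0\Z$ for some $t_0\neq0$. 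Choosing $\gamma=(t_0,v)\in\Gamma$ with $\pi(\gamma)=t_0$, conjugation by $\gamma$ preserves both $\Gamma$ and $N$, hence preserves $\Lambda$; and a direct computation gives $\gamma(0,w)\gamma^{-1}=(0,\rho(t_0)w)$, so $\rho(t_0)$ is an automorphism of the lattice $\Lambda$. Taking $\mathcal B$ to be a $\Z$-basis of $\Lambda$, the matrix of $\rho(t_0)$ lies in $\GL(n-1,\Z)$; and since $G$ is unimodular we have $\det\rho(t)=e^{t\,\tr(\ad_b)}=1$, so in fact $\rho(t_0)\in\SL(n-1,\Z)$.

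The routine parts are the sufficiency and the conjugation identity. The main obstacle is the structural input used in the necessity direction: that $\Gamma\cap N$ is a lattice in $N$ and that $\pi(\Gamma)$ is discrete rather than dense in $\R$. This genuinely depends on the position of $N$ relative to $\Gamma$ — for an arbitrary closed normal subgroup the intersection with $\Gamma$ may be trivial, as the irrational line in $\R^2$ shows — which is why one reduces to the nilradical. The nilpotent case $\mathfrak n=\mg$ falls outside this reduction and must be treated separately, e.g. through Malcev's rationality criterion, and checking that it still produces the asserted $\SL(n-1,\Z)$ normal form is the delicate point.
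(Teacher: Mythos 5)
The paper does not actually prove this statement: it is quoted from Bock \cite{Bo16}, and the only proof content the paper supplies is the explicit lattice $\Gamma=t_0\Z\ltimes_\rho\Gamma_0$ in the sufficiency direction, which you reproduce. Your argument is the standard one and is essentially Bock's. The sufficiency direction is correct (integrality of $\rho(t_0)^{\pm1}$ in the basis $\mathcal B$ gives $\rho(t_0)\Lambda=\Lambda$, and the semidirect product $t_0\Z\ltimes\Lambda$ is a discrete cocompact subgroup), and so is the necessity direction: the identification of the nilradical with $\mk$ when $\mg$ is not nilpotent, the appeal to Mostow's theorem to get that $\Lambda=\Gamma\cap N$ is a lattice in $N$ and $\pi(\Gamma)=t_0\Z$, the conjugation identity $\gamma(0,w)\gamma^{-1}=(0,\rho(t_0)w)$, and the determinant computation $\det\rho(t_0)=e^{t_0\tr(\ad_b)}=1$ forced by unimodularity are all correct.

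The one loose end is the nilpotent case, which you flag but do not close; it closes in one line, so it is not a genuine obstacle. If $\mg$ is nilpotent then $A:=\ad_b|_\mk$ is a nilpotent matrix, and in a real Jordan basis the entries of the unipotent matrix $e^{t_0A}$ are of the form $t_0^j/j!$ with $j\le n-1$; taking $t_0=(n-1)!$ puts $e^{t_0A}$ in $\SL(n-1,\Z)$, so the criterion is always satisfiable, and your sufficiency direction then already produces a lattice. Hence in the nilpotent case both sides of the equivalence hold unconditionally and nothing remains to be checked. With that observation your proof is complete.
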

In this case a lattice can be given by $\Gamma=t_0\Z\ltimes_\rho \Gamma_0$, where $\Gamma_0\simeq \Z^{n-1}$ is the lattice of $\R^{n-1}$ spanned by $\mathcal{B}$.

Recall from Example \ref{ex:amal1} that the amalgamated product of two almost abelian Lie algebras is again almost abelian. The next example studies the existence of lattices on such products.

\begin{ex}\label{ex:amal2}

Let $\mg$  be  the amalgamated product of $(\mg_1,g_1,\theta_1,\mmu_1)$ and $(\mg_2,g_2,\theta_2,\mmu_2)$, where both $\mg_1$ and $\mg_2$ are unimodular almost abelian Lie algebras, with $\mg_i=\R b_i\ltimes_{C_i} \R^{n_i}$ for $i=1,2$. We know that $\mg$ is again a unimodular almost abelian Lie algebra, as it can be written as $\mg=\R b\ltimes_C (\R^{n_1}\oplus\R^{n_2})$, with $b$ and $C$ as in \eqref{eq:b} and \eqref{eq:C}, respectively. Let us assume now that $G_1$ and $G_2$, the simply connected Lie groups associated to $\mg_1$ and $\mg_2$ respectively, have lattices. According to Theorem \ref{teo:Bock}, there exist $t_1,t_2\in\R$ such that $e^{t_1C_1}\in \SL(n_1,\Z)$ and $e^{t_2C_2}\in \SL(n_2,\Z)$, for certain bases $\mathcal B_1$ and $\mathcal B_2$ of $\R^{n_1}$ and $\R^{n_2}$, respectively. We may assume that $t_1, t_2>0$. We will give some conditions to ensure that $G$, the simply connected Lie group with Lie algebra $\mg$, admits lattices. 

Let us assume that the quotient $\frac{t_2|\theta_2|}{t_1|\theta_1|}$ is a rational number. Then there exists $t>0$ such that
\begin{equation}\label{eq:latt}  t\frac{|\theta_2|}{(|\theta_1|^2+|\theta_2|^2)^\frac12} = k_1t_1, \quad   t\frac{|\theta_1|}{(|\theta_1|^2+|\theta_2|^2)^\frac12} = k_2t_2, \quad \text{for some } k_1,k_2\in \N. \end{equation}
 By \eqref{eq:C}, the matrix of $e^{tC}$, expressed in the basis $\mathcal{B}_1\cup \mathcal{B}_2$ of $\R^{n_1}\oplus \R^{n_2}$, is given by
\[
e^{tC}= \left[ \begin{array}{c|c} e^{k_1t_1C_1} & \\ \hline
&  e^{k_2t_2C_2}
\end{array} \right]=\left[ \begin{array}{c|c} (e^{t_1C_1})^{k_1} & \\ \hline
&  (e^{t_2C_2})^{k_2}\end{array} \right]\in \SL(n_1+n_2,\Z).
\]
Therefore, $G$ admits lattices, due to Theorem \ref{teo:Bock}.

In particular, if the simply connected Lie group corresponding to a unimodular almost abelian LCP Lie algebra admits lattices, then the simply connected Lie group whose Lie algebra is the amalgamated product of this LCP Lie algebra with itself, also admits lattices. In this way, one obtains (compact) LCP solvmanifolds with arbitrarily large flat space.
\end{ex}

From Proposition \ref{pro:ubotgeq2}, we know that on any $n$-dimensional unimodular solvable LCP Lie algebra, the flat factor has dimension $\le n-2$, and by Proposition \ref{pro:ubot2}, if the equality holds, the algebra is almost abelian. This allows us to use Theorem \ref{teo:Bock} in order to study LCP solvmanifolds with flat factor of codimension 2.

\begin{pro}\label{pro:dim=n-2}
Let  $\mg$ be a unimodular solvable Lie algebra of dimension $n$, with corresponding simply connected Lie group $G$. Assume that $G$ admits a lattice $\Gamma$ and there is a non-degenerate LCP structure $(g, \theta,\mmu)$ on $\mg$ satisfying $\dim \mmu=n-2$, so that $\Gamma\bs G$ is a compact LCP solvmanifold. Then either $\dim \mg=3$ or $\dim\mg=4$.
\end{pro}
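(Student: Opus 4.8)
The plan is to combine the structural results of the previous sections with Bock's lattice criterion and then extract an arithmetic contradiction from the eigenvalues of the monodromy matrix. First I would make the situation completely explicit. Since $\dim\mmu=n-2$, Proposition \ref{pro:ubot2} shows that $\mg$ is almost abelian, so by Corollary \ref{cor:almost-abelian} there is an orthonormal basis in which $\mg=\R b\ltimes\mk$ with $\mk=\R x\oplus\mmu$ an abelian ideal of dimension $n-1$, and
\[
\ad_b|_\mk=\begin{bmatrix} -(n-2)\mu & \\ & \mu\Id_\mmu+B\end{bmatrix},\qquad \mu:=\theta(b)>0,\ B\in\so(\mmu).
\]
Thus $\ad_b|_\mk$ has the simple real eigenvalue $-(n-2)\mu$ together with the $n-2$ eigenvalues $\mu+i\beta_j$ (the $i\beta_j$ being the eigenvalues of $B$), all of which have real part $\mu$.

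Next I would invoke Bock's criterion (Theorem \ref{teo:Bock}): since $G$ has a lattice, there exist $t_0>0$ and a basis in which $M:=e^{t_0\ad_b|_\mk}$ lies in $\SL(n-1,\Z)$. Its eigenvalues are then algebraic integers; explicitly they are the single real number $\lambda_0:=e^{-(n-2)\mu t_0}$, of modulus $<1$, and $n-2$ numbers of modulus $R:=e^{\mu t_0}>1$, satisfying $\lambda_0R^{n-2}=\det M=1$, i.e. $\lambda_0=R^{-(n-2)}$.

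The heart of the proof is a Galois argument. As $\lambda_0\in(0,1)$ is an algebraic integer, it is irrational, hence has a conjugate $\rho_1\neq\lambda_0$. Because $-(n-2)\mu$ is a simple eigenvalue of $\ad_b|_\mk$ with real part distinct from every $\mu+i\beta_j$, the number $\lambda_0$ is the unique eigenvalue of $M$ of modulus $R^{-(n-2)}$, so $|\rho_1|=R$. I would then split into two cases and apply a field automorphism $\sigma$ with $\sigma(\lambda_0)=\rho_1$. If $\rho_1$ is real, then $\rho_1=R$ (the modulus-$R$ eigenvalues have positive real part), and applying $\sigma$ to $\lambda_0=R^{-(n-2)}$ gives $R=\sigma(R)^{-(n-2)}$; since $\sigma(R)$ is again a root, $|\sigma(R)|\in\{R^{-(n-2)},R\}$, and comparing moduli forces either $R^{n-1}=1$ (impossible as $R>1$) or $R^{(n-2)^2}=R$, whence $(n-2)^2=1$ and $n=3$. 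If $\rho_1$ is not real, then $\overline{\rho_1}$ is also an eigenvalue and $\rho_1\overline{\rho_1}=R^2$, so I rewrite the identity with integer exponent as $\lambda_0^2=(\rho_1\overline{\rho_1})^{-(n-2)}$; applying $\sigma$ and taking moduli yields $R^2=R^{-(n-2)(a+b)}$, where $a,b\in\{-(n-2),1\}$ satisfy $|\sigma(\rho_1)|=R^a$, $|\sigma(\overline{\rho_1})|=R^b$. Injectivity of $\sigma$ forbids $a=b=-(n-2)$, so $a+b\in\{2,3-n\}$; the value $a+b=2$ gives $n-2=-1$, impossible, leaving $a+b=3-n$ and $(n-2)(n-3)=2$, i.e. $n=4$. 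Combining the two cases gives $n\in\{3,4\}$.

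The main obstacle is the Galois step: one must choose the right algebraic identity so that comparing moduli closes up, and in particular handle uniformly the fact that a modulus-$R$ conjugate of $\lambda_0$ may be real or complex — this is exactly what dictates using $\lambda_0^2=(\rho_1\overline{\rho_1})^{-(n-2)}$ in the complex case to preserve integer exponents. The crucial structural input making the modulus bookkeeping rigid is the observation that $\lambda_0$ is simple and is the only eigenvalue of its modulus, so that every nontrivial conjugate has modulus exactly $R$.
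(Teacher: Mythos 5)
Your proposal is correct and follows the paper's structural reduction exactly --- Proposition \ref{pro:ubot2} and Corollary \ref{cor:almost-abelian} give the almost abelian form of $\ad_b$, and Theorem \ref{teo:Bock} produces $t_0$ with $M:=e^{t_0\ad_b|_\mk}$ conjugate to a matrix in $\SL(n-1,\Z)$ --- but it diverges at the decisive arithmetic step. The paper stops at the observation that the integer matrix preserves the splitting $\R Qx\oplus Q\mmu$ with the second block a scalar multiple of an orthogonal transformation, and then invokes \cite[Proposition 3]{MMP} as a black box to conclude $n\in\{3,4\}$. You instead prove the required arithmetic fact from scratch: $\lambda_0=e^{-(n-2)\mu t_0}$ is an algebraic integer in $(0,1)$, hence irrational; its conjugates are roots of the characteristic polynomial of $M$, hence eigenvalues, and all nontrivial conjugates have modulus $R=e^{\mu t_0}$; applying a Galois automorphism to $\lambda_0 R^{n-2}=1$ (real conjugate) or to $\lambda_0^2(\rho_1\overline{\rho_1})^{n-2}=1$ (complex conjugate) and comparing moduli yields $(n-2)^2=1$ or $(n-2)(n-3)=2$, i.e.\ $n=3$ or $n=4$. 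The exclusion of $a=b=-(n-2)$ via injectivity of $\sigma$ and uniqueness of the small eigenvalue is also correct. The net effect is a self-contained proof of the arithmetic core that the paper outsources, at the cost of a somewhat longer argument.

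Two harmless slips. First, a real eigenvalue of $e^{t_0(\mu\Id_\mmu+B)}$ is $\pm R$, not necessarily $+R$: the modulus-$R$ eigenvalues are $Re^{it_0\beta_j}$ and nothing forces $\cos(t_0\beta_j)>0$. This does not affect your argument, since if $\rho_1=-R$ the identity becomes $-R=\sigma(R)^{-(n-2)}$ and the modulus comparison is unchanged. Second, Theorem \ref{teo:Bock} only provides $t_0\neq 0$; replacing $t_0$ by $-t_0$ (i.e.\ $M$ by $M^{-1}\in\SL(n-1,\Z)$) justifies your assumption $t_0>0$.
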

\begin{proof}
Assume that $(\mg, g, \theta,\mmu)$ is a non-degenerate LCP Lie algebra satisfying $\dim \mmu=n-2$. By Propositions \ref{pro:ubotgeq2} and \ref{pro:ubot2}, we know that 
$\mmu^\bot$ has an orthonormal basis $\{b,x\}$ such that $\theta=\mu g(b,\cdot)$ for some $\mu>0$, $\mk:=\ker \theta=\R x\oplus \mmu$ is an abelian ideal and
\begin{equation*}
\ad_b=-(n-2)\mu x\otimes x+\mu\Id_\mmu+B,
\end{equation*}where $B$ is a fixed skew-symmetric endomorphism of $\mmu$.

In particular, $\mg=\R b\oplus \mk$ is an orthogonal direct sum and $\mg$ is almost abelian. Thus $G\simeq \R\ltimes_\rho \R^{n-1}$ where, if we write $\R^{n-1}=\R x\oplus \mmu$, we have
\begin{equation}
\label{eq:rhot}
\rho(t)=e^{t \ad_b}= \left( \begin{array}{c|ccc}
e^{-t\mu(n-2)}&&0\\
\hline\\
&&e^{t\mu}\cdot  e^{tB}\\
\\
\end{array}\right),\qquad t\in\R.
\end{equation} 

Assume that  $G$ admits a lattice. By Theorem \ref{teo:Bock}, there is some $t_0\neq 0$ such that $\rho(t_0)$ is conjugated to an element in $\SL(n-1,\Z)$, that is, $Q\rho(t_0)Q^{-1}=:Z\in\SL(n-1,\Z)$ for some $Q\in \GL(n-1,\R)$. 

It is clear from  \eqref{eq:rhot} that $\R x$ and  $\mmu$ are preserved by $\rho(t_0)$. Hence, $\R Qx$ and $Q\mmu$ are invariant under $Z$, and $Z|_{Q\mmu}$ is orthogonal with respect to the (restriction of the) inner product $e^{-2t\mu}g(Q^{-1}\cdot,Q^{-1}\cdot)$. Then, by \cite[Proposition 3]{MMP}, either $n=3$ or $n=4$.
\end{proof}

An immediate consequence of this result is the following:
\begin{cor}\label{cor:dim=n-2}Let $G$ be a simply connected solvable Lie group of dimension $n\geq 5$ with Lie algebra $\mg$. If $\mg$ admits an LCP structure $(g,\theta,\mmu)$ with $\dim \mmu=n-2$, then $G$ has no lattices.
\end{cor}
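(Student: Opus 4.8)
The plan is to prove the statement by contradiction, reducing it directly to Proposition \ref{pro:dim=n-2}, of which it is essentially the contrapositive. So I would begin by assuming, contrary to the claim, that the simply connected solvable Lie group $G$ of dimension $n\ge 5$ does admit a lattice $\Gamma$, while $\mg$ carries an LCP structure $(g,\theta,\mmu)$ with $\dim\mmu=n-2$.

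The first step is to upgrade the hypotheses so that Proposition \ref{pro:dim=n-2} becomes applicable. Here the one genuine (but already available) input is that a simply connected Lie group admitting a lattice must have \emph{unimodular} Lie algebra: this is the fact recalled after Proposition \ref{pro:equiv} (following \cite{Mil76}). Thus $\mg$ is unimodular, and it is solvable by assumption, so $\mg$ is a unimodular solvable Lie algebra of dimension $n$ equipped with a non-degenerate LCP structure whose flat subspace has codimension $2$. Next I would invoke Proposition \ref{pro:equiv} to descend the algebraic LCP structure to the compact quotient $M:=\Gamma\bs G$, which is therefore a compact LCP solvmanifold; this places us precisely in the setting of Proposition \ref{pro:dim=n-2}.

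At this point Proposition \ref{pro:dim=n-2} applies verbatim and yields $\dim\mg\in\{3,4\}$, contradicting the standing hypothesis $n\ge 5$. Hence no lattice $\Gamma$ can exist, which is the desired conclusion. I expect no computational obstacle whatsoever: the only points requiring care are the two structural implications just used, namely that the existence of a lattice forces unimodularity and that an LCP structure on $\mg$ induces one on $\Gamma\bs G$, both of which are already established earlier in the paper. The real content lives entirely in Proposition \ref{pro:dim=n-2} (and, through it, in the orthogonality argument of \cite[Proposition 3]{MMP}); the corollary is merely its restatement for groups of dimension at least $5$.
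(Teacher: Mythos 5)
Your proposal is correct and matches the paper's intent exactly: the paper states this corollary as an immediate consequence of Proposition \ref{pro:dim=n-2} with no separate proof, and your argument simply spells out the contrapositive, correctly supplying the two implicit ingredients (unimodularity of $\mg$ from the existence of a lattice via \cite{Mil76}, and the descent of the LCP structure to $\Gamma\bs G$ via Proposition \ref{pro:equiv}). Nothing further is needed.
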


\subsection{Low-dimensional case}

In this section we will determine which of the simply connected solvable Lie groups corresponding to the low-dimensional LCP Lie algebras that appeared in \S \ref{sec:classification} 
have lattices. In order to do so, we will use mainly results in \cite{Bo16}, but in some cases we will prove the (non-)existence directly. For this we will use the following result about polynomials with integer coefficients.

\begin{lm}\label{lm:irred}
Let $P\in\Z[X]$ be a polynomial with integer coefficients.
  \begin{enumerate}
      \item If $P$ is irreducible, then all its roots in $\C$ are simple. 
      \item If $|P(0)|=1$, $P$ is monic and has a double root different from $\pm1$, then it has at least two double roots.
  \end{enumerate}
 \end{lm}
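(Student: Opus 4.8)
The plan is to treat the two parts separately, using part (1) as an ingredient in part (2).

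For part (1), I would first reduce to irreducibility over $\Q$: a non-constant polynomial that is irreducible in $\Z[X]$ is necessarily primitive (otherwise its content, a non-unit constant, would split off a non-trivial factor), hence irreducible in $\Q[X]$ by Gauss's lemma; a constant irreducible $P$ has no complex roots, so the statement is then vacuous. Over the characteristic-zero field $\Q$ I would then run the classical separability argument: if $P$ had a repeated complex root $\alpha$, then $\alpha$ would be a common root of $P$ and its derivative $P'$, so $\gcd(P,P')$ would be non-constant. But $\gcd(P,P')$ divides the irreducible $P$, hence is either constant or a scalar multiple of $P$; the latter would force $P\mid P'$, which is impossible since $P'\neq 0$ (as $\deg P\geq 1$ in characteristic zero) and $0\le \deg P' <\deg P$. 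Thus $\gcd(P,P')=1$, so $P$ and $P'$ share no root and every root of $P$ is simple.

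For part (2), I would begin with the factorization into irreducibles. Since $P$ is monic it is primitive, so by Gauss I may write $P=\prod_i Q_i^{m_i}$ with the $Q_i$ distinct monic irreducible polynomials in $\Z[X]$ and $m_i\geq 1$. By part (1) each $Q_i$ is separable, and distinct monic irreducibles share no common complex root; therefore the multiplicity in $P$ of any root of $Q_i$ equals exactly $m_i$. As $\alpha$ is a double root, it is a root of some factor $Q:=Q_{i_0}$ with exponent $m_{i_0}=2$, so $Q^2\mid P$ and every complex root of $Q$ is a double root of $P$. The crux is then a dichotomy on $\deg Q$. If $\deg Q\geq 2$, then $Q$, being irreducible of degree at least two with simple roots, has a second complex root $\beta\neq\alpha$, and $\beta$ is again a double root of $P$, producing the two required double roots. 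It remains to rule out $\deg Q=1$, which is exactly where the hypotheses $|P(0)|=1$ and $\alpha\neq\pm 1$ enter: in that case $Q=X-\alpha$ with $\alpha\in\Z$, and dividing by the monic integer polynomial $(X-\alpha)^2$ gives $P=(X-\alpha)^2R$ with $R\in\Z[X]$. Evaluating at $0$ yields $P(0)=\alpha^2 R(0)$, so $\alpha^2\mid P(0)$ in $\Z$; since $|P(0)|=1$ this forces $\alpha^2=1$, i.e. $\alpha=\pm 1$ (note $\alpha\neq 0$ because $P(0)\neq 0$), contradicting $\alpha\neq\pm 1$. Hence $\deg Q\geq 2$ always holds, completing the proof.

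I expect the only genuinely delicate point to be the bookkeeping that the multiplicity of a root of $P$ can be read off from the exponents of the irreducible factors; this rests entirely on part (1) (separability of each $Q_i$) together with the coprimality of distinct irreducibles, and it is what makes the reduction to a single factor $Q$ legitimate. The Gauss-lemma reductions and the divisibility computation $\alpha^2\mid P(0)$ are routine by comparison.
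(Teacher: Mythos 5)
Your proof is correct and follows essentially the same route as the paper's: part (1) is the standard ``irreducible over $\Z$ implies irreducible over $\Q$ implies separable'' argument (you phrase it via $\gcd(P,P')$ where the paper uses the minimal polynomial of the repeated root, a purely cosmetic difference), and part (2) rests on the same key observation that the irreducible factor carrying the double root must have degree at least $2$, since a linear factor would force an integer root dividing $P(0)=\pm1$, hence equal to $\pm1$. The only nitpick is that you should write $m_{i_0}\ge 2$ rather than $m_{i_0}=2$ for the exponent of the factor containing the double root, but this changes nothing downstream.
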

\begin{proof} (1) It is well known that a polynomial irreducible in $\Z[X]$ is also irreducible in $\Q[X]$. Assume that $P$ has a double root $a$ and let $\mu_a$ be the minimal polynomial of $a$ in $\Q[X]$. Since $P'(a)=0$, $\deg(\mu_a)\le \deg(P)-1$. As $\Q[X]$ is principal, $P$ is a non-constant multiple of $\mu_a$, so it is reducible, contradicting the hypothesis.

(2) Let $a$ be a double root of $P$. From the above, $P$ is divisible by $\mu_a^2$, so it is enough to show that $\deg(\mu_a)\ge 2$. Indeed, if $\deg(\mu_a)=1$, $a$ would be rational, thus integer as $P$ is monic, and finally $a=\pm1$ because $a$ divides $|P(0)|$.
\end{proof}

\begin{cor}\label{cor:double}
    Let $A\in \sl(q)$ be a trace-free matrix and let $G$ be the simply connected Lie group with algebra $\mg=\R b\ltimes_A\R^q$, where the action is defined by $\ad_b|_{\R^q}=A$. If all eigenvalues of $A$ are real and the characteristic polynomial of $A$ has exactly one multiple root $\lambda\ne 0$, then $G$ has no lattices.
\end{cor}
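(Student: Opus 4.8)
The plan is to argue by contradiction, combining the lattice criterion of Theorem \ref{teo:Bock} with the arithmetic obstruction in Lemma \ref{lm:irred}(2). Suppose $G$ admits a lattice. Since $A\in\sl(q)$, the Lie algebra $\mg=\R b\ltimes_A\R^q$ is unimodular almost abelian and $G=\R\ltimes_\rho\R^q$ with $\rho(t)=e^{tA}$. By Theorem \ref{teo:Bock} there exist $t_0\neq 0$ and a basis of $\R^q$ in which the matrix of $\rho(t_0)=e^{t_0A}$ is some $Z\in\SL(q,\Z)$. As matrices of the same endomorphism in different bases are conjugate, $Z$ is conjugate over $\R$ to $e^{t_0A}$; hence they share the same characteristic polynomial $P\in\Z[X]$, which is monic of degree $q$ with $|P(0)|=|\det Z|=1$.

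Next I would track eigenvalues together with their multiplicities. By hypothesis every eigenvalue of $A$ is real, and its characteristic polynomial has a unique multiple root $\lambda\neq 0$, of some multiplicity $m\ge 2$, all other eigenvalues being simple. The eigenvalues of $e^{t_0A}$ (equivalently, of $Z$) are the numbers $e^{t_0\mu}$, where $\mu$ ranges over the spectrum of $A$. Since $t_0\neq 0$, the map $t\mapsto e^{t_0t}$ is injective on $\R$, so it both preserves distinctness and transports algebraic multiplicities. Consequently $e^{t_0\lambda}$ is a root of $P$ of multiplicity exactly $m\ge 2$, whereas every other root of $P$ is simple. Moreover $e^{t_0\lambda}>0$ and $e^{t_0\lambda}\neq 1$ because $\lambda\neq 0$ and $t_0\neq 0$, so $e^{t_0\lambda}$ is a multiple root different from $\pm 1$.

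Finally I would invoke the obstruction: $P$ is monic with $|P(0)|=1$ and has a double root $e^{t_0\lambda}\neq\pm 1$, so by Lemma \ref{lm:irred}(2) it must possess at least two distinct double roots. This contradicts the previous step, in which $e^{t_0\lambda}$ was shown to be the \emph{only} root of $P$ of multiplicity greater than one. Therefore $G$ admits no lattice.

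The argument is largely bookkeeping once Theorem \ref{teo:Bock} and Lemma \ref{lm:irred} are available; the one point deserving care is the faithful transfer of algebraic multiplicities from $A$ to $e^{t_0A}$, and this is precisely where the assumption that all eigenvalues of $A$ are real is essential. Reality guarantees the injectivity of $t\mapsto e^{t_0t}$ on the spectrum of $A$ and prevents distinct eigenvalues of $A$ (which, if complex, could differ by $2\pi i/t_0$) from being identified after exponentiation, thereby ensuring that $e^{t_0\lambda}$ remains the sole multiple root of $P$.
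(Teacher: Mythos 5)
Your argument is correct and follows essentially the same route as the paper: apply Theorem \ref{teo:Bock} to get $e^{t_0A}$ conjugate to an element of $\SL(q,\Z)$, observe that its characteristic polynomial is monic with integer coefficients, $|P(0)|=1$, and a unique multiple root $e^{t_0\lambda}\neq\pm1$, and then contradict Lemma \ref{lm:irred}(2). Your explicit remark that the reality of the spectrum is what makes $\mu\mapsto e^{t_0\mu}$ injective, so that multiplicities transfer faithfully, is a point the paper leaves implicit, but the proof is otherwise the same.
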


\begin{proof}
    If $G$ has a lattice, Theorem \ref{teo:Bock} shows that there exists a non-zero real number $t_0$ such that $e^{t_0A}$ is conjugate to a matrix in $\mathrm{SL}(q,\Z)$. Then the characteristic polynomial, $P$, of $e^{t_0A}$ is monic, has integer coefficients, satisfies $P(0)=\det(e^{t_0A})=e^{t_0\tr(A)}=1$ and has a double root $e^{t_0\lambda}$ different from $\pm 1$. The hypothesis on the characteristic polynomial of $A$ implies that all other roots of $P$ are simple, which contradicts Lemma \ref{lm:irred}(2). Thus $G$ has no lattices.
\end{proof}

We will now analyse the existence of lattices in all simply connected Lie groups whose corresponding LCP Lie algebras were classified in Section \ref{sec:classification}. The analysis will be made according to the dimension of the Lie algebra. The information obtained is included in the last column of the tables in the Appendix.

\textbf{Case 1.} $\dim \mg=3$.
By Proposition \ref{pro:dim3}, the only unimodular solvable LCP Lie algebra in dimension 3 is $\mathfrak{e}(1,1)$. Let us denote by $E(1,1)$ the simply connected Lie group associated to $\mathfrak{e}(1,1)$. The fact that $E(1,1)$ admits lattices is well-known. For the sake of completeness we exhibit lattices in $E(1,1)$, using Theorem \ref{teo:Bock}.

The Lie algebra $\mathfrak{e}(1,1)$ can be written as $\mathfrak{e}(1,1)=\R\ltimes \R^2$, where the action of $\R$ on $\R^2$ is given by the matrix \[A:=\begin{bmatrix}
1 & 0 \\ 0 & -1\end{bmatrix}.\] For $m\in \N$, $m\geq 3$, set $t_m:=\ln \left(\frac{m+\sqrt{m^2-4}}{2}\right)$. Then, since $e^{t_m}+e^{-t_m}=m$, it is easy to check that $\exp(t_m A)$ is conjugate to 
\[E_m:=\begin{bmatrix} 0 & -1 \\ 1 & m \end{bmatrix}\in \operatorname{SL}(2,\Z),\] so there exists $Q\in \operatorname{GL}(2,\R)$ with $E_m=Q\exp(t_m A)Q^{-1}$. Then
$\Gamma_m:=t_m\Z \ltimes Q^{-1}\Z^{2}$ is a lattice in $E(1,1)$.

The lattices $\Gamma_m$ above are pairwise non-isomorphic, since it is readily verified that 
\[ \Gamma_m/[\Gamma_m,\Gamma_m]\cong \Z\oplus \Z_{m-2}. \]
Thus, the solvmanifolds $\Gamma_m\backslash E(1,1)$ are pairwise non-homeomorphic, since $\pi_1(\Gamma_m\backslash E(1,1))$ is isomorphic to $\Gamma_m$.

\textbf{Case 2.} $\dim \mg=4$. By Proposition \ref{pro:dim4} we have the following 4 possibilities:
\begin{itemize}
    \item $\mathfrak{e}(1,1)\oplus\R$: clearly $E(1,1)\times \R$ admits lattices, since $E(1,1)$ does. 

\item $\mg_{4.2}^{-2}$: the corresponding simply connected Lie group $G_{4.2}^{-2}$ does not admit lattices, due to \cite[Theorem 7.1.1]{Bo16}. Notice that the Lie algebra $\mg_{4.2}^{-2}$ is sometimes referred to as $\mg_{4.2}$ in \cite{Bo16}.
\smallskip

\item $\mg_{4.5}^{p,-p-1}$: the corresponding simply connected Lie groups $G_{4.5}^{p,-p-1}$ admit lattices for some values of the parameter $p\in[-\frac12,0)$, according to \cite[Theorem 6.2 and Table A.1]{Bo16}. However, we can rule out the value $p=-\frac12$. Indeed, for this value 
the eigenvalues of $\ad_b|_\mk$ are $1$, $-\frac12$ and $-\frac12$, so by Corollary \ref{cor:double}, the simply connected Lie group $G_{4.5}^{-\frac12,-\frac12}$ has no lattices.

\item $\mg_{4.6}^{-2p,p}$: it follows from \cite[\S 3.2.2]{AO} that for some values of $p$ the corresponding simply connected Lie group $G_{4.6}^{-2p,p}$ has lattices.

\end{itemize}

\textbf{Case 3.} $\dim \mg=5$. We start by studying the decomposable Lie algebras appearing in Propositions \ref{pro:dim5-3}, \ref{pro:dim5-2} and \ref{pro:dim5-1}:

\begin{itemize}
    \item  $\mathfrak{e}(1,1)\oplus\R^2$: clearly $E(1,1)\times \R^2$ admits lattices, since $E(1,1)$ does. 
    \item $\mg_{4.2}^{-2}\oplus \R$: the corresponding simply connected Lie group admits no lattices due to \cite[Theorem 7.1.1]{Bo16}. 
    \item $\mg_{4.5}^{p,-p-1}\oplus \R$: $G_{4.5}^{p,-p-1}\times \R$ admit lattices for some values $p$, because $G_{4.5}^{p,-p-1}$ do, as explained above. 
    Note however that for $p=-\frac12$,
    the eigenvalues of $\ad_b|_\mk$ are $0$, $1$, $-\frac12$ and $-\frac12$, so by Corollary \ref{cor:double}, the simply connected Lie group $G_{4.5}^{-\frac12,-\frac12}\times \R$ has no lattices.
    \item $\mg_{4.6}^{-2p,p}\oplus \R$: $G_{4.6}^{-2p,p}\times \R$ admit lattices for some values of $p$, since $G_{4.6}^{-2p,p}$ have lattices \cite[\S 3.2.2]{AO}.
\end{itemize}

Finally, we treat the indecomposable Lie algebras listed in Table \ref{table:dim5-1}:
\begin{itemize}
    \item $\mg_{5.7}^{p,q,r}$: The  corresponding simply connected Lie groups $G_{5.7}^{p,q,r}$ admit lattices for some values of the parameters (see \cite[Theorem 7.2.1]{Bo16}). However, we can be more precise for some particular values of the parameters.
    
    In fact, these Lie algebras are almost abelian, and the action on the abelian ideal is given by a diagonal matrix $A$ with eigenvalues $p,q,r,-1$, where $pqr\neq 0$, $p+q+r=1$ and $-1\leq p\leq q\leq r\leq 1$. Hence if $p=q$ or $q=r<1$, then all roots of the characteristic polynomial of $A$ are real, and only one of them is multiple. Therefore, in these cases, the corresponding simply connected Lie groups $G_{5.7}^{p,q,r}$ have no lattices due to Corollary \ref{cor:double}. Notice that for $r=q=1$ we have $p=-1$ and  $G_{5.7}^{-1,1,1}$ has lattices by \cite[Theorem 7.2.1 (iii)]{Bo16}.
    
    \item $\mg_{5.8}^{-1}$: the Lie group $G_{5.8}^{-1}$ admits lattices, due to \cite[Theorem 7.2.2]{Bo16}.
    \item $\mg_{5.9}^{p,-2-p}$: the Lie groups $G_{5.9}^{p,-2-p}$ do not admit lattices (see \cite[Theorem 7.2.3]{Bo16}).
    \item $\mg_{5.11}^{-3}$: the Lie group $G_{5.11}^{-3}$ does not admit lattices (see \cite[Theorem 7.2.4]{Bo16}).
    \item $\mg_{5.13}^{-1-2q,q,r}$: for any $r> 0$, Proposition \ref{pro:dim5-3} shows that the Lie groups $G_{5.13}^{-\frac13,-\frac13,r}$ admit LCP structures with flat factor of dimension 3, so by Corollary \ref{cor:dim=n-2}, they have no lattices.
    However, for $q\neq -\frac13$, there are some values of the parameters $q,r$ for which $G_{5.13}^{-1-2q,q,r}$ admit lattices by \cite[Proposition 7.2.5]{Bo16}.
    \item $\mg_{5.16}^{-1,q}$: the Lie groups $G_{5.16}^{-1,q}$  do not admit lattices (see \cite[Theorem 7.2.10]{Bo16}).
    \item $\mg_{5.17}^{p,-p,r}$: $G_{5.17}^{p,-p,r}$ admit lattices for some values of $p\neq 0$, as shown in \cite[Theorem 7.2.12]{Bo16}.
    \item $\mg_{5.19}^{p,-2p-2}, \mg_{5.23}^{-4}$ and $\mg_{5.25}^{p,4p}$:
    the associated Lie groups $G_{5.19}^{p,-2p-2}$, $G_{5.23}^{-4}$ and $G_{5.25}^{p,4p}$ do not have lattices, according to \cite[Theorem 7.2.16]{Bo16}.
    \item $\mg_{5.33}^{-1,-1}$ and $\mg_{5.35}^{-2,0}$: it follows from \cite[Propositions 7.2.20-21]{Bo16} that the associated simply connected Lie groups $G_{5.33}^{-1,-1}$ and $G_{5.35}^{-2,0}$ admit lattices.
\end{itemize}
\medskip

\appendix
\section{Tables of low-dimensional LCP Lie algebras}

This appendix contains the structure constants of the unimodular solvable LCP Lie algebras up to dimension 5. 
In each table, the 3rd column gives the possible dimensions of the flat factor of an LCP structure, as determined in Section \ref{sec:classification}. The 4th column says whether the corresponding simply connected Lie group has lattices or not. Notice that when the Lie algebra belongs to a family, it is in general not possible to determine explicitly the set of parameters for which the corresponding simply connected Lie groups admit lattices (except when one can show that none of them does).
\medskip

\begin{table}[h] 
\begin{tabular}{|c|c|c|c|} \hline
Lie algebra & Brackets & $\dim \mmu$  &Lattices \\ \hline 
$\mathfrak{e}(1,1)$ & $ [e_1,e_2]=e_2,\, [e_1,e_3]=-e_3$ & 1 &yes  \\ \hline
\end{tabular} 
\vskip 2mm
\caption{$3$-dimensional LCP Lie algebra}
\label{table:dim3}
\end{table}
\vspace{-0.3cm}

\begin{table}[htb] 
\begin{tabular}{|c|c|c|c|} \hline
Lie algebra & Brackets & $\dim\mmu$ & Lattices  \\ 
\hline
$\mathfrak{e}(1,1)\oplus \R$ & $ [e_1,e_2]=e_2,\, [e_1,e_3]=-e_3$ & 1& yes \\
\hline 
$\mg_{4.2}^{-2}$
& $[e_1,e_2]=e_2, \, [e_1,e_3]=e_2+ e_3,$ & 1& no \\ 
&$[e_1,e_4]=-2e_4$&& \\
\hline 
$\mg_{4.5}^{p,-p-1}$
& $[e_1,e_2]=e_2, \, [e_1,e_3]=pe_3, $  & 1 or &for some $p\neq -\frac12$\\ 
&$ [e_1,e_4]=-(p+1)e_4$, $-\frac12\leq p<0$&2 for $p=-\frac12$  &\\
\hline
$\mg_{4.6}^{-2p,p}$ 
& $[e_1,e_2]=-2p e_2, \, [e_1, e_3]=pe_3-e_4,$ & 1 or 2 &for some $p$ \\ 
&  $[e_1,e_4]=e_3+pe_4,$ $p>0$&&\\ \hline
\end{tabular} 
\vskip 2mm
\caption{$4$-dimensional LCP Lie algebras}
\label{table:dim4}
\end{table}

\begin{table}[htb] 
\begin{tabular}{|c|c|c|c|} \hline
Lie& Brackets & $\dim\mmu$ & Lattices\\ 
 algebra &  &  & \\ 
 \hline
$\mathfrak{e}(1,1)\oplus \R^2$ & $ [e_1,e_2]=e_2,\, [e_1,e_3]=-e_3$ & 1& yes \\
\hline 
$\mg_{4.2}^{-2}\oplus \R$
& $[e_1,e_2]=e_2, \, [e_1,e_3]=e_2+ e_3,$ & 1& no \\ 
&$[e_1,e_4]=-2e_4$&& \\
\hline 
$\mg_{4.5}^{p,-p-1}\oplus \R$
& $[e_1,e_2]=e_2, \, [e_1,e_3]=pe_3, $  & 1 or &for some \\ 
&$ [e_1,e_4]=-(p+1)e_4$, $-\frac12\leq p<0$&2 for $p=-\frac12$  &$p\neq -\frac12$\\
\hline
$\mg_{4.6}^{-2p,p}\oplus\R$ 
& $[e_1,e_2]=-2p e_2, \, [e_1, e_3]=pe_3-e_4,$ & 1 or 2 &for some $p$ \\ 
&  $[e_1,e_4]=e_3+pe_4,$ $p>0$&&\\ 
\hline
$\mg_{5.7}^{p,q,r}$  
& $[e_5,e_1]=pe_1, [e_5,e_2]=qe_2, [e_5,e_3]=re_3, $ &1 or& for some  \\ 
& $[e_5,e_4]=-e_4$ &2 if  $p\neq r$ and &  $p\neq q\neq r$\\
& $pqr\neq 0, \, p+q+r=1$&$q\in\{p,r\}$ or&or\\
&$-1\leq p\leq q\leq r\leq 1$&3 if $p=q=r$& $q=r=1$\\
\hline 
$\mg_{5.8}^{-1}$ 
& $[e_5,e_1]=e_1, \, [e_5,e_3]=e_2, \, [e_5,e_4]=-e_4$ &1 &yes  \\ 
\hline 
$\mg_{5.9}^{p,-2-p}$ 
& $[e_5,e_1]=pe_1, \, [e_5,e_2]=e_2,\,  [e_5,e_3]=e_2+e_3, $ &1 or &no  \\ 
& $[e_5,e_4]=(-2-p)e_4$, \; $p\geq -1$ &2 if $p=-1$ &\\
\hline 
$\mg_{5.11}^{-3}$ 
& $[e_5,e_1]=e_1, \, [e_5,e_2]=e_1+e_2,, $ &1& no  \\ 
& $[e_5,e_3]=e_2+e_3,\,[e_5,e_4]=-3e_4 $ && \\
\hline 
$\mg_{5.13}^{-1-2q,q,r}$ 
& $[e_5,e_1]=e_1, \, [e_5,e_2]=qe_2-re_3, $ &1 or& for some   \\ 
& $ [e_5,e_3]=re_2+qe_3,\,[e_5,e_4]=(-1-2q)e_4 $&2 or& $q\neq -\frac13,r$ \\
& $r>0$, $q\in[-1,0]$, $q\neq -\frac12$&3 if $q=-\frac13$&\\
\hline 
$\mg_{5.16}^{-1,q}$ 
& $[e_5,e_1]=e_1, \, [e_5,e_2]=e_1+e_2, $ &2& no   \\ 
&$[e_5,e_3]=-e_3-qe_4,\,[e_5,e_4]=qe_3-e_4$,\;$q>0$& &\\
\hline 
$\mg_{5.17}^{p,-p,r}$ 
& $[e_5,e_1]=pe_1-e_2, \, [e_5,e_2]=e_1+pe_2, $ &2 if $p\neq 0$& for some  \\ 
& $ [e_5,e_3]=-pe_3-re_4,\,[e_5,e_4]=re_3-pe_4$&&  $p,r$\\
& $p\geq 0$, $r>0$&&\\
\hline 
$\mg_{5.19}^{p,-2p-2}$
& $[e_1,e_2]=e_3, \, [e_5,e_1]=e_1, [e_5,e_2]=p e_2,$ &1 &no \\ 
& $[e_5,e_3]=(p+1)e_3, \, [e_5,e_4]=-2(p+1)e_4$,& &\\ 
&$p\neq -1$&&\\
\hline 
$\mg_{5.23}^{-4}$ 
& $[e_1, e_2]=e_3,\, [e_5,e_1]=e_1, \, [e_5,e_2]=e_1+e_2, $ &1& no  \\ 
& $[e_5,e_3]=2e_3, \,[e_5,e_4]=-4e_4$ &  &\\
\hline 
$\mg_{5.25}^{p,4p}$  
& $[e_1, e_2]=e_3,\, [e_5,e_1]=pe_1+e_2,  $ &1 & no  \\ 
& $[e_5,e_2]=-e_1+pe_2, \,[e_5,e_3]=2pe_3,$ & &  \\
&$[e_5,e_4]=-4pe_4$, $p>0$&& \\
\hline 
$\mg_{5.33}^{-1,-1}$
& $[e_1,e_2]=e_2,\, [e_1,e_3]=-e_3,$ &1 &yes \\ 
& $ [e_5,e_3]=e_3,\, [e_5,e_4]=-e_4 $& &\\
\hline
$\mg_{5.35}^{-2,0}$ & $[e_1,e_2]=e_3,\, [e_1,e_3]=-e_2,\, [e_5,e_2]=e_2, $ &1 or 2 & yes  \\
& $ [e_5,e_3]=e_3,\, [e_5,e_4]=-2e_4$ &  &\\
\hline
\end{tabular} 
\vskip 2mm
\caption{$5$-dimensional LCP Lie algebras}
\label{table:dim5-1}
\end{table}

\FloatBarrier

\bibliographystyle{plain}
\bibliography{biblio}

\end{document}